\newcommand{\Z}{{\textsf{\textup{Z}}}}
\newtheorem{thm}{Theorem}
\newtheorem{cor}[thm]{Corollary}
\newtheorem{defi}[thm]{Definition}
\newtheorem{rem}[thm]{Remark}
\newtheorem{nota}[thm]{Notation}
\newtheorem{princ}[thm]{Principle}
\newtheorem{ack}[thm]{Acknowledgement}
\newtheorem*{tempo*}{Template}
\newcommand\be{\begin{equation}}
\newcommand\ee{\end{equation}} 
\def\bdefi{\begin{defi}\rm}
\def\edefi{\end{defi}}
\def\bnota{\begin{nota}\rm}
\def\enota{\end{nota}}
\def\FIVE{\Pi_{1}^{1}\text{-\textup{\textsf{CA}}}_{0}}
\def\SIX{\Pi_{2}^{1}\text{-\textsf{\textup{CA}}}_{0}}
\def\SIXK{\Pi_{k}^{1}\text{-\textsf{\textup{CA}}}_{0}^{\omega}}
\def\ZF{\textup{\textsf{ZF}}}
\def\L{\textsf{\textup{L}}}
 \def\r{\mathbb{r}}
\def\RCA{\textup{\textsf{RCA}}}
\def\({\textup{(}}
\def\){\textup{)}}
\def\RCAo{\textup{\textsf{RCA}}_{0}^{\omega}}
\def\ACAo{\textup{\textsf{ACA}}_{0}^{\omega}}
\def\WKL{\textup{\textsf{WKL}}}
\def\bye{\end{document}}
\def\N{{\mathbb  N}}
\def\Q{{\mathbb  Q}}
\def\R{{\mathbb  R}}
\def\SS{\textup{\textsf{S}}}
\def\MUC{\textup{\textsf{MUC}}}
\def\di{\rightarrow}
\def\asa{\leftrightarrow}
\def\ACA{\textup{\textsf{ACA}}}
\def\QFAC{\textup{\textsf{QF-AC}}}
\def\AC{\textup{\textsf{AC}}}
\def\SUBNET{\textup{\textsf{SUBNET}}}
\def\BW{\textup{\textsf{BW}}}
\def\CBN{\textup{\textsf{CB}}\mathbb{N}}
\def\Korg{\textup{\textsf{Korg}}}
\def\cocode{\textup{\textsf{cocode}}}
\def\NIN{\textup{\textsf{NIN}}}
\def\NCC{\textup{\textsf{NCC}}}
\def\NBI{\textup{\textsf{NBI}}}
\def\HBC{\textup{\textsf{HBC}}}
\def\BOOT{\textup{\textsf{BOOT}}}
\def\IND{\textup{\textsf{IND}}}
\def\net{\textup{\textsf{net}}}
\def\lex{\textup{\textsf{lex}}}
\def\BW{\textup{\textsf{BW}}}
\def\fin{\textup{\textsf{fin}}}
\def\CAC{\textup{\textsf{CAC}}}
\def\DCA{\Delta\textup{\textsf{-CA}}}
\def\CAC{\textup{\textsf{CAC}}}
\def\MCT{\textup{\textsf{MCT}}}
\def\eps{\varepsilon}
\def\X{\textup{\textsf{X}}}
\def\ADS{\textup{\textsf{ADS}}}
\def\RT{\textup{\textsf{RT}}}
\def\ECF{\textup{\textsf{ECF}}}
\numberwithin{equation}{section}
\numberwithin{thm}{section}
\begin{document}
\title[Countable sets versus sets that are countable]{Countable sets versus sets that are countable in Reverse Mathematics}
\author{Sam Sanders}
\address{Institute for Philosophy II, RUB Bochum, Germany}
\email{sasander@me.com}
\subjclass[2010]{03B30, 03D65, 03F35}
\keywords{reverse mathematics, countable set, uncountability of $\R$, hierarchies}
\begin{abstract}
The program \emph{Reverse Mathematics} (RM for short) seeks to identify the axioms necessary to prove theorems of ordinary mathematics, usually working in the language of second-order arithmetic $\L_{2}$.
A major theme in RM is therefore the study of structures that are \emph{countable} or can be \emph{approximated} by countable sets.  
Now, countable sets must be represented by \emph{sequences} here, because the higher-order definition of `countable set' involving injections/bijections to $\N$ \emph{cannot} be directly expressed in $\L_{2}$.
Working in Kohlenbach's \emph{higher-order} RM, we investigate various central theorems, e.g.\ those due to K\"onig, Ramsey, Bolzano, Weierstrass, and Borel, in their (often original) formulation involving the definition of `countable set' based on injections/bijections to $\N$.  This study turns out to be closely related to the logical properties of the uncountably of $\R$, recently developed by the author and Dag Normann.   
Now, `being countable' can be expressed by the existence of an injection to $\N$ (Kunen) or the existence of a bijection to $\N$ (Hrbacek-Jech).
The former (and not the latter) choice yields `explosive' theorems, i.e.\ relatively weak statements that become much stronger when combined with discontinuous functionals, even up to $\SIX$.
Nonetheless, replacing `sequence' by `countable set' seriously reduces the first-order strength of these theorems, whatever the notion of `set' used.  
Finally, we obtain `splittings' involving e.g.\ lemmas by K\"onig and theorems from the RM zoo, showing that the latter are `a lot more tame' when formulated with countable sets.
\end{abstract}
\setcounter{page}{0}
\tableofcontents
\thispagestyle{empty}
\newpage

\maketitle
\thispagestyle{empty}
\section{Introduction}\label{intro}
Concepts like `countable subset of $\R$' and `the uncountability of $\R$' involve arbitrary mappings with domain $\R$, and are therefore best studied in a language that has such objects as first-class citizens. 
Obviousness, much more than beauty, is however in the eye of the beholder.  Lest we be misunderstood, we formulate a blanket caveat: all notions (computation, continuity, function, open set, et cetera) used in this paper are to be interpreted via their higher-order definitions, also listed below, \emph{unless explicitly stated otherwise}.    
\subsection{Historical background and motivation}\label{bintro}
In a nutshell, this paper deals with the study of the logical and computational properties of theorems of ordinary\footnote{Simpson describes \emph{ordinary mathematics} in \cite{simpson2}*{I.1} as \emph{that body of mathematics that is prior to or independent of the introduction of abstract set theoretic concepts}.} mathematics formulated using the definition of `countable set' based on injections/bijections to $\N$, in particular when this choice 
results in significant differences compared to the formulation involving sequences.  A more detailed description is in Section \ref{detail}, while we now sketch the (historical) motivation for this paper, based on mathematical household names like Borel, K\"onig, Ramsey, and Cantor, as well as the mathematical mainstream

\smallskip

First of all, the notion of `countable set' can be defined in various ways.  However, it is an empirical observation, witnessed by countless mainstream textbooks, that to show that a set is countable one often \emph{only} 
constructs an injection (or bijection) to $\N$.  
When \emph{given} a countable set, one (additionally) assumes that this set can be \emph{enumerated}, i.e.\ represented by some sequence.  Hence, whatever one's preferred definition of `countable set' may be, implicit in much of mathematical practise is the following most basic principle about countable sets: 
\begin{center}
\emph{a set that can be mapped to $\N$ via an injection \(or bijection\) can be enumerated}.
\end{center}
This basic principle is formalised as $\cocode_{i}$ for $i=0,1$ in Section \ref{bumm}.
We now provide some more historical and conceptual motivation for this study.  

\smallskip

Secondly, Borel formulates the Heine-Borel theorem in \cite{opborrelen2} using \emph{countable collections} of intervals (rather than sequences), i.e.\ the study of countable sets \emph{an sich} has its roots in ordinary mathematics, namely as discussed in the following remark.
\begin{rem}[Borel's Heine-Borel]\label{horgku}\rm
Borel introduces the notion of `countable set' (French: \emph{ensemble d\'enomerable}) via bijections to $\N$ in \cite{opborrelen2}*{p.~6}. 
He then proceeds to explain the provenance of `countable' (French: \emph{d\'enomerable}), namely that the elements of such sets can be enumerated, i.e.\ listed as a sequence.  
In this way, Borel makes use of the principle $\cocode_{1}$ from Section \ref{bumm} which state that certain countable sets can be enumerated.  

\smallskip

Moreover, Borel's formulation of the Heine-Borel theorem in \cite{opborrelen2}*{p.\ 42} involves \emph{une infinit\'e d\'enombrable d'intervalles}, i.e.\ a countable infinity of intervals.
Thus, Borel's proof starts with the following (originally French): 
\begin{quote}
Let us enumerate our intervals, one after the other, according to whatever law, but determined. \cite{opborrelen2}*{p.\ 42}
\end{quote}
This sentence constitutes another use of the aforementioned principle $\cocode_{1}$.
Borel then proceeds with the usual `interval-halving' proof, similar to Cousin in \cite{cousin1}. 
Similar observations can be made for \cite{opborrelen}*{p.\ 51}, where Borel uses language similar to the previous quote. 
\end{rem}
Similar to the previous remark, Ramsey (\cite{keihardrammen}*{p.\ 264}) and K\"onig (\cite{koning147}) used set theoretic jargon to formulate their eponymous theorem and lemma.  K\"onig even explicitly studies countable sets in \cite{koning147}, while Ramsey only mentions the distinction between the finite and infinite.  
All these are well-studied in \emph{Reverse Mathematics} (RM hereafter; see Section~\ref{prelim1}) with countable sets formulated \emph{using sequences}, and it is therefore a natural question what happens if we work with countable sets \emph{involving injections or bijections to $\N$} instead.  In this paper, we provide a partial answer to this question, which constitutes a contribution to Kohlenbach's \emph{higher-order} RM (see Section \ref{prelim1}).
We note that the (second-order) concept of `countable set' is introduced in \cite{simpson2}*{V.4.2} and used throughout \cite{simpson2}.

\smallskip

Thirdly, more historical motivation is provided by \emph{the uncountability of $\R$} which has an elegant formulation in terms of countable sets based on Cantor's theorem, as follows.  Now, Cantor's \emph{first} set theory paper \cite{cantor1}, published in 1874 and boasting its own Wikipedia page (\cite{wica}), establishes the uncountability of $\R$ as a corollary to:  
\begin{center}
\emph{for any sequence of reals, there is another real different from all reals in the sequence}.  
\end{center}
The logical and computational properties of this theorem, called \emph{Cantor's theorem}, are well-known: it is provable in weak and constructive systems (\cite{simpson2}*{II.4.9} and \cite{bish1}*{p.\ 25}) while there is an efficient computer program 
that computes the real in the conclusion from the sequence (\cite{grayk}).  By contrast, the uncountability of $\R$ has only recently been studied in detail (\cite{dagsamX}) in the guise of the following principles:
\begin{itemize}
\item $\NIN$: \emph{there is no injection from $[0,1]$ to $\N$},
\item  $\NBI$: \emph{there is no bijection from $[0,1]$ to $\N$},
\end{itemize}
Interpreting `countable set' as `there is an injection to $\N$' (see Definitions~\ref{openset} and~\ref{standard}), $\NIN$ is equivalent to the following reformulation of Cantor's theorem:
\be\tag{\textsf{A}}\label{hong}
\textup{\emph{for countable $A\subset \R$, there is a real $y\in [0,1]$ different from all reals in $A$}.}
\ee
Moreover, $\NIN$ follows from Cantor's \cite{cantorb}*{\S16, Theorem A*)} restricted to $\R$:
\be\tag{\textsf{B}}\label{hong2}
\textup{\emph{If a subset $A\subset [0,1]$ is countable, then $A$ cannot be perfect}}.
\ee
A second-order version of \eqref{hong2} is provable in the base theory of RM (\cite{simpson2}*{II.5.9}).
The same results hold for bijections and $\NBI$ \emph{mutatis mutandis} by Theorem \ref{momo}.  In this light, the study of countable sets has its roots (implicitly and explicitly) in the work of Borel and Cantor (and others), and their (semi-)eponymous theorems.

\smallskip

Fourth, we provide some motivation based on mathematical logic.  While \emph{prima facie} quite similar, Cantor's theorem and \eqref{hong}, have \emph{hugely} different logical and computational properties.   
Indeed, as noted above, there are proofs of Cantor's theorem in weak and constructive systems, while the real claimed to exist can be computed efficiently.  
By contrast, $\NIN$ and $\NBI$ are \emph{hard to prove}\footnote{The \textbf{third-order} statements $\NIN$ and $\NBI$ are not provable in $\Z_{2}^{\omega}$, 
a conservative and essentially \textbf{third-order} extension of second-order arithmetic $\Z_{2}$ going back to Sieg-Feferman (\cite{boekskeopendoen}*{p.~129}), i.e.\ we can say that $\NIN$ and \eqref{hong} are \emph{hard to prove}.
The reals claimed to exist by $\NIN$ are similarly \emph{hard to compute}, in the sense of Kleene's S1-S9.  All these results are proved in \cite{dagsamX}*{\S3-4}, while the associated definitions may (also) be found in Section \ref{HCT} of this paper.} in that full second-order arithmetic comes to the fore.  
The real $y$ from \eqref{hong} is similarly hard to compute in terms of the data, within Kleene's higher-order framework.  
Since \eqref{hong} is so elementary, these observations suggest that theorems about countable sets have rather extreme/interesting logical and computational properties.

\smallskip

Fifth, the aforementioned properties of $\NIN$ only serve to motivate the goal of this paper, namely the study of theorems of ordinary mathematics formulated using the definition of `countable set' involving injections/bijections to $\N$, in particular when this choice 
results in significant differences compared to the formulation involving sequences.  The correct (or at least broader) interpretation of the logical and computational properties of $\NIN$ and $\NBI$ may be found in \cite{dagsamX}*{\S1-2}.
We will say that, by \cite{dagsamX}*{Theorem 3.2}, the principles $\NIN$ and $\NBI$ are provable \emph{without using the Axiom of Choice} while the same holds for the theorems considered in this paper, by the below.  

\smallskip

Finally, this paper (clearly) constitutes a spin-off from \cite{dagsamX}, which is part of our joint project with Dag Normann on the logical and computational properties of the uncountable. 
The interested reader may consult \cite{dagsamIII, dagsamX} as an introduction.  

\subsection{Countable infinity and Reverse Mathematics}\label{detail}
We discuss the role of the countable in (Reverse) Mathematics and formulate four questions (Q0)-(Q3) that will be given (partial) answers in this paper. 

\smallskip

Now, the word `countable' and variations occur hundreds of times in Simpson's excellent monograph \cite{simpson2}, the unofficial bible of RM, and in \cite{simpson1}, a sequel consisting of RM papers from 2001.
The tally for Hirschfeldt's monograph \cite{dsliceke} on the RM zoo (see \cite{damirzoo} for the latter) is about one hundred.  
Countable infinity does indeed take centre stage, as is clear from the following quotes by Hirschfeldt and Simpson.
\begin{quote}
We finish this section with an important remark: The approaches to analyzing the strength of theorems we will discuss here are tied to the countably infinite. (\cite{dsliceke}*{p.\ 6})
\end{quote}
\begin{quote}
Since in ordinary mathematics the objects studied are almost always
countable or separable, it would seem appropriate to consider a language
in which countable objects occupy center stage. (\cite{simpson2}*{p.~2})
\end{quote}
As detailed in Section \ref{prelim1}, RM studies ordinary mathematics and does so using the language of second-order arithmetic $\L_{2}$.  
Now, the definition of `countable subset of $\R$' based on injections/bijections to $\N$ cannot be expressed in $\L_{2}$.  Indeed, countable
sets are given by \emph{sequences} in RM, namely as in \cite{simpson2}*{V.4.2}. 
It is therefore a natural question what happens to the results of RM if we use the definition of countable set involving injections or bijections to $\N$ (see Definition~\ref{standard}), say working in Kohlenbach's \emph{higher-order} RM (see Section \ref{prelim1}).
This paper is devoted to the study of this question, as sketched in the rest of this section, right after the following quote providing ample motivation for this study.  
\begin{quote}
This situation has prompted some authors, for example
Bishop/Bridges \cite{bridge1}*{p.\ 38}, to build a modulus of uniform continuity into their definitions of continuous function.
Such a procedure may be appropriate for
Bishop since his goal is to replace ordinary mathematical theorems by
their ``constructive'' counterparts.
However, as explained in chapter I, our goal is quite different.
Namely, we seek to draw out the set existence
assumptions which are implicit in the ordinary mathematical theorems \emph{as they stand}. (\cite{simpson2}*{p.\ 137})
\end{quote}
As noted above, Borel (\cite{opborrelen2}) and K\"onig (\cite{koning147}), formulate their (semi-) eponymous theorems, namely the \emph{Heine-Borel theorem} and \emph{K\"onig's \(infinity\) lemma}, using the definition of countable sets based on bijections to $\N$, while Ramsey formulates \emph{Ramsey's theorem} in \cite{keihardrammen}*{p.\ 264} in set theory lingo.  Motivated by Simpson's above quote, we will study versions of the aforementioned theorems formulated using the definition of `countable set' based on injections/bijections to $\N$.

\smallskip

Now, there are (at least) two possible definitions of `countable set', namely based on \emph{injections to $\N$} (Kunen \cite{kunen}) and \emph{bijections to $\N$} (Hrbacek-Jech \cite{hrbacekjech}).  
Hereafter, we shall \textbf{always} refer to the former as `countable' and to the latter as `strongly countable', as laid down in Definition \ref{standard}.  This naming is merely a matter of convenience: we do not claim that injections or bijections to $\N$ constitute the `standard' or `mainstream' definition of countable set.  We discuss this and related topics in Section \ref{bauer}, where we also discuss the grand(er) scheme of things. 

\smallskip

On a related note, we have studied the properties of \emph{nets} in \cite{samcie19, samwollic19,samnetspilot}.  Since nets are the generalisation of sequences to (possibly) uncountable index sets, it is a natural question whether there is a difference between nets with countable index sets on one hand, and sequences on the other hand.  
As noted in \cite{boermet}, Dieudonn\'e formulates a (rather abstract) theorem in \cite{godknows} that is \emph{true} for sequences but \emph{false} for nets indexed by countable index sets.

\smallskip
\noindent
Thus, we are led to the following motivating questions.
\begin{enumerate}
\item[(Q0)] Does replacing `sequence' by `countable set' make a (big) difference? 
\item[(Q1)] Does replacing `countable' by `strongly countable' make a (big) difference? 
\item[(Q2)] Are there (elementary) differences between sequences and nets with countable index sets?
\item[(Q3)] Are there natural splittings\footnote{A relatively rare phenomenon in second-order RM is when a natural theorem $A$ can be `split' as follows into two (somewhat) natural components $B$ and $C$ (say over $\RCA_{0}$):  $A\asa [B\wedge C] $.  As explored in \cite{samsplit}, there is a plethora of splittings to be found in \emph{third-order} RM.\label{lak}} involving `countable' and `strongly countable'?
\end{enumerate}
Regarding (Q0), we exhibit numerous theorems for which the logical strength changes (sometimes dramatically) upon replacing `sequence' by `countable set'.  
This includes theorems by Borel, Ramsey, and K\"onig that were \emph{originally} formulated using `countable set' (rather than `sequence') or at least in set theory lingo.   
Moreover, we identify a number of theorems about countable sets that give rise to $\SIX$ when combined with higher-order $\FIVE$, i.e.\ the Suslin functional.   
The first such example, namely the Bolzano-Weierstrass theorem for countable sets in $2^{\N}$, was identified in \cite{dagsamX}, and we obtain a number of interesting equivalences in Section \ref{LP}.
According to Rathjen (\cite{rathjenICM}), $\SIX$ \emph{dwarfs} $\FIVE$ 
and Martin-L\"of talks of a \emph{chasm} and \emph{abyss} between these two systems in \cite{loefenlei}. 

\smallskip

Regarding (Q1), the previous paragraph establishes that basic well-known theorems from second-order RM formulated with Kunen's notion of countable are \emph{explosive}, i.e.\
become much stronger when combined with discontinuous (comprehension) functionals.  
These theorems are not provable in $\Z_{2}^{\omega}$, while using the Hrbacek-Jech notion of countable set, the same theorems are \emph{no longer} explosive, while still not provable in $\Z_{2}^{\omega}$.
All these theorems are provable in $\Z_{2}^{\Omega}$; $\Z_{2}^{\omega}$ and $\Z_{2}^{\Omega}$ are the conservative extensions of $\Z_{2}$ with third and fourth-order comprehension from Section~\ref{HCT}.

\smallskip

As to (Q2), nets with countable index sets give rise to explosive (convergence) theorems by Theorem \ref{angel}, i.e.\ the power of nets does not (fully) depend on the cardinality of the index set. 
In particular, the aforementioned convergence theorems exist at the level of $\RCA_{0}$, but yield $\SIX$ when combined with higher-order $\FIVE$, i.e.\ the Suslin functional.
Nets with \emph{uncountable} index sets, namely $\N^{\N}$, are \emph{similarly} explosive when combined with the Suslin functional by \cite{samph}*{\S3}.  
Hence, nets with countable index sets can behave \emph{quite differently} compared to sequences.  

\smallskip

Regarding (Q3), we obtain some nice \emph{splittings} involving various theorems as follows: convergence theorems for nets with (strongly) countable index sets, lemmas by K\"onig (\cite{koning147}), theorems from the RM zoo (\cite{damirzoo}), and various coding principles that connect countable and strongly countable sets and/or sequences, including the Cantor-Bernstein theorem for $\N$.  As noted in Footnote \ref{lak}, a `splitting' is an equivalence $A\asa [B\wedge C]$ where a natural theorem $A$ can be `spit' into two independent (somewhat) natural parts $B$ and $C$.  
These results suggest that principles from the RM zoo lose their `exceptional' behaviour when formulated with countable sets instead of sequences.  

\section{Preliminaries}
We introduce \emph{Reverse Mathematics} in Section \ref{prelim1}, as well as Kohlebach's generalisation to \emph{higher-order arithmetic}, and the associated base theory $\RCAo$.  
We introduce some essential axioms in Section~\ref{HCT}.  

\subsection{Reverse Mathematics}\label{prelim1}
Reverse Mathematics is a program in the foundations of mathematics initiated around 1975 by Friedman (\cites{fried,fried2}) and developed extensively by Simpson (\cite{simpson2}).  
The aim of RM is to identify the minimal axioms needed to prove theorems of ordinary, i.e.\ non-set theoretical, mathematics. 

\smallskip

We refer to \cite{stillebron} for a basic introduction to RM and to \cite{simpson2, simpson1} for an overview of RM.  We expect familiarity with RM, but do sketch some aspects of Kohlenbach's \emph{higher-order} RM (\cite{kohlenbach2}) essential to this paper, including the base theory $\RCAo$ (Definition \ref{kase}).  
As will become clear, the latter is officially a type theory but can accommodate (enough) set theory via Definitions \ref{openset} and \ref{standard}. 

\smallskip

First of all, in contrast to `classical' RM based on \emph{second-order arithmetic} $\Z_{2}$, higher-order RM uses $\L_{\omega}$, the richer language of \emph{higher-order arithmetic}.  
Indeed, while the former is restricted to natural numbers and sets of natural numbers, higher-order arithmetic can accommodate sets of sets of natural numbers, sets of sets of sets of natural numbers, et cetera.  
To formalise this idea, we introduce the collection of \emph{all finite types} $\mathbf{T}$, defined by the two clauses:
\begin{center}
(i) $0\in \mathbf{T}$   and   (ii)  If $\sigma, \tau\in \mathbf{T}$ then $( \sigma \di \tau) \in \mathbf{T}$,
\end{center}
where $0$ is the type of natural numbers, and $\sigma\di \tau$ is the type of mappings from objects of type $\sigma$ to objects of type $\tau$.
In this way, $1\equiv 0\di 0$ is the type of functions from numbers to numbers, and  $n+1\equiv n\di 0$.  Viewing sets as given by characteristic functions, we note that $\Z_{2}$ only includes objects of type $0$ and $1$.    

\smallskip

Secondly, the language $\L_{\omega}$ includes variables $x^{\rho}, y^{\rho}, z^{\rho},\dots$ of any finite type $\rho\in \mathbf{T}$.  Types may be omitted when they can be inferred from context.  
The constants of $\L_{\omega}$ include the type $0$ objects $0, 1$ and $ <_{0}, +_{0}, \times_{0},=_{0}$  which are intended to have their usual meaning as operations on $\N$.
Equality at higher types is defined in terms of `$=_{0}$' as follows: for any objects $x^{\tau}, y^{\tau}$, we have
\be\label{aparth}
[x=_{\tau}y] \equiv (\forall z_{1}^{\tau_{1}}\dots z_{k}^{\tau_{k}})[xz_{1}\dots z_{k}=_{0}yz_{1}\dots z_{k}],
\ee
if the type $\tau$ is composed as $\tau\equiv(\tau_{1}\di \dots\di \tau_{k}\di 0)$.  
Furthermore, $\L_{\omega}$ also includes the \emph{recursor constant} $\mathbf{R}_{\sigma}$ for any $\sigma\in \mathbf{T}$, which allows for iteration on type $\sigma$-objects as in the special case \eqref{special}.  Formulas and terms are defined as usual.  
One obtains the sub-language $\L_{n+2}$ by restricting the above type formation rule to produce only type $n+1$ objects (and related types of similar complexity).        
\bdefi\label{kase} 
The base theory $\RCAo$ consists of the following axioms.
\begin{enumerate}
 \renewcommand{\theenumi}{\alph{enumi}}
\item  Basic axioms expressing that $0, 1, <_{0}, +_{0}, \times_{0}$ form an ordered semi-ring with equality $=_{0}$.
\item Basic axioms defining the well-known $\Pi$ and $\Sigma$ combinators (aka $K$ and $S$ in \cite{avi2}), which allow for the definition of \emph{$\lambda$-abstraction}. 
\item The defining axiom of the recursor constant $\mathbf{R}_{0}$: for $m^{0}$ and $f^{1}$: 
\be\label{special}
\mathbf{R}_{0}(f, m, 0):= m \textup{ and } \mathbf{R}_{0}(f, m, n+1):= f(n, \mathbf{R}_{0}(f, m, n)).
\ee
\item The \emph{axiom of extensionality}: for all $\rho, \tau\in \mathbf{T}$, we have:
\be\label{EXT}\tag{$\textsf{\textup{E}}_{\rho, \tau}$}  
(\forall  x^{\rho},y^{\rho}, \varphi^{\rho\di \tau}) \big[x=_{\rho} y \di \varphi(x)=_{\tau}\varphi(y)   \big].
\ee 
\item The induction axiom for quantifier-free formulas of $\L_{\omega}$.
\item $\QFAC^{1,0}$: the quantifier-free Axiom of Choice as in Definition \ref{QFAC}.
\end{enumerate}
\edefi
\noindent
Note that variables (of any finite type) are allowed in quantifier-free formulas of the language $\L_{\omega}$: only quantifiers are banned.
\bdefi\label{QFAC} The axiom $\QFAC$ consists of the following for all $\sigma, \tau \in \textbf{T}$:
\be\tag{$\QFAC^{\sigma,\tau}$}
(\forall x^{\sigma})(\exists y^{\tau})A(x, y)\di (\exists Y^{\sigma\di \tau})(\forall x^{\sigma})A(x, Y(x)),
\ee
for any quantifier-free formula $A$ in the language of $\L_{\omega}$.
\edefi
As discussed in \cite{kohlenbach2}*{\S2}, $\RCAo$ and $\RCA_{0}$ prove the same sentences `up to language' as the latter is set-based and the former function-based.  Recursion as in \eqref{special} is called \emph{primitive recursion}; the class of functionals obtained from $\mathbf{R}_{\rho}$ for all $\rho \in \mathbf{T}$ is called \emph{G\"odel's system $T$} of all (higher-order) primitive recursive functionals.  

\smallskip

We use the usual notations for natural, rational, and real numbers, and the associated functions, as introduced in \cite{kohlenbach2}*{p.\ 288-289}.  
\begin{defi}[Real numbers and related notions in $\RCAo$]\label{keepintireal}\rm~
\begin{enumerate}
 \renewcommand{\theenumi}{\alph{enumi}}
\item Natural numbers correspond to type zero objects, and we use `$n^{0}$' and `$n\in \N$' interchangeably.  Rational numbers are defined as signed quotients of natural numbers, and `$q\in \Q$' and `$<_{\Q}$' have their usual meaning.    
\item Real numbers are coded by fast-converging Cauchy sequences $q_{(\cdot)}:\N\di \Q$, i.e.\  such that $(\forall n^{0}, i^{0})(|q_{n}-q_{n+i}|<_{\Q} \frac{1}{2^{n}})$.  
We use Kohlenbach's `hat function' from \cite{kohlenbach2}*{p.\ 289} to guarantee that every $q^{1}$ defines a real number.  
\item We write `$x\in \R$' to express that $x^{1}:=(q^{1}_{(\cdot)})$ represents a real as in the previous item and write $[x](k):=q_{k}$ for the $k$-th approximation of $x$.    
\item Two reals $x, y$ represented by $q_{(\cdot)}$ and $r_{(\cdot)}$ are \emph{equal}, denoted $x=_{\R}y$, if $(\forall n^{0})(|q_{n}-r_{n}|\leq {2^{-n+1}})$. Inequality `$<_{\R}$' is defined similarly.  
We sometimes omit the subscript `$\R$' if it is clear from context.           
\item Functions $F:\R\di \R$ are represented by $\Phi^{1\di 1}$ mapping equal reals to equal reals, i.e.\ extensionality as in $(\forall x , y\in \R)(x=_{\R}y\di \Phi(x)=_{\R}\Phi(y))$.\label{EXTEN}
\item The relation `$x\leq_{\tau}y$' is defined as in \eqref{aparth} but with `$\leq_{0}$' instead of `$=_{0}$'.  Binary sequences are denoted `$f^{1}, g^{1}\leq_{1}1$', but also `$f,g\in C$' or `$f, g\in 2^{\N}$'.  Elements of Baire space are given by $f^{1}, g^{1}$, but also denoted `$f, g\in \N^{\N}$'.
\item For a binary sequence $f^{1}$, the associated real in $[0,1]$ is $\r(f):=\sum_{n=0}^{\infty}\frac{f(n)}{2^{n+1}}$.\label{detrippe}
\end{enumerate}
\end{defi}
\noindent
Next, we mention the highly useful $\ECF$-interpretation. 
\begin{rem}[The $\ECF$-interpretation]\label{ECF}\rm
The (rather) technical definition of $\ECF$ may be found in \cite{troelstra1}*{p.\ 138, \S2.6}.
Intuitively, the $\ECF$-interpretation $[A]_{\ECF}$ of a formula $A\in \L_{\omega}$ is just $A$ with all variables 
of type two and higher replaced by type one variables ranging over so-called `associates' or `RM-codes' (see \cite{kohlenbach4}*{\S4} and Remark \ref{frorem}); the latter are (countable) representations of continuous functionals.  
The $\ECF$-interpretation connects $\RCAo$ and $\RCA_{0}$ (see \cite{kohlenbach2}*{Prop.\ 3.1}) in that if $\RCAo$ proves $A$, then $\RCA_{0}$ proves $[A]_{\ECF}$, again `up to language', as $\RCA_{0}$ is 
formulated using sets, and $[A]_{\ECF}$ is formulated using types, i.e.\ using type zero and one objects.  
\end{rem}
In light of the widespread use of codes in RM and the common practise of identifying codes with the objects being coded, it is no exaggeration to refer to $\ECF$ as the \emph{canonical} embedding of higher-order into second-order arithmetic. 
For completeness, we list the following notational convention for finite sequences.  
\begin{nota}[Finite sequences]\label{skim}\rm
We assume a dedicated type for `finite sequences of objects of type $\rho$', namely $\rho^{*}$, which we shall only use for $\rho=0,1$.
Since the usual coding of pairs of numbers goes through in $\RCAo$, we shall not always distinguish between $0$ and $0^{*}$. 
Similarly, we do not always distinguish between `$s^{\rho}$' and `$\langle s^{\rho}\rangle$', where the former is `the object $s$ of type $\rho$', and the latter is `the sequence of type $\rho^{*}$ with only element $s^{\rho}$'.  The empty sequence for the type $\rho^{*}$ is denoted by `$\langle \rangle_{\rho}$', usually with the typing omitted.  

\smallskip

Furthermore, we denote by `$|s|=n$' the length of the finite sequence $s^{\rho^{*}}=\langle s_{0}^{\rho},s_{1}^{\rho},\dots,s_{n-1}^{\rho}\rangle$, where $|\langle\rangle|=0$, i.e.\ the empty sequence has length zero.
For sequences $s^{\rho^{*}}, t^{\rho^{*}}$, we denote by `$s*t$' the concatenation of $s$ and $t$, i.e.\ $(s*t)(i)=s(i)$ for $i<|s|$ and $(s*t)(j)=t(j-|s|)$ for $|s|\leq j< |s|+|t|$. For a sequence $s^{\rho^{*}}$, we define $\overline{s}N:=\langle s(0), s(1), \dots,  s(N-1)\rangle $ for $N^{0}<|s|$.  
For a sequence $\alpha^{0\di \rho}$, we also write $\overline{\alpha}N=\langle \alpha(0), \alpha(1),\dots, \alpha(N-1)\rangle$ for \emph{any} $N^{0}$.  By way of shorthand, 
$(\forall q^{\rho}\in Q^{\rho^{*}})A(q)$ abbreviates $(\forall i^{0}<|Q|)A(Q(i))$, which is (equivalent to) quantifier-free if $A$ is.   
\end{nota}

\subsection{Some axioms of higher-order Reverse Mathematics}\label{HCT}
We introduce some axioms of higher-order RM which will be used below.
In particular, we introduce some functionals which constitute the counterparts of second-order arithmetic $\Z_{2}$, and some of the Big Five systems, in higher-order RM.
We use the `standard' formulation from \cite{kohlenbach2, dagsamIII}.  

\smallskip
\noindent
First of all, $\ACA_{0}$ is readily derived from:
\begin{align}\label{mu}\tag{$\mu^{2}$}
(\exists \mu^{2})(\forall f^{1})\big[ (\exists n)(f(n)=0) \di \big(f(\mu(f))=0&\wedge (\forall i<\mu(f))(f(i)\ne 0) \\
& \wedge  (\forall n)(f(n)\ne0)\di   \mu(f)=0\big)    \big], \notag
\end{align}
and $\ACA_{0}^{\omega}\equiv\RCAo+(\mu^{2})$ proves the same sentences as $\ACA_{0}$ by \cite{hunterphd}*{Theorem~2.5}.   The (unique) functional $\mu^{2}$ in $(\mu^{2})$ is also called \emph{Feferman's $\mu$} (\cite{avi2}), 
and is clearly \emph{discontinuous} at $f=_{1}11\dots$; in fact, $(\mu^{2})$ is equivalent to the existence of $F:\R\di\R$ such that $F(x)=1$ if $x>_{\R}0$, and $0$ otherwise (\cite{kohlenbach2}*{\S3}), and to 
\be\label{muk}\tag{$\exists^{2}$}
(\exists \varphi^{2}\leq_{2}1)(\forall f^{1})\big[(\exists n)(f(n)=0) \asa \varphi(f)=0    \big]. 
\ee
\noindent
Secondly, $\FIVE$ is readily derived from the following sentence:
\be\tag{$\SS^{2}$}
(\exists\SS^{2}\leq_{2}1)(\forall f^{1})\big[  (\exists g^{1})(\forall n^{0})(f(\overline{g}n)=0)\asa \SS(f)=0  \big], 
\ee
and $\FIVE^{\omega}\equiv \RCAo+(\SS^{2})$ proves the same $\Pi_{3}^{1}$-sentences as $\FIVE$ by \cite{yamayamaharehare}*{Theorem 2.2}.   The (unique) functional $\SS^{2}$ in $(\SS^{2})$ is also called \emph{the Suslin functional} (\cite{kohlenbach2}).
By definition, the Suslin functional $\SS^{2}$ can decide whether a $\Sigma_{1}^{1}$-formula as in the left-hand side of $(\SS^{2})$ is true or false.  

\smallskip

We similarly define the functional $\SS_{k}^{2}$ which decides the truth or falsity of $\Sigma_{k}^{1}$-formulas; we also define 
the system $\SIXK$ as $\RCAo+(\SS_{k}^{2})$, where  $(\SS_{k}^{2})$ expresses that $\SS_{k}^{2}$ exists.  Note that we allow formulas with \emph{function} parameters, but \textbf{not} \emph{functionals} here.
In fact, Gandy's \emph{Superjump} (\cite{supergandy}) constitutes a way of extending $\FIVE^{\omega}$ to parameters of type two.  
We identify the functionals $\exists^{2}$ and $\SS_{0}^{2}$ and the systems $\ACAo$ and $\SIXK$ for $k=0$.
We note that the operators $\nu_{n}$ from \cite{boekskeopendoen}*{p.\ 129} are essentially $\SS_{n}^{2}$ strengthened to return a witness (if existant) to the $\Sigma_{n}^{1}$-formula at hand.  

\smallskip

\noindent
Thirdly, full second-order arithmetic $\Z_{2}$ is readily derived from $\cup_{k}\SIXK$, or from:
\be\tag{$\exists^{3}$}
(\exists E^{3}\leq_{3}1)(\forall Y^{2})\big[  (\exists f^{1})(Y(f)=0)\asa E(Y)=0  \big], 
\ee
and we therefore define $\Z_{2}^{\Omega}\equiv \RCAo+(\exists^{3})$ and $\Z_{2}^\omega\equiv \cup_{k}\SIXK$, which are conservative over $\Z_{2}$ by \cite{hunterphd}*{Cor.\ 2.6}. 
Despite this close connection, $\Z_{2}^{\omega}$ and $\Z_{2}^{\Omega}$ can behave quite differently, as discussed in e.g.\ \cite{dagsamIII}*{\S2.2}.   The functional from $(\exists^{3})$ is also called `$\exists^{3}$', and we use the same convention for other functionals.  

\smallskip

Fourth, a number of higher-order axioms are studied in \cite{samph} including the following comprehension axiom (see also Remark \ref{hist}):
\be\tag{$\BOOT$}
(\forall Y^{2})(\exists X\subset \N)\big(\forall n\in \N)(n\in X\asa (\exists f \in \N^{\N})(Y(f, n)=0)\big).
\ee
We mention that this axiom is equivalent to e.g.\ the monotone convergence theorem for nets indexed by Baire space (see \cite{samph}*{\S3}).  
The axiom $\BOOT^{-}$ results from restricting $\BOOT$ to functionals $Y$ such that 
\be\label{uneek}
(\forall n\in \N)(\exists \textup{ at most one } f \in \N^{\N})(Y(f, n)=0).
\ee
The weaker $\BOOT^{-}$ appears prominently in the RM-study of open sets given as (third-order) characteristic functions (\cite{dagsamVII}).
In turn, $\BOOT_{C}^{-}$ is $\BOOT^{-}$ with `$\N^{\N}$' replaced by `$2^{\N}$' everywhere; $\BOOT^{-}_{C}$ was introduced in \cite{dagsamX}*{\S3.1} in the study of the Bolzano-Weierstrass theorem for countable sets in Cantor space.

\smallskip

Another weakening of $\BOOT$ is the following axiom, central to \cite{samFLO2, samrecount}.  
\begin{princ}[$\DCA$]\label{DAAS}
For $i=0, 1$, $Y_{i}^{2}$, and $A_i(n)\equiv (\exists f \in \N^\N)(Y_{i}(f,n)=0)$\textup{:}
\[
(\forall n\in \N)(A_0(n) \asa \neg A_1(n))\di (\exists X\subset \N)(\forall n\in \N)(n\in X\asa A_{0}(n)).
\]
\end{princ}
Finally, we mention some historical remarks about $\BOOT$.
\begin{rem}[Historical notes]\label{hist}\rm
First of all, $\BOOT$ is called the `bootstrap' principle as it is rather weak in isolation (equivalent to $\ACA_{0}$ under $\ECF$, in fact), but becomes much stronger 
when combined with comprehension axioms: $\SIXK+\BOOT$ readily proves $\Pi_{k+1}^{1}\textup{-}\textsf{CA}_{0}$.

\smallskip

Secondly, $\BOOT$ is definable in Hilbert-Bernays' system $H$ from the \emph{Grundlagen der Mathematik} (see \cite{hillebilly2}*{Supplement IV}).  In particular, one uses the functional $\nu$ from \cite{hillebilly2}*{p.\ 479} to define the set $X$ from $\BOOT$.  
In this way, $\BOOT$ and subsystems of second-order arithmetic can be said to `go back' to the \emph{Grundlagen} in equal measure, although such claims may be controversial.  

\smallskip

Thirdly, after the completion of \cite{samph}, it was observed by the author that Feferman's `projection' axiom \textsf{(Proj1)} from \cite{littlefef} is similar to $\BOOT$.  The former is however formulated using sets (and set parameters), which makes it more `explosive' than $\BOOT$ in that full $\Z_{2}$ follows when combined with $(\mu^{2})$, as noted in \cite{littlefef}*{I-12}.
\end{rem}

\section{Reverse Mathematics and countable sets}
\subsection{Introduction}
In this section, we obtain the results about countable sets sketched in Section~\ref{detail}, namely on limit point (and related notions of) compactness (Section \ref{LP}), K\"onig's lemma (Section~\ref{waycool}), and theorems from the RM zoo (Section~\ref{krazy}).  In doing so, we shall provide interesting answers to (Q0)-(Q3) from Section \ref{detail}.   
In Section \ref{introc}, we introduce some necessary definitions and obtain preliminary results; the latter provide motivation for the choice of the former. 

\smallskip

We single out the `explosive' theorems from Section \ref{LP}, i.e.\ the latter become much stronger when combined with discontinuous (comprehension) functionals.  
Our `previous best' was the Lindel\"of lemma for $\N^{\N}$ (see \cite{dagsamV}), which is weak in isolation but yields $\FIVE$ when combined with $(\exists^{2})$.  
In this paper, we identify a number of theorems that yield $\SIX$ when combined with $\FIVE^{\omega}$.
In this light, formalising rather basic mathematics in weak (third-order) systems seems difficult.  

\smallskip

Another important conceptual point is that results like Corollaries \ref{crucrucor} and \ref{surplus} 
establish that 
our results do not really depend on the exact notion of set used in this paper.  In fact, these corollaries show that the `logical power' (or lack thereof) of theorems about countable sets derives from the existence of injections (or bijections), and not from the particular notion of set used.  
Following the title of \cite{dsliceke}, our results can be said to be slicing the \emph{whole} truth, and nothing but the truth.  

\smallskip

Finally, we should mention the `excluded middle trick' pioneered in \cite{dagsamV}, as well as two related results by Kohlenbach from \cite{kohlenbach2, kohlenbach4} essential to this paper.
\begin{rem}[Two continuity results]\label{frorem}\rm
First of all, combining \cite{kohlenbach2}*{Prop.\ 3.7 and~3.12}, the following are equivalent to $(\exists^{2})$ over $\RCAo$:
\begin{itemize}
\item there is a function $f:\R\di \R$ that is not everywhere continuous, 
\item there is a function $g:\N^{\N}\di \N^{\N}$ that is not everywhere continuous,
\end{itemize}
where the usual `$\epsilon$-$\delta$-definition' of continuity is used.  
In particular, we note that $\neg(\exists^{2})$ is equivalent to `all functions on $\R$ (or $\N^{\N}$) are continuous', i.e.\ a version of Brouwer's (in)famous continuity theorem, over $\RCAo$.

\smallskip

Secondly, the standard `$\epsilon$-$\delta$-definition' of continuity on $\N^{\N}$ coincides with the second-order definition involving `RM-codes' from \cite{simpson2}*{II.6.1} in a weak system, as follows.   
Indeed, by combining \cite{kohlenbach4}*{Prop.\ 4.4 and 4.10}, $\RCAo+\WKL$ proves: 
\begin{center}
if $F:\N^{\N}\di \N$ is $\epsilon$-$\delta$-continuous on $2^{\N}$, then there is an RM-code $\alpha:\N\di \N$ that coincides with $F$, i.e.\ we have $(\forall f\in 2^{\N})(\exists N\in \N)(F(f)=\alpha(\overline{f}N))$.
\end{center}
The final formula is usually described as `$\alpha(f)$ is defined and equals $F(f)$'.  The same result holds \emph{mutatis mutandis} for functions on $\R$ that are continuous on $[0,1]$.   

\end{rem}  
\begin{rem}[The excluded middle trick]\label{LEMke}\rm
The law of excluded middle as in $(\exists^{2})\vee \neg(\exists^{2})$ is quite useful as follows:  suppose we are proving $T\di \NIN$ over $\RCAo$.  
Now, in case $\neg(\exists^{2})$, all functions on $\R$ are continuous by Remark \ref{frorem} and $\NIN$ holds trivially.  Hence, what remains is to establish $T\di \NIN$ 
\emph{in case we have} $(\exists^{2})$.  However, the latter axiom e.g.\ implies $\ACA_{0}$ and can uniformly convert reals to their binary representations.  
In this way, finding a proof in $\RCAo+(\exists^{2})$ is `much easier' than finding a proof in $\RCAo$.
In a nutshell, we may wlog assume $(\exists^{2})$ when proving theorems that are trivial (or readily proved) when all functions (on $\R$ or $\N^{\N }$) are continuous, like $\NIN$.   
A considerable `extension' of this trick is formulated in Remark \ref{flop}.
\end{rem}
\subsection{Countable sets in higher-order Reverse Mathematics}\label{introc}
We introduce our notion of `(strongly) countable set' in $\RCAo$, namely Definitions \ref{openset} and \ref{standard}, and formulate some associated principles, to be studied below.
We also establish preliminary results that justify our choice of the aforementioned definitions.  In particular, Corollary \ref{crucrucor} suggests that our results do not depend on the notion of set used, while Theorem \ref{crucru} shows that one cannot hope to study \emph{in systems below $\ACA_{0}$} third-order injections and bijections to $\N$ for sets in the sense of second-order RM, as defined in \cite{simpson2}*{II.5} or \eqref{morg} in Section \ref{defke} right below.

\subsubsection{Definitions}\label{defke}
In this paper, sets are given as in Definition \ref{openset}, namely via characteristic functions as in \cites{kruisje, dagsamVI, dagsamVII}. In fact, Definition \ref{openset} is taken from \cite{dagsamVII} and is inspired by the RM-definition of open and closed sets (\cite{simpson2}*{II.5.6}), as follows.

\smallskip

As is well-known, in second-order RM, closed sets are the complements of open sets and an open set $U\subset \R$
is represented by some sequence of open balls $\cup_{n\in \N}B(a_{n}, r_{n})$ where $a_{n}, r_{n}$ are rationals (see \cite{simpson2}*{I.4}).  One then writes the following for any $x\in \R$:
\be\label{morg}\tag{$\textsf{O}$}
x\in U \textup{ if and only if } {(\exists n\in \N )( |x-a_{n}|<_{\R} r_{n}  )}.  
\ee
Open and closed sets are well-studied in RM (see e.g.\ \cites{brownphd, browner, browner2}).
By \cite{simpson2}*{II.7.1}, one can effectively convert between RM-open sets and (RM-codes for) continuous characteristic functions.
Thus, a natural extension of \eqref{morg} is to allow \emph{arbitrary} (possibly discontinuous) characteristic functions, namely as follows. 
\bdefi[Sets in $\RCAo$]\label{openset}
We let $Y: \R \di \R$ represent subsets of $\R$ as follows: we write `$x \in Y$' for `$Y(x)>_{\R}0$' and call a set $Y\subseteq \R$ Ôopen' if for every $x \in Y$, there is an open ball $B(x, r) \subset Y$ with $r^{0}>0$.  
A set $Y$ is called `closed' if the complement, denoted $Y^{c}=\{x\in \R: x\not \in Y \}$, is open. 
\edefi
\noindent
Note that for open $Y$ as in the previous definition, the formula `$x\in Y$', which is `$Y(x)>_{\R}0$', has the same complexity (modulo higher types) as \eqref{morg}, while given $(\exists^{2})$ it becomes a `proper' characteristic function, only taking values `0' and `$1$'.  Hereafter, an `(open) set' refers to Definition \ref{openset}, while `RM-open set' refers to \eqref{morg}.

\smallskip

We shall use the following definition of `countable set' in $\RCAo$. 
\bdefi[Countable subset of $\R$]\label{standard}~
A set $A\subseteq \R$ is \emph{countable} if there exists $Y:\R\di \N$ such that $(\forall x, y\in A)(Y(x)=_{0}Y(y)\di x=_{\R}y)$. 
If $Y:\R\di \N$ is also \emph{surjective}, i.e.\ $(\forall n\in \N)(\exists x\in A)(Y(x)=n)$, we call $A$ \emph{strongly countable}.
\edefi
The first part of Definition \ref{standard} is from Kunen's set theory textbook (\cite{kunen}*{p.~63}) and the second part is taken from Hrbacek-Jech's set theory textbook \cite{hrbacekjech} (where the term `countable' is used instead of `strongly countable').  
For the rest of this paper, `strongly countable' and `countable' shall exclusively refer to Definition \ref{standard}, \emph{except when explicitly stated otherwise}. 

\smallskip

Finally, it behoves us to briefly motivate our choice of definitions, as follows.
\begin{itemize}
\item The RM-definition of closed set and third-order injections/bijections to $\N$ \emph{readily} give rise to $(\exists^{2})$ by Theorem \ref{crucru}, i.e.\ this combination does not allow us to work in systems below $\ACA_{0}$, suggesting the need for Definition \ref{openset}.
\item Some of our results do not really depend on the exact notion of set used by e.g.\ Corollaries \ref{crucrucor} and \ref{surplus}.
In particular, the `logical power' (or lack thereof) of theorems about countable sets derives from the existence of injections (or bijections) to $\N$, and not from the particular notion of set.  
\item Definition \ref{openset} allows us to `split' a given theorem, like item~\eqref{ob123} in Theorem~\ref{angels}, into a well-known second-order part and a third-order `explosive' part without first-order strength, namely as in item \eqref{ob12} in Theorem \ref{angels}.  
\end{itemize}
These results are obtained in the next sections as corollaries to our main results.  

\subsubsection{Basic properties of countable sets}\label{florhi}
It seems obvious that a sequence in $\R$ forms a countable set, but we show in this section that proving this fact requires non-trivial axioms.  The same holds for seemingly trivial statements from topology, namely related to countability axioms and $\N_{\infty}$, which we introduce next.   These results motivate our choice of definitions made in the previous section.   In particular, by Corollary \ref{crucrucor}, our results do not really depend on 
the exact definition of set.  

\smallskip

First of all, the set $\N_{\infty}$ is the \emph{one-point compactification} of $\N$, defined as follows:
\be\label{nek}
\N_{\infty}:=\{ f\in 2^{\N}:  \underline{(\forall n\in \N)(f(n)=1 \di (\forall m<n)(f(m)=1)}   \}.
\ee
We interpret `$f\in \N_{\infty}$' as the underlined $\Pi_{1}^{0}$-formula from \eqref{nek}, i.e.\ $\N_{\infty}$ is an RM-closed set and items \eqref{klonk2} and \eqref{klonk} in Theorem \ref{crucru} express that $\N_{\infty}$ is (strongly) countable.  Thus, Theorem \ref{crucru} shows that one cannot hope to study bijections or injections on second-order RM-sets in systems below $\ACA_{0}$, which provides a modicum of motivation for our definitions in the previous section.  
Our study of $\N_{\infty}$ as in Theorem \ref{crucru} was inspired by the decidability results pertaining to the latter in constructive mathematics from \cite{escardokes}.
 
\smallskip

Secondly, we note that Munkres defines \emph{first-countable} both in terms of countable sets (\cite{munkies}*{p.\ 190}) and sequences (\cite{munkies}*{p.\ 131}).
By contrast, the notion \emph{second-countable} is only defined based on countable sets by Munkres in \cite{munkies}. 
We use the aforementioned standard definitions\footnote{A topological space $X$ is \emph{first-countable} if for any $x \in X$ there is a countable collection $\mathfrak{B}$ of neighbourhoods of $x$ such that each neighbourbood of $x$ contains some $B\in \mathfrak{B}$ (\cite{munkies}*{p.\ 190}).  A topological space $X$ is \emph{second-countable} if there is a countable collection $\mathfrak{U}$ of basic opens sets such that any open subset of can be written as a union of elements of $\mathfrak{U}$ (\cite{munkies}*{p.\ 78}.)} based on countable sets.   Similar to the previous paragraph, Theorem \ref{crucru} provides motivation for our definitions from Section~\ref{defke} and   
a partial answer to (Q0).  
\begin{thm}\label{crucru}
The following are equivalent over $\RCAo$\textup{:}
\begin{enumerate}
\renewcommand{\theenumi}{\roman{enumi}}
\item $(\exists^{2})$,\label{discon}
\item any sequence $(x_{n})_{n\in \N}$ of reals is a countable set,  \label{watte}
\item the unit interval is first-countable, \label{wataru}
\item  $(\exists Y^{2})(\forall f, g \in \N_{\infty})(Y(f)=_{0}Y(g)\di f=_{1}g) $,\label{klonk2}
\item item \eqref{klonk2} where $Y^{2}$ additionally satisfies $ (\forall n^{0})(\exists h\in \N_{\infty})(Y(h)=n)$.\label{klonk}
\end{enumerate}
Over $\RCAo$, $(\exists^{2})$ follows from the statement: \textup{the unit interval is second-countable.}
\end{thm}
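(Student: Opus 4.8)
The plan is to reduce the displayed claim to the equivalences already obtained in the theorem: I would show that, over $\RCAo$, if the unit interval is second-countable then it is first-countable, and then invoke the implication `\eqref{wataru} $\Rightarrow$ \eqref{discon}' to conclude $(\exists^{2})$. It is important that this reduction step be carried out in $\RCAo$ alone, i.e.\ without surreptitiously invoking $(\exists^{2})$; in particular, unlike many arguments in this paper, the excluded-middle trick is \emph{not} needed here.

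For the reduction step, fix a countable basis $\mathcal{B}$ of open subsets of $[0,1]$ witnessing second-countability, together with the injection $Y$ from $\mathcal{B}$ to $\N$ provided by (the evident generalisation of) Definition~\ref{standard}. Given $x\in[0,1]$, form the sub-collection $\mathcal{B}_{x}:=\{B\in\mathcal{B}: x\in B\}$, i.e.\ the basis elements whose characteristic function (in the sense of Definition~\ref{openset}) is positive at $x$. This $\mathcal{B}_{x}$ is a legitimate object of $\RCAo$, obtained by $\lambda$-abstraction, and here is the one point that requires care: since Definition~\ref{openset} permits a set to be presented via the mere positivity of some function rather than via a genuine $\{0,1\}$-valued characteristic function, forming $\mathcal{B}_{x}$ does \emph{not} require deciding the (generally $\Sigma^{0}_{1}$) statement `$x\in B$', and hence does not need $(\exists^{2})$. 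Now $\mathcal{B}_{x}\subseteq\mathcal{B}$, so $Y$ restricted to $\mathcal{B}_{x}$ is still injective, i.e.\ $\mathcal{B}_{x}$ is a countable set; and the basis property of $\mathcal{B}$ — for each open $U$ with $x\in U$ there is $B\in\mathcal{B}$ with $x\in B\subseteq U$ — then says exactly that $\mathcal{B}_{x}$ is a countable neighbourhood basis at $x$. Carrying this out for every $x\in[0,1]$ shows that $[0,1]$ is first-countable.

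Combining the two steps yields $(\exists^{2})$ from the second-countability of $[0,1]$ over $\RCAo$, as required; note this is not expected to upgrade to an equivalence, since one should not be able to prove $[0,1]$ second-countable even granted $(\exists^{2})$ (verifying the basis property for an \emph{arbitrary} third-order open set is a genuinely higher-order, $\Pi^{1}_{1}$-flavoured task). I expect the only real obstacle to be bookkeeping: checking that whatever precise formalisation of `second-countable' is adopted (following Munkres' countable-set definition) indeed permits the restriction $\mathcal{B}\mapsto\mathcal{B}_{x}$ inside $\RCAo$ and states its basis property in the pointwise form used above, which is the natural reading and is equivalent to `$U$ is the union of the basis elements contained in it'. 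No combinatorics beyond what already appears in the proof of `\eqref{wataru} $\Rightarrow$ \eqref{discon}' should be required.
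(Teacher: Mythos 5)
Your proposal only addresses the final sentence of the theorem (that second-countability of $[0,1]$ yields $(\exists^{2})$): the five-way equivalence of items (i)--(v), which is the bulk of the statement and of the paper's proof, is nowhere argued but simply cited as ``already obtained in the theorem''. In particular, the implication (iii)$\di$(i) that your reduction leans on is itself one of the claims to be proved, so as it stands the argument is incomplete (indeed circular if read as a proof of the whole theorem). The paper proves (iii)$\di$(i), and the other reversals, via a continuity argument: if $\neg(\exists^{2})$, then all functions on $\R$ are continuous, so for any countable set both membership and the injection to $\N$ are locally constant, forcing every countable set to be empty, while a local basis (or a basis, or the set generated by a sequence) cannot be empty. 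The remaining equivalences need further, genuinely different ideas that your proposal does not touch: the explicit injection/bijection on $\N_{\infty}$ from $\exists^{2}$, the Escard\'o-style decidability of quantifiers over $\N_{\infty}$ used to get (v)$\di$(i), the $\QFAC^{1,0}$ argument upgrading (iv) to (v), and the construction of rational-interval local bases for (i)$\di$(iii).

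For the one claim you do treat, your route is different from the paper's and essentially sound. The paper handles second-countability directly by the same emptiness argument (a basis for a topology cannot be empty, yet under $\neg(\exists^{2})$ every countable collection is empty), whereas you derive first-countability inside $\RCAo$ by restricting the basis to $\mathcal{B}_{x}$ and then invoke (iii)$\di$(i). That reduction works: restricting the injection preserves injectivity, and your worry about deciding `$x\in B$' can be discharged concretely by taking the characteristic function of $\mathcal{B}_{x}$ to be $Z(B):=\min(W(B),B(x))$, where $W$ is the characteristic function of $\mathcal{B}$, since $\min(a,b)>_{\R}0$ exactly when $a>_{\R}0$ and $b>_{\R}0$; both your argument and the paper's share the caveat that ``countable collection of open sets'' is only defined mutatis mutandis from Definition \ref{standard}, the elements being third-order objects. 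So your packaging of the last claim is a legitimate alternative, but everything else in the statement still needs a proof.
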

\begin{proof}
For the implication $\eqref{discon}\di \eqref{klonk}$, note that $f\in \N_{\infty}$ means that either $f=_{1}00\dots$, $f=_{1}11\dots$, or $f=_{1}\sigma_{k}*00\dots$ for $k\geq 1$, where $\sigma_{k}(i):=1$ for $i<|\sigma_{k}|=k\geq 0$.  
Using $\exists^{2}$ to distinguish between these cases, define $Y(f)$ as respectively $0$, $1$, $k+1$ for $|\sigma|=k>0$.  Clearly, $Y$ satisfies the properties from item \eqref{klonk} as $Y(\sigma_{k}*00\dots)=k+1$ for $k\geq 1$ and $Y(00\dots)=0$ and $Y(11\dots)=1$.

\smallskip

For $\eqref{klonk2}\di\eqref{klonk} \di  \eqref{discon}$, we use the following equivalence from \cite{escardokes}:
\be\label{escard}
(\forall f\in \N_{\infty})(Y(f)>0)\asa Y(\eps(Y))>0,
\ee
where $\eps^{2\di 1}$ is defined as follows: $\eps(Y)(n)$ is $1$ if $Y(\sigma_{k}*00\dots)>0$ for all $k\leq n$, and zero otherwise.
The forward implication in \eqref{escard} is trivial, while the reverse implication follows from the aforementioned case distinction. 
Indeed, assume $Y(\eps(Y))>0$ and note that if $Y(00\dots)=0$, then $\eps(Y)=_{1}00\dots$ by definition, a contradiction.  
Similarly, again assuming $Y(\eps(Y))>0$, if $Y(\sigma_{k}*00\dots)=0$ for $k\geq 1$ the least such number, then $\eps(Y)=_{1}\sigma_{k-1}*00\dots$, which is impossible.  
Indeed, $\eps(Y)=_{1}\sigma_{k-1}*00\dots$ implies $0<Y(\eps(Y))=_{1}Y(\sigma_{k-1}*00\dots)$ by the extensionality of $Y$ and our assumption.  
However, $\eps(Y)=_{1}\sigma_{k-1}*00\dots$ also implies $\eps(Y)(k-1)=0$ and hence $Y(\sigma_{k-1}*00\dots)=0$ by definition.   
Hence, we have that \emph{if} $Y(\eps(Y))>0$, \emph{then} $Y( \sigma_{k}*00\dots)>0$ for any $k\in \N$.  
The consequent of the latter observation also yields that $\eps(Y)=_{1}11\dots$ by the definition of $\eps$.  But then also $Y(11\dots)=Y(\eps(Y))>0$ by the extensionality of $Y$ and our assumption.  
In this way, we have established the left-hand side of \eqref{escard}, which tells us that quantifiers over $\N_{\infty}$ are \emph{decidable}; this is proved in \cite{escardokes} for constructive mathematics, which is all the more impressive.  Now let $Y^{2}$ be as in item \eqref{klonk} and consider:  
\be\label{hyuk}
(\exists n^{0})(f(n)=0)\asa (\exists g\in \N_{\infty})(f(Y(g))=0)\asa f(Y(\eps(\lambda g.f(Y(g)))))=0,
\ee
which immediately provides us with $(\exists^{2})$.  To obtain $\eqref{klonk2}\di\eqref{klonk}$, let $Y^{2}$ be as in the former and consider the following formula:
\be\label{flap}
(\forall f \in \N_{\infty})(Y(f)\ne_{0} Y(11\dots) \di (\exists k\in \N)(f(k)=0)  ),
\ee
which has the right format for applying $\QFAC^{1,0}$ (included in $\RCAo$) as `$f\in \N_{\infty}$' is $\Pi_{1}^{0}$.
The `trick' we use is that the $\Sigma_{1}^{0}$-formula $f\ne_{1}11\dots $ can be replaced by the \emph{decidable} formula $Y(f)\ne_{0}Y(11\dots)$, thanks to our assumptions on $Y$.
Thus, let $G^{2}$ be such that $G(f)=k$ is the least $k\in \N$ such that $f(k)=0$ for $f\in \N_{\infty}$ such that $Y(f)\ne Y(11\dots)$.
Now define $Y_{0}^{2}$ as follows:
\[
Y_{0}(f):=
\begin{cases}
0 & Y(f)=Y(00\dots)\\
1 & Y(f)=Y(11\dots)\\
G(f)+1 & \textup{ otherwise }
\end{cases},
\]
which has the right properties for item \eqref{klonk} by the definition of $G$.
Note that we could also use $(\forall n\in \N)(f(n)>0)\asa Y(\tilde{f})=Y(11\dots)$ to obtain $(\exists^{2})$, where $\tilde{f}(n)=1$ if $(\forall i\leq  n)(f(i)=1)$, and zero otherwise.   Note $\tilde{f}\in \N_{\infty}$ for all $f\in 2^{\N}$.

\smallskip

For the implication \eqref{discon}$\di$\eqref{watte}, let $(x_{n})_{n\in \N}$ be a sequence of reals and define $x\in A$ as $(\exists n\in \N)(x=_{\R}x_{n})$ using $\exists^{2}$.  
Use $\mu^{2}$ to define $Y:\R\di \N$ as follows: $Y(x)$ is the least $n\in \N$ such that $x=_{\R}x_{n-1}$ if such exists, and $0$ otherwise.  
Then $Y$ is an injection on $A$ by definition, and the latter set is therefore countable. 

\smallskip

Next, assume \eqref{watte} and suppose $\neg(\exists^{2})$.  The latter implies that all functions on $\R$ are continuous by Remark \ref{frorem}. 
Now fix some countable set $A\subset \R$, i.e.\ there is $Y:\R\di \N$ that is injective on $A$.  By continuity, if $x_{0}\in A$, then for $y\in \R$ close enough to $x_{0}$, we have $y\in A$ by Definition \ref{openset}. 
Again by continuity, if $Y(x_{0})=n_{0}$ for $x_{0}\in A$, then for $z\in \R$ close enough to $x_{0}\in A$, we have $Y(z)=n_{0}$, a contradiction.  
Hence, $A$ must be empty and the same holds for all countable sets.  However, item \eqref{watte} applied to e.g.\ $(\pi+\frac{1}{2^{n}})_{n\in \N}$ gives rise to a non-empty set, a contradiction.  Hence, we must have $(\exists^{2})$, and $\eqref{discon}\asa \eqref{watte}$ thus follows.

\smallskip

As to $\eqref{discon}\di\eqref{wataru}$, to show that $[0,1]$ is first-countable, one uses $\exists^{2}$ to define for $x\in [0,1]$ the countable set $A_{x}$ consisting of all intervals in $[0,1]$ with rational end points and containing $x$.
For $\eqref{wataru}\di \eqref{discon}$, in the same way as in the previous paragraph, $\neg(\exists^{2})$ leads to a contradiction as the local basis as in the definition of first-countability cannot be empty. 
The final part follows in the same way as in the previous two paragraphs as a basis for a topology cannot be empty. 
\end{proof}
The previous theorem is important as follows: the topology of first-countable spaces can be described in terms of sequences, while other spaces 
require nets for this purpose.  When $(\exists^{2})$ is not available, we are therefore more of less forced to use nets for describing the topology of $[0,1]$, assuming coding is not an option.  The (higher-order) RM-study of nets can be found in \cites{samnetspilot, samcie19, samwollic19}.
Moreover, modulo technical details, the previous theorem suggests we can derive $(\exists^{2})$ from the existence of a countable set in many contexts, 
like the existence of a countable basis for vector spaces; the associated (second-order) RM study is in \cite{simpson2}*{III.4}.  

\smallskip

Next, the equivalence $\eqref{discon}\asa\eqref{watte}$ in Theorem \ref{crucru} is \emph{partly} due to our choice of the notion of set, namely as in Definition \ref{openset}. 
\emph{Nonetheless}, the next corollary shows that $\eqref{watte}\di \eqref{discon}$ goes through over $\RCAo+\WKL$ \emph{no matter what notion of set is used}.  
This `meta-result' is not fully formal, but rigorous all the same. 
\begin{cor}\label{crucrucor}
The implication $\eqref{watte}\di \eqref{discon}$ from the theorem goes through over $\RCAo+\WKL$ regardless of the meaning of `$x\in A$'.
\end{cor}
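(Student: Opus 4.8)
The plan is to re-run the argument for $\eqref{watte}\di\eqref{discon}$ from the proof of the theorem, but reorganised so that every step speaks only about functions $\R\di\R$ and about concrete reals, and never about the membership relation `$x\in A$'. First I would assume $\neg(\exists^{2})$ together with $\WKL$ and aim for a contradiction with $\eqref{watte}$. Under $\neg(\exists^{2})$, every function $\R\di\R$ is continuous by \cite{kohlenbach2}*{\S3}, so in particular every $Y:\R\di\N$ is continuous; combining this with the Heine--Borel theorem for bounded intervals (available from $\WKL$) makes every such $Y$ uniformly continuous on each interval $[-k,k]$, hence --- being $\N$-valued, so that a modulus for $\eps=\tfrac{1}{2}$ forces local constancy and chaining then forces global constancy --- constant on each $[-k,k]$. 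The point to stress is that the resulting statement, `$\neg(\exists^{2})+\WKL$ implies that every $Y:\R\di\N$ is constant on every bounded interval', involves only type-two objects and does not mention any notion of `set' whatsoever.

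Next I would invoke $\eqref{watte}$ for the explicit sequence $(\pi+2^{-n})_{n\in\N}$ of reals. Whatever `$x\in A$' is taken to mean, the set $A$ determined by this sequence must contain each of its terms, i.e.\ $\pi+2^{-n}\in A$ for every $n$; this is the only property of set-membership the argument uses. By $\eqref{watte}$ there is $Y:\R\di\N$ with $(\forall x,y\in A)(Y(x)=_{0}Y(y)\di x=_{\R}y)$; instantiating at the two terms $\pi+2^{0}$ and $\pi+2^{-1}$, both of which lie in $[3,5]$, yields $Y(\pi+1)=_{0}Y(\pi+\tfrac{1}{2})\di \pi+1=_{\R}\pi+\tfrac{1}{2}$. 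The left-hand side holds by the first paragraph while $\pi+1\ne_{\R}\pi+\tfrac{1}{2}$, a contradiction. Hence $(\exists^{2})$, as required.

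The one non-formal ingredient --- and the reason the statement is a `meta-result' rather than an honest theorem --- is the passage from a sequence of reals to `the set it determines', together with the claim that this set contains the terms of the sequence; but any reasonable reading of `set' validates this, so the argument is rigorous all the same. Two remarks. The hypothesis $\WKL$ is a convenience rather than a necessity: one may instead use only pointwise continuity of $Y$ at the accumulation point $\pi$ to get $\delta>0$ with $Y(x)=_{0}Y(\pi)$ whenever $|x-\pi|<\delta$, choose $n_{0}$ with $2^{-n_{0}}<\delta$ (Archimedean property, provable in $\RCAo$), and conclude $Y(\pi+2^{-n_{0}})=_{0}Y(\pi)=_{0}Y(\pi+2^{-n_{0}-1})$ with $\pi+2^{-n_{0}}\ne_{\R}\pi+2^{-n_{0}-1}$, contradicting injectivity of $Y$ on $A$. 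Secondly, the only thing to watch is discipline: each line must stay `set-free', so that no hidden appeal to Definition \ref{openset} --- for instance to the openness of $\{x:Y_{A}(x)>_{\R}0\}$, which was used in the original proof --- slips back in; I expect this bookkeeping to be the (minor) main obstacle.
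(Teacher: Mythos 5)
Your proposal is correct and follows essentially the same route as the paper: assume $\neg(\exists^{2})$, so every $Y:\R\di\N$ is continuous, upgrade to uniform continuity on a bounded interval using $\WKL$, use $\N$-valuedness to force (local) constancy, and contradict injectivity of $Y$ on the set arising from an explicit sequence, using only the fact that the terms of the sequence belong to that set --- which is exactly the single property of `$x\in A$' the paper's proof exploits (the paper bounds the number of elements of $A$ rather than deriving constancy, an inessential difference). One caveat on your middle step: uniform continuity of a \emph{third-order} pointwise continuous $Y$ does not follow by directly applying the Heine--Borel theorem available from $\WKL$, since the covering by continuity-neighbourhoods is not given as a sequence of intervals (uncountable Heine--Borel is far stronger than $\WKL$); the paper instead first obtains an RM-code for $Y$ via (the proof of) \cite{kohlenbach4}*{Prop.\ 4.10} and then uses the second-order fact that $\WKL$ makes coded continuous functions on $[0,1]$ uniformly continuous, and your step should be justified the same way. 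Your closing remark that $\WKL$ is only a convenience is sound: pointwise $\varepsilon$-$\delta$ continuity of $Y$ at the accumulation point of the sequence already produces two distinct terms of the sequence with equal $Y$-value, which is precisely how the proof of Theorem \ref{crucru} itself uses continuity, so the corollary's appeal to $\WKL$ merely streamlines the argument.
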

\begin{proof}
Assume \eqref{watte} from the theorem and suppose $\neg(\exists^{2})$.  The latter implies that all functions on $\R$ are continuous by Remark \ref{frorem}.
Now fix some set $A\subset [0,1]$ (whatever it may be) that is countable, i.e.\ there is $Y:\R\di \N$ that is injective on $A$.  Since $Y$ is continuous on $[0,1]$, it has an RM-code given $\WKL$ following Remark~\ref{frorem}.
By the RM of $\WKL$ in \cite{simpson2}*{IV}, this RM-code, and hence $Y$, is also \emph{uniformly} continuous on $[0,1]$.  
Let $N_{0}\in \N$ be such that $|Y(x)-Y(y)|<1$ for any $x, y\in [0,1]$ with $|x-y|<\frac{1}{2^{N_{0}}}$ and note that $Y$ is must be constant on $[0,1]$.
Hence, whatever `$x\in A$' is, it can only be true for at most one $x$ in $[0,1]$, since $Y$ is an injection.  
However, item \eqref{watte} guarantees that the sequence $(\frac{\pi}{2^{n+2}})_{n\in \N}$ in $[0,1]$ gives rise to a countable set with infinitely many distinct elements, a contradiction. 
\end{proof}
The reader may verify that the below results satisfy `independence' properties similar to Corollary \ref{crucrucor}.  We have obtained such results in e.g.\ Corollary \ref{surplus}.
We discuss the implications of these corollaries in Remark \ref{flop}.

\subsubsection{Basic principles about \(un\)countable sets}\label{bumm}
We introduce some fundamental principles to be studied below.

\smallskip

First of all, the following\footnote{It is a tedious-but-straightforward verification that the below proofs still go through if we replace the equivalence by a forward arrow in $\cocode_{0}$.  One `immediate' example is that $\cocode_{0}\di \BOOT_{C}^{-}$ in the proof of Theorem \ref{angel}.} 
coding principle is central to \cite{dagsamX, dagsamXI} and this paper. 
\begin{princ}[$\cocode_{0}$]
For any non-empty countable set $A\subseteq [0,1]$, there is a sequence $(x_{n})_{n\in \N}$ in $A$ such that $(\forall x\in \R)(x\in A\asa (\exists n\in \N)(x_{n}=_{\R}x))$.
\end{princ}
We let $\fin_{0}$ be $\cocode_{0}$ restricted as follows.  
\begin{princ}[$\fin_{0}$]
Let $A\subset[0,1]$ and $Y:[0,1]\di \N$ be such that the latter is injective \emph{and bounded} on the former.  
Then there is a sequence $(x_{n})_{n\in \N}$ in $A$ such that $(\forall x\in \R)(x\in A\asa (\exists n\in \N)(x_{n}=_{\R}x))$.
\end{princ}
Intuitively speaking, $\fin_{0}$ expresses that finite sets can be listed by sequences.
Moreover, we let $\cocode_{1}$ be $\cocode_{0}$ restricted to \emph{strongly} countable sets. 
As will become clear in Section \ref{LP}, there is a big difference between theorems formulated with `countable' and with `strongly countable' in that the former (but not the latter) are \emph{explosive}, i.e.\ become much 
stronger when combined with a discontinuous (comprehension) functionals, even up to $\SIX$, the current upper bound of RM, to the best of our knowledge.

\smallskip

Secondly,  injections and bijections are connected via the \emph{Cantor-Bernstein} theorem, formulated by Cantor in \cite{cantor3} and first proved by Bernstein (see \cite{opborrelen2}*{p.\ 104}).
Intuitively, if there is an injection from $A$ to $B$ and an injection from $B$ to $A$, then there is a bijection between them. 
We study the restriction to $B=\N$ and $A\subset \R$.  
Note that item \eqref{wr} in $\CBN$ provides the second injection (from $\N$ to $B$) in the usual formulation of the Cantor-Bernstein theorem.  
\begin{princ}[$\CBN$]
For a set $A\subset \R$, the following two conditions:
\begin{enumerate}
\renewcommand{\theenumi}{\roman{enumi}}
\item there is $Y:\R\di \N$ which is injective on $A$, i.e.\ $A$ is \textbf{countable}
\item there is a sequence $(x_{n})_{n\in \N}$ in $A$ consisting of pairwise distinct reals,\label{wr}
\end{enumerate}
imply that there is $Z:\R\di \N$ which is bijective on $A$, i.e.\ $A$ is \textbf{strongly countable}.  
\end{princ}
\noindent
By Theorem \ref{angel}, $\CBN$ and $\cocode_{0}$ are intimately connected, and both are `explosive' by Corollary \ref{BOEM}.
By contrast, $\N_{\infty}$ `trivially' satisfies the Cantor-Bernstein theorem by (the proof of) Theorem \ref{crucru}, which provides another argument against using RM-closed sets rather than Definition \ref{openset}.
As it happens, our study of $\N_{\infty}$ was motivated by the study of the Cantor-Bernstein theorem in constructive (reverse) mathematics via $\N_{\infty}$ in \cite{cblem}. 

\smallskip

Now, the usual formulation of the Cantor-Bernstein theorem involves two injections.  Let $\CBN'$ be $\CBN$ with the `sequence' condition replaced by `there is an injection from $\N$ to $A$', which is perhaps a rather naive formulation of the Cantor-Bernstein theorem.  
The following principle is the main subject of \cite{dagsamIX} and studied in \cite{dagsamX}, and connects these two versions. 
\begin{princ}[$\NCC$]\label{frik}
For $Y^{2}$ and $A(n, m)\equiv (\exists f\in 2^{\N})(Y(f, m, n)=0)$:
\be\label{garf}
(\forall n \in \N)(\exists m \in \N)A(n,m)\di  (\exists g:\N\di \N)(\forall n\in \N)A(n,g(n)). 
\ee
\end{princ}
Intuitively speaking, $\NCC$ is a very weak version of countable choice as in $\QFAC^{0,1}$, with interesting RM and computability properties, as studied in \cite{dagsamIX}.

\smallskip

Finally, while $\R$ and $[0,1]$ are `obviously' not countable, the following principles cannot be proved in $\Z_{2}^{\omega}$ (but $\Z_{2}^{\Omega}$ can prove them), as shown in \cite{dagsamX}*{\S3}.
\begin{princ}[$\NIN$]
For $Y:[0,1]\di \N$, there are $x, y\in [0,1]$ such that $x\ne_{\R} y$ and $Y(x)=_{\N}Y(y)$. 
\end{princ}
\begin{princ}[$\NBI$]
For $Y:[0,1]\di \N$, \textbf{either} there are $x, y\in [0,1]$ such that $x\ne_{\R} y$ and $Y(x)=_{\N}Y(y)$, \textbf{or} there is $N\in \N$ such that $(\forall x\in [0,1])(Y(x)\ne N)$.
\end{princ}
As is clear from \cite{dagsamX}*{Figure 1}, $\NIN$ and $\NBI$ follow from many elementary theorems of ordinary mathematics. 
As it turns out, deriving $\NIN$ (or $\NBI$) often also allows us to derive $\cocode_{0}$ (or $\cocode_{1}$), while the former proofs are conceptually simpler, in general. 
We shall provide examples of this phenomenon in this paper.   

\smallskip

The following theorem connects $\NIN$ with \eqref{hong} and \eqref{hong2} from Section \ref{bintro}.   
Note that the implication $\eqref{hong2}\di \NIN$ can be found in e.g.\ Rudin's textbook \cite{rudin}*{p.\ 41}.
\begin{thm}\label{momo}
The system $\RCAo$ proves $\NIN\asa \eqref{hong}$ and $\eqref{hong2}\di \NIN$.
The same holds for $\NBI$ and strongly countable sets. 
\end{thm}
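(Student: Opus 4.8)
The plan is to verify the two equivalences/implications essentially by unpacking definitions, using that, under Definition~\ref{standard}, `$A\subset\R$ is countable' means precisely the existence of $Y:\R\di\N$ injective on $A$, and that `$x\in A$' is interpreted via the characteristic-function convention of Definition~\ref{openset}. The nontrivial direction will be $\eqref{hong2}\di\NIN$ (and its strong-countable analogue), which requires exhibiting a suitable perfect set; everything else is bookkeeping plus the `excluded middle trick'.

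\textbf{Step 1: $\NIN\asa\eqref{hong}$.} First I would observe that $\eqref{hong}$ is literally the contrapositive-flavoured reformulation of $\NIN$ once `countable' is spelled out. For $\eqref{hong}\di\NIN$: given $Y:[0,1]\di\N$, consider $A:=[0,1]$ together with the (attempted) witness $Y$; if $Y$ were injective on $[0,1]$ then $[0,1]$ would be countable, and $\eqref{hong}$ would hand us $y\in[0,1]$ with $y\notin A=[0,1]$, absurd; hence $Y$ is not injective on $[0,1]$, which is exactly the conclusion of $\NIN$. For the converse $\NIN\di\eqref{hong}$: let $A\subset\R$ be countable via $Y:\R\di\N$. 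Here I would invoke the excluded-middle trick: if $\neg(\exists^{2})$ then all functions on $\R$ are continuous by \cite{kohlenbach2}*{\S3}, and (as in the proof of Theorem~\ref{crucru}) a continuous $Y$ injective on $A$ forces $A$ to be empty, so any $y\in[0,1]$ works. So we may assume $(\exists^{2})$, hence can rescale/translate $A\cap[0,1]$ into, say, $[0,1/2]$ and use $\mu^{2}$ freely. Restricting $Y$ to $[0,1/2]$ gives an injection of $A\cap[0,1/2]$ into $\N$; applying $\NIN$ to a function $[0,1]\di\N$ that agrees with $Y$ on $[0,1/2]$ and is, say, constant on $(1/2,1]$ with a value not in the (countable, hence avoidable after one $\N$-shift) range, one extracts from the failure of injectivity a point $y\in[0,1]$ not $=_{\R}$-equal to any point of $A$. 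The one technical point is aligning the domains $[0,1]$ versus $\R$; this is routine given $(\exists^{2})$.

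\textbf{Step 2: $\eqref{hong2}\di\NIN$.} Again assume $(\exists^{2})$ via the excluded-middle trick (under $\neg(\exists^{2})$, $\NIN$ is trivial since an injection $[0,1]\di\N$ would be continuous, forcing $[0,1]$ to be a one-point set). Given $Y:[0,1]\di\N$, suppose toward a contradiction that $Y$ is injective, so $A:=[0,1]$ is countable in the sense of Definition~\ref{standard}. But $[0,1]$, as a subset of $\R$ coded by Definition~\ref{openset}, is closed and has no isolated points, i.e.\ it is \emph{perfect} in the sense of \cite{simpson2}*{II.5.9} / \eqref{hong2}: for every $x\in[0,1]$ and every $r>0$ there is $y\in[0,1]$ with $0<|x-y|<r$ (this is immediate and needs no choice). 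Then $\eqref{hong2}$ applied to $A=[0,1]$ yields that $[0,1]$ is \emph{not} perfect — contradiction. Hence $Y$ is not injective, which is $\NIN$. The mild subtlety is making sure `$[0,1]$ is perfect' is available in $\RCAo$ with the higher-order notion of closed set of Definition~\ref{openset}; but perfectness of $[0,1]$ is witnessed by the explicit map $x\mapsto$ (a nearby rational in $[0,1]$ distinct from $x$), computable even without $(\exists^{2})$, so this is fine.

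\textbf{Step 3: the strongly countable / $\NBI$ case.} Here I would run the same three arguments with `injective' replaced by `injective \emph{and} surjective onto $\N$' and $\NIN$ replaced by $\NBI$. The only change is that $\NBI$ allows the extra disjunct `$(\exists N)(\forall x\in[0,1])(Y(x)\ne N)$', i.e.\ $Y$ is \emph{not} surjective; so from `$[0,1]$ is not strongly countable' one gets: either $Y$ fails injectivity (first disjunct of $\NBI$) or $Y$ fails surjectivity (second disjunct). The reformulation $\eqref{hong}$ for $\NBI$ and the Cantor-style $\eqref{hong2}$ for $\NBI$ are stated to go through \emph{mutatis mutandis}, and indeed the perfect-set argument is insensitive to surjectivity, so $\eqref{hong2}$-for-strongly-countable applied to $[0,1]$ again contradicts the existence of a bijection $[0,1]\di\N$. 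I expect no new obstacle beyond carefully matching disjuncts.

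\textbf{Main obstacle.} The genuine content is Step~2: one must have, inside $\RCAo$, that the unit interval is a perfect set in the precise sense used by \eqref{hong2}, and that $\eqref{hong2}$ as stated (`$A\subset[0,1]$ countable $\di$ $A$ not perfect') is being applied with $A=[0,1]$ itself. Everything else — the two directions of Step~1 and the $\NBI$ variant — is definitional unwinding plus one application of the excluded-middle trick to dispose of the $\neg(\exists^{2})$ case and one harmless rescaling to reconcile domains $[0,1]$ and $\R$.
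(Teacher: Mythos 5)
Your overall architecture is the same as the paper's: use the excluded-middle trick $(\exists^{2})\vee\neg(\exists^{2})$ to dispose of the case $\neg(\exists^{2})$ (where $\NIN$ and \eqref{hong} hold trivially by the continuity argument from the proof of Theorem \ref{crucru}, and where one also notes that $[0,1]$ only becomes a set in the sense of Definitions \ref{openset}--\ref{standard} once $(\exists^{2})$ is available); treat the equivalence $\NIN\asa\eqref{hong}$ as definition-unwinding; and for $\eqref{hong2}\di\NIN$ give exactly the paper's argument, namely that $\neg\NIN$ makes $[0,1]$ countable and perfect, contradicting \eqref{hong2}. Your Step 3 for $\NBI$ is at the same level of detail as the paper's ``proved in the same way''.

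There is, however, a genuine flaw in your execution of the direction $\NIN\di\eqref{hong}$. The auxiliary function you build -- agreeing with (a shift of) $Y$ on $[0,1/2]$ and constant on $(1/2,1]$ -- is \emph{trivially} non-injective: any two distinct points of $(1/2,1]$ already witness a collision. Hence applying $\NIN$ to it yields no information whatsoever: $\NIN$ is a bare existence statement (not a functional producing witnesses), and for this function its conclusion is provable outright, so there is no way to ``extract'' a point $y\in[0,1]$ avoiding $A$; moreover the collision $\NIN$ asserts may lie entirely in the constant region, and the rescaling does nothing about elements of $A$ outside $[0,1]$. The repair is the straightforward contraposition, which is what the paper means by ``a mere manipulation of definitions'': if no $y\in[0,1]$ is different from all reals in $A$, then by the extensionality of the characteristic function representing $A$ every $y\in[0,1]$ belongs to $A$, so the injection $Y:\R\di\N$ witnessing countability of $A$ restricts to an injection of $[0,1]$ into $\N$, contradicting $\NIN$; no rescaling or domain surgery is needed. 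With that direction replaced (and its bijection analogue for $\NBI$, matching disjuncts as you indicate), your proposal coincides with the paper's proof.
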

\begin{proof}
For the equivalence, $\NIN$ and \eqref{hong} are trivial in case $\neg(\exists^{2})$, by the proof of Theorem \ref{crucru}.
In case $(\exists^{2})$, $[0,1]$ is a set in the sense of Definition \ref{openset}.  
The equivalence is now a mere manipulation of definitions using contraposition. 
The equivalence involving $\NBI$ and \eqref{hong} restricted to strongly countable sets is proved in the same way. 
For $\eqref{hong2}\di \NIN$, 
 $\neg\NIN$ implies that $[0,1]$ is countable and perfect.  
\end{proof}
Note that $\eqref{hong2}\di \NIN$ goes through for any definition of `perfect set' that makes the unit interval a perfect set in $\RCAo$.     
Since $\NIN$ does not mention the notion of `countable set', the equivalence $\NIN\asa \eqref{hong}$ suggests that our notion of set as in Definition \ref{openset} is relatively satisfactory.  

\smallskip

Finally, we discuss the implications of Corollary \ref{crucrucor} for principles concerning countable sets like $\cocode_{0}$. 
\begin{rem}[Countable sets and continuity]\label{flop}\rm
The aim of Section \ref{florhi} was to show that the exact definition of set does not matter much when dealing with countable sets, culminating in Corollary \ref{crucrucor}. 
In this light, we might as well chose the `most convenient' definition of (countable) set, namely Definition \ref{openset}.  Indeed, given $\neg(\exists^{2})$, one readily\footnote{Let $Y:[0,1]\di \R$ be injective on $A\subset[0,1]$ where the latter is represented by $Z:[0,1]\di \N$, i.e.\ $x\in A\asa Z(x)=1$ for $x\in \R$.  Given $\neg(\exists^{2})$, $Z, Y$ are continuous by the above.  Hence, for $x\in A$, we have $y\in A$ for $y\in B(x, \frac{1}{2^{N}})$, for some $N\in \N$.  Moreover, for some $M\in \N$, $Y(z)=Y(x)$ for all $z\in B(x, \frac{1}{2^{M}})$.  Considering $\max(N,M)$, we observe that $Y$ is not injective on $A$.\label{score}} shows that 
\begin{center}
\emph{there are no non-empty countable subsets of $\R$ or $\N^{\N}$}, 
\end{center}
rendering the principles $\cocode_{0}$ and $\NIN$ trivial.  Below, we shall often make use of this observation, which we have delayed until now for didactic reasons. 
The same observation holds for alternative representations of sets, which we will explore in a future publication. 
\end{rem}

\subsection{Limit point versus sequential compactness}\label{LP}
The main topic of this section is Section \ref{dir1}, namely the study of compactness via nets with countable index sets.  
These results give rise to improvements of results on nets with \emph{uncountable index sets} from e.g.\ \cite{samph, samnetspilot}, as explored in Section \ref{dir2}
\subsubsection{Nets with countable index sets}\label{dir1}
In this section, we study limit point compactness and nets \emph{restricted to countable index sets}, with an eye on the questions (Q0)-(Q3) from Section \ref{detail}.

\smallskip

Now, the RM-study of \emph{sequential} compactness is well-known (\cite{simpson2}*{III.2}); the RM-study of \emph{limit-point} compactness\footnote{A space is called \emph{limit point compact} by Munkres in \cite{munkies}*{p.\ 178} if every infinite subset has a limit (=adherence) point.} and compactness \emph{based on nets} is not as well-developed.  In the latter case,  \cite{samnetspilot} constitutes a first step and may be consulted for an introduction to nets in $\RCAo$, including the required definitions, which are of course essentially the standard ones from the literature.  

\smallskip

First of all, let $\BW_{0}^{[0,1]}$ be the following version of the Bolzano-Weierstrass theorem.
We discuss the details of the formulation of this principle in Remark \ref{dagki}.
\begin{princ}[$\BW_{0}^{[0,1]}$]\label{konk}
For countable $A\subset [0,1]$ and $F:[0,1]\di [0,1]$, the supremum $\sup_{x\in A}F(x)$ exists. 
\end{princ}
Using the usual interval-halving technique and working over $\RCAo$, $\ACA_{0}$ is equivalent to the statement that for a \emph{sequence} $(x_{n})_{n\in \N}$ and \textbf{any} $F:[0,1]\di [0,1] $, $\sup_{n\in \N}F(x_{n})$ exists.  
As is clear from Theorem \ref{angel} and its corollaries, the formulation using countable sets as in $\BW_{0}^{[0,1]}$ behaves \emph{quite} differently. 

\smallskip

We have studied the RM of nets in \cite{samnetspilot}, and we refer to the latter for all relevant definitions related to nets in $\RCAo$.  
While nets can have almost arbitrary index sets, Tukey shows in \cite{tukey1} that topology 
can be developed based on \emph{phalanxes}, which are nets with index set consisting of the finite subsets of a given set, ordered by inclusion. 
Thus, let $\MCT_{0}^{\net}$ be the monotone convergence theorem for nets in $[0,1]$ with index set consisting of finite sequences without repetitions, as follows:
\be\label{basik}
D=\{ w^{1^{*}}: (\forall i, j <|w|)(i\ne j\di w(i)\ne w(j))\wedge (\forall i<|w|)(w(i)\in A)  \},
\ee
for countable $A\subset \R$.  We order $D$ by inclusion, i.e.\ `$w\preceq_{D}v$' means that all elements of $w$ are included in $v$. 
We note that e.g.\ \cite{samnetspilot}*{\S4} deals with nets that are not obviously phalanxes, in the context of sequential continuity and compactness.   

\smallskip

Now, the monotone convergence theorem for sequences is equivalent to $\ACA_{0}$ (\cite{simpson2}*{III.2}), i.e.\ the formulation using countable sets 
as in $\MCT_{0}^{\net}$ behaves \emph{quite} differently by the following theorem.
\begin{thm}\label{angel}
The system $\ACAo$ proves that the following are equivalent:
\begin{enumerate}
\renewcommand{\theenumi}{\roman{enumi}}
\item $\BW_{0}^{[0,1]}$: Bolzano-Weierstrass theorem for countable sets in $[0,1]$,\label{ob1}
\item $\MCT_{0}^{\net}$: monotone convergence theorem for nets with countable index sets,\label{ob2}
\item $\BOOT^{-}_{C}$,\label{ob3}
\item $ \cocode_{0}$: a countable set in $[0,1]$ can be listed as a sequence, \label{ob4}
\item $\CBN+\cocode_{1}$.\label{ob5}
%
%

\end{enumerate}
Omitting item \eqref{ob2}, the equivalences go through over $\RCAo$.
\end{thm}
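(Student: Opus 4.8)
The plan is to establish the cycle $\eqref{ob4}\di\eqref{ob1}\di\eqref{ob3}\di\eqref{ob4}$ together with the equivalence $\eqref{ob4}\asa\eqref{ob5}$ over $\RCAo$, and then to attach item~\eqref{ob2} over $\ACAo$ by adding the two implications $\eqref{ob4}\di\eqref{ob2}\di\eqref{ob1}$. Throughout I would use the \emph{excluded middle trick} from \cite{dagsamV}: if $\neg(\exists^{2})$ then every $Y'\colon\R\di\R$ is pointwise continuous, so a set $A\subseteq[0,1]$ as in Definition~\ref{openset} is \emph{open}, while any $Y\colon\R\di\N$ is locally constant and hence cannot be injective on a set with non-empty interior; thus every countable, and \emph{a fortiori} every strongly countable, subset of $[0,1]$ is empty when $\neg(\exists^{2})$, which makes each of \eqref{ob1}--\eqref{ob5} trivially true in that case. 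Consequently, in proving any of the implications below I may assume $(\exists^{2})$, i.e.\ argue inside $\ACAo$.

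The three easy implications I would dispatch first. For $\eqref{ob4}\di\eqref{ob1}$: list the countable set $A$ as a sequence $(a_{n})_{n\in\N}$ via $\cocode_{0}$, whence $\sup_{x\in A}F(x)=\sup_{n}F(a_{n})$ exists by $\ACA_{0}$. For $\eqref{ob4}\asa\eqref{ob5}$: the direction $\cocode_{0}\di\cocode_{1}$ is trivial by restriction; $\cocode_{0}\di\CBN$ follows by listing $A$, forming the range $Y_{0}[A]\subseteq\N$ (arithmetical in the list and the injection $Y_{0}$, hence a set by $\ACA_{0}$), and post-composing $Y_{0}$ with the increasing enumeration of this infinite range to turn the injection into a surjection; and $\CBN+\cocode_{1}\di\cocode_{0}$ by rescaling a countable non-empty $A$ into $[0,\tfrac12]$, adjoining the pairwise distinct reals $\tfrac12+\tfrac1{n+2}$ to obtain a countable superset $B$ that is strongly countable by $\CBN$, listing $B$ via $\cocode_{1}$, and filtering that list by the condition `$\in A$' (decidable given $\exists^{2}$). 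For $\eqref{ob3}\di\eqref{ob4}$: given countable $A\subseteq[0,1]$ with injection $Y_{0}$, apply $\BOOT_{C}^{-}$ once to the functional deciding `$\r(f)\in A$ and $Y_{0}(\r(f))=n$', which meets the uniqueness clause \eqref{uneek} by injectivity of $Y_{0}$ once one restricts to binary expansions not ending in all $1$'s, obtaining $X:=Y_{0}[A]$ as a set; a second application, to `$\r(f)\in A$, $Y_{0}(\r(f))=n$ and $f(k)=b$', recovers the binary expansions of the elements of $A$, and re-indexing along an $\ACA_{0}$-enumeration of $X$ (padding if $A$ is finite) yields the desired sequence.

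The hard part will be $\eqref{ob1}\di\eqref{ob3}$, since a supremum is a single real whereas $\BOOT_{C}^{-}$ must manufacture a subset of $\N$; the device I propose is to arrange a supremum whose ternary digits \emph{are} the characteristic function of the set in question. Given $Y^{2}$ satisfying \eqref{uneek} for `$2^{\N}$', let $A\subseteq[0,\tfrac12]$ collect the reals $\sum_{n}g(n)3^{-(n+1)}$ for those $g\in 2^{\N}$ which, under a fixed interleaving, code a finite sequence $\langle(n_{1},f_{1}),\dots,(n_{k},f_{k})\rangle$ with $n_{1}<\dots<n_{k}$, each $f_{i}\in 2^{\N}$, and $Y(f_{i},n_{i})=0$ throughout. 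Membership in $A$ is decidable given $\exists^{2}$ (finitely many evaluations of $Y$ on the extracted type-$1$ data), and since \eqref{uneek} makes the witness $f_{i}$ for $n_{i}$ unique, the finite set $\{n_{1},\dots,n_{k}\}$ determines the whole code, so its code number injects $A$ into $\N$ and $A$ is countable (and non-empty, containing the code of the empty sequence). I then apply $\BW_{0}^{[0,1]}$ to $A$ and to the map $F$ sending the code of $\langle(n_{i},f_{i})\rangle_{i\le k}$ to $\sum_{i\le k}3^{-(n_{i}+1)}\in[0,\tfrac12]$: because every finite subset of $X:=\{n:(\exists f\in 2^{\N})(Y(f,n)=0)\}$ is witnessed and hence represented in $A$, the resulting supremum is exactly $L:=\sum_{n\in X}3^{-(n+1)}$. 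Since $L$ has only ternary digits in $\{0,1\}$ its base-$3$ expansion is unique, so by primitive recursion (with $\exists^{2}$ deciding the $\Pi_{1}^{0}$ comparisons) one reads off `$n\in X$' from `the $n$-th ternary digit of $L$ is $1$', producing $X$ as a set. I expect the delicate points to be the bookkeeping of the interleaved code and the check that \eqref{uneek} does exactly the work of making $A$ countable and no more---which is precisely why this argument yields $\BOOT_{C}^{-}$ rather than the full $\BOOT_{C}$.

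Finally, to include item~\eqref{ob2} over $\ACAo$: for $\eqref{ob4}\di\eqref{ob2}$, list the countable index-generating set as $(a_{n})_{n}$ via $\cocode_{0}$ and observe that the deduplicated initial segments $\langle a_{0},\dots,a_{k}\rangle$ form a monotone cofinal sequence in the directed set $D$ of \eqref{basik}, so a monotone net on $D$ converges to the limit of the associated monotone sequence, which exists by $\ACA_{0}$; for $\eqref{ob2}\di\eqref{ob1}$, given countable $A\subseteq[0,1]$ and $F\colon[0,1]\di[0,1]$, the assignment $w\mapsto\max_{i<|w|}F(w(i))$ is a monotone net on $D$ bounded in $[0,1]$, and its limit, supplied by $\MCT_{0}^{\net}$, equals $\sup_{x\in A}F(x)$ since the one-element sequences $\langle x\rangle$ for $x\in A$ already realise every value $F(x)$. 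Together these close all the required implications.
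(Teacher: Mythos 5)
Your overall plan is sound and in places more careful than the paper's own sketch: you attach item (ii) via (iv)$\di$(ii)$\di$(i) instead of the paper's (iv)$\di$(ii) and (ii)$\di$(iii), you give a self-contained proof of (iii)$\di$(iv) where the paper cites an earlier result, and your padding of $A$ by the points $\tfrac12+\tfrac1{n+2}$ before invoking $\CBN$ is the honest way to handle possibly finite $A$ in (v)$\di$(iv), which the paper dismisses as trivial. Your ternary device in (i)$\di$(iii) is also a genuine improvement on the paper's route (which works with binary reals $\r(f_{w})$ and so has to worry about dyadic reals with two expansions when reading a set off a supremum), since $\{0,1\}$-ternary expansions are unique.

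There is, however, a genuine gap in your justification that $\sup_{x\in A}F(x)=L$. You argue that ``every finite subset of $X$ is witnessed and hence represented in $A$''; formally this says that for every coded finite $S\subseteq X$ there is a \emph{single} $g\in 2^{\N}$ packaging the (unique) witnesses $f_{i}$ for all $n_{i}\in S$ simultaneously. For a fixed standard $|S|$ this is pure logic, but for variable $|S|$ the obvious proof is an induction on a formula containing an existential quantifier over $2^{\N}$, i.e.\ a form of bounded $\Sigma_{1}^{1}$-choice; such induction (in the spirit of $\IND_{1}$) is \emph{not} available in $\RCAo$ or $\ACAo$, which is precisely why the paper adds $\IND_{1}$ explicitly when it needs it for $\ADS_{0}$. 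Fortunately your argument is repairable without it, because the least-upper-bound property supplies the multi-witness elements for you: writing $s:=\sup_{x\in A}F(x)$ (which lies in the closed set of $\{0,1\}$-ternary reals), suppose $n\in X$ with unique witness $f$ but the $n$-th digit of $s$ is $0$; pick $x\in A$ with $F(x)>s-\tfrac12 3^{-(n+1)}$, note that $n$ cannot lie in the support of $F(x)$ (comparing $\{0,1\}$-ternary expansions, a digit-$n$ disagreement would force either $F(x)>s$ or $s-F(x)\geq \tfrac12 3^{-n}$), and insert the single pair $(n,f)$ into the code of $x$ to get $x'\in A$ with $F(x')=F(x)+3^{-(n+1)}>s$, contradicting that $s$ is an upper bound. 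The converse digit direction only uses that each element of $A$ carries its witnesses inside its own code, so no choice is needed there. With this local repair, which only ever handles one witness at a time, your (i)$\di$(iii) goes through and the rest of the proposal stands.
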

\begin{proof}
First of all, while $(\exists^{2})$ is needed to state \eqref{basik} and hence item \eqref{ob2}, all other items are vacuously true given $\neg(\exists^{2})$, following the penultimate paragraph of the proof of Theorem \ref{crucru}, and Remark \ref{flop}. 
Hence, the final sentence of the theorem involving $\RCAo$ follows from equivalences over $\ACAo$ by considering the law of excluded middle $(\exists^{2})\vee \neg(\exists^{2})$.  
For the rest of the proof, we assume $(\exists^{2})$ and hence $\ACA_{0}$ and attendant RM-results from \cite{simpson2}*{III.2}.
In particular, we may use $\exists^{2}$ to freely convert between reals in $[0,1]$ and their binary representations.  We shall therefore sometimes work over $2^{\N}$ rather than $[0,1]$.
Moreover, elementhood (as in Definition \ref{openset}) is decidable.    

\smallskip

Secondly, $\eqref{ob4}\di \eqref{ob1}$ is immediate as $\cocode_{0}$ converts a countable set in $[0,1]$ into a sequence, which
has a supremum given $\ACA_{0}$ by \cite{simpson2}*{III.2}.  Similarly, $\eqref{ob4}\di \eqref{ob2}$ follows from the observation 
that $\cocode_{0}$ converts a countable index set into a sequence, after which the usual `interval-halving technique' can be done using $\ACA_{0}$.

\smallskip

The implication $\BOOT^{-}\di \cocode_{0}$ is proved in \cite{dagsamX}*{Theorem 3.16}; since $\cocode_{0}$ deals with sets in $[0,1]$, the same proof also yields $\eqref{ob3}\di \eqref{ob4}$ by coding real numbers as binary sequences using $\exists^{2}$.  
For completeness, we provide a sketch as follows.  
Fix $A\subset [0,1]$ and let $Y:[0,1]\di \N$ be an injection on $A$.  
Use $\BOOT^{-}_{C}$ to define $X\subset \N^{2}\times \Q$ such that for all $n, m\in \N$ and $q\in \Q\cap [0,1]$, we have:
\be\label{kilop}\textstyle
(n, m, q)\in X\asa (\exists x\in B(q, \frac{1}{2^{m}})\cap A)(Y(x)= n ).
\ee
By assumption, the following condition required for $\BOOT^{-}_{C}$, is satisfied: 
\[\textstyle
(\forall n, m\in \N, q\in \Q\cap [0,1])(\exists \textup{ at most one } x\in B(q, \frac{1}{2^{m}})\cap A)(Y(x)= n ).
\]
We now use $X$ from \eqref{kilop} and the well-known interval-halving technique to create a sequence $(x_{n})_{n\in \N}$.  
For fixed $n\in \N$, define $[x_{n}](0)$ as $0$ if $(\exists x\in [0, 1/2)\cap A)(Y(x)=n)$, and $1/2$ otherwise; define $[x_{n}](m+1)$ as $[x_{n}](m)$ if $(\exists x\in \big[[x_{n}](m), [x_{n}](m)+\frac{1}{2^{m+1}}\big)\cap A\big)(Y(x)=n)$, and $[x_{n}](m)+\frac{1}{2^{m+1}}$ otherwise.  
By definition, we have: 
\[
(\forall n\in \N)\big[(\exists x\in A)(Y(x)=n)\asa [x_{n}\in A\wedge  Y(x_{n})=n] \big],
\]
which readily yields $\cocode_{0}$ as $\mu^{2}$ can be used to removed $x_{n}$ from the sequence in case $x_{n}\not\in A$.
This concludes the sketch based on the proof of \cite{dagsamX}*{Theorem 3.16}.

\smallskip

Next, we prove $\eqref{ob4}\di \eqref{ob3}$. 
Fix $Z$ such that $(\forall n\in \N)(\exists \textup{ at most one } g\in 2^{\N})(Z(g, n)=0)$ and define the set $A:=\{ g\in 2^{\N}: (\exists n\in \N)(Z(g, n)=0))\}$. 
To show that $A$ is countable, define $Y(g)$ as the least $n\in \N$ such that $Z(g, n)=0$, if such there is, and zero otherwise; by assumption, $Y$ is an injection on $A$ and the latter is therefore countable. 
Let $(h_{m})_{m\in \N}$ be a sequence in $2^{\N}$ such that $g\in A\asa (\exists m\in \N)(g=_{1}h_{m})$ for any $g\in 2^{\N}$.  In this way, we have $(\exists g\in 2^{\N})(Z(g,n )=0)\asa (\exists m\in \N)(Z(h_{m}, n)=0) $ for any $n\in \N$, and $\BOOT^{-}_{C}$ now follows from $\ACA_{0}$. 

\smallskip

We now prove $\eqref{ob2}\di \eqref{ob3}$.  
Recall that $(\exists^{2})$ allows us to freely convert between real numbers and their binary representations, which means we can work over $2^{\N}$ rather than $[0,1]$ without problems.  
Fix $Z$ such that $(\forall n\in \N)(\exists \textup{ at most one } g\in 2^{\N})(Z(g, n)=0)$.
Let $A\subset 2^{\N}$ be the countable set as in the previous paragraph and let $Y:2^{\N}\di \N$ be the associated injection on $A$.
Let $D$ be the set of finite sequences in $A$ (without repetitions) and let $\preceq_{D}$ be the inclusion ordering, i.e.\ $w\preceq_{D}v$ if $(\forall i<|w|)(\exists j<|v|)(w(i)=_{1}v(j))$.  
Now define the net $f_{w}:D\di 2^{\N} $ as $f_{w}:=\lambda k.F(w, k)$ where $F(w, k)$ is $1$ if $(\exists i<|w|)(Y(w(i))=k)$, and zero otherwise. 
This net is increasing as $w\preceq_{D} v\di f_{w}\leq_{\lex} f_{v}$ by definition.  Let $h$ be the limit of this net and consider the following equivalences, for any $n\in \N$:
\be\label{cent}
 h(n)=1\asa (\exists f\in A)(Y(f)=n)\asa(\exists g\in 2^{\N})(Z(g,n)=0), 
\ee
The second equivalence in \eqref{cent} follows by the definition of $Y$ in terms of $Z$.  The first equivalence in \eqref{cent} follows from the 
definition of limit of a net.   By \eqref{cent}, the set $\{n \in \N: (\exists g\in 2^{\N})(Z(g,n)=0)\}$ exists, as required for $\BOOT_{C}^{-}$.

\smallskip

Next, $\eqref{ob1}\di \eqref{ob3}$ is obtained by modifying the previous paragraph as follows: the set $B=\{w^{1^{*}}: (\forall i<|w|)(w(i)\in A) \}$ is countable as $\r(f_{w})=_{\Q}\r(f_{v})\di f_{w}=_{1}f_{v}$, for $v^{1^{*}}, w^{1^{*}}$ finite sequences in $A$.
Modulo coding using $\exists^{2}$, $B$ can be viewed as a subset of $2^{\N}$.
The supremum of $\sup_{w\in B}F(w)$ for $F(w):=f_{w}$ also yields \eqref{cent} and hence $\BOOT^{-}_{C}$.  
Note that $\eqref{ob1}\di \eqref{ob3}$ is also proved in the proof of \cite{dagsamX}*{Theorem~3.23} for $\BW_{0}^{[0,1]}$ formulated using $2^{\N}$ rather than $[0,1]$.

\smallskip

Finally, the implications $\eqref{ob5}\di \eqref{ob4}\di \cocode_{1}$ are clearly trivial.  
The remaining implication $\cocode_{0}\di\CBN$ readily follows by using $\exists^{2}$ to `trim' duplicate reals from the sequence provided by $\cocode_{0}.$
\end{proof}
For the next corollary, note that $\cocode_{1}$ is weak {and} \emph{not} explosive\footnote{We have that $\QFAC^{0,1}\di \cocode_{1}$ over $\RCAo$ by \cite{dagsamX}*{Theorem 3.24} and that $\FIVE^{\omega}+\QFAC^{0,1}$ is a $\Pi_{3}^{1}$-conservative extension of $\FIVE$ by \cite{yamayamaharehare}*{Theorem 2.2}.\label{kul}}, i.e.\ the principle $\CBN$ is (mostly) responsible for obtaining $\SIX$.
\begin{cor}\label{BOEM}
Let $\X$ be any item among \eqref{ob1}-\eqref{ob5} from the theorem. The system $\FIVE^{\omega}+\X$ proves $\SIX$.
\end{cor}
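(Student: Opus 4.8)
The plan is to leverage Theorem~\ref{angel}, which already gives us (over $\ACAo$, hence certainly over $\FIVE^{\omega}$) the equivalence of all items \eqref{ob1}--\eqref{ob5}; in particular $\FIVE^{\omega}+\X$ proves $\FIVE^{\omega}+\CBN+\cocode_{1}$, so it suffices to show $\FIVE^{\omega}+\CBN \di \SIX$, since the $\cocode_{1}$ component is harmless. Recall that $\SIX$ is, in higher-order guise, the existence of the functional $\SS_{1}^{2}$ deciding $\Sigma_{2}^{1}$-formulas, and that by the bootstrap phenomenon recorded in Remark~\ref{hist}, $\FIVE^{\omega}+\BOOT$ already proves $\SIX$. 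So one natural route is: show $\FIVE^{\omega}+\CBN$ proves (enough of) $\BOOT$ — or rather the relevant instance $\BOOT_C$ — and then invoke $\FIVE^{\omega}+\BOOT_C \di \SIX$.

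First I would unwind what $\CBN$ buys us in the presence of $\SS^{2}$. Given $Y^{2}$, consider the set $A\subset 2^{\N}$ defined (using $\SS^{2}$, which renders $\Sigma_{1}^{1}$-matters decidable) by $g\in A \asa (\exists f\in 2^{\N})(Y(f,g)=0)$, or more precisely a set coded so that membership is $\Sigma_{1}^{1}$ in $g$. The point is that $\SS^{2}$ makes `$x\in A$' a genuine characteristic function, and also lets us decide equality of the relevant reals. Now $A$ is countable in the sense of Definition~\ref{standard}: one uses $\SS^{2}$ to extract, for each $g\in A$, a witnessing $f$, and from such witnesses builds an injection $Y_{A}:2^{\N}\di\N$ on $A$ — this is where one must be slightly careful, since $\SS^{2}$ decides truth of $\Sigma_{1}^{1}$-formulas but does not hand back a witness; however, $\FIVE^{\omega}$ does prove $\Sigma_{1}^{1}$-choice / $\Delta_{1}^{1}$-comprehension-style principles sufficient to obtain a selector, or one simply codes $A$ differently so that the injection is immediate (e.g.\ take $A$ to be a set of pairs $\langle g, \text{tree witnessing } g\in A\rangle$, which is then $\Pi_{1}^{1}$-definable and carries an obvious injection). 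Either way one arrives at: a countable set $A$ together with a sequence $(x_n)$ of pairwise distinct reals enumerating a subset — and this is exactly the hypothesis shape of $\CBN$.

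The crux is then to run this so that $\CBN$ (giving a \emph{surjective} $Y:\R\di\N$, i.e.\ strong countability) in turn yields, via $\cocode_1$, a genuine sequence enumerating $A$, and hence — by the $\eqref{ob4}\di\eqref{ob3}$ direction of Theorem~\ref{angel} — an instance of $\BOOT^{-}_{C}$. But $\BOOT^{-}_{C}$ alone is only the single-valued fragment over $2^{\N}$; to get all the way to $\SIX$ one needs the \emph{full} $\BOOT$ (or at least $\BOOT$ over $2^{\N}$, without the at-most-one restriction). So the real engine must be: $\FIVE^{\omega}+\CBN$ proves $\BOOT_C$ (unrestricted, over $2^{\N}$), and then $\FIVE^{\omega}+\BOOT_C\di\SIX$ by the standard bootstrap argument — a $\Sigma_{2}^{1}$-formula $(\exists X\subset\N)\varphi(X,n)$ with $\varphi$ arithmetical-plus-$\SS^{2}$ is, after coding $X$ by an element of $2^{\N}$ (legitimate since the outer set quantifier in $\Sigma_2^1$ ranges over subsets of $\N$, and $\SS^2$ handles the $\Pi_1^1$ matrix), an instance of the predicate whose extension $\BOOT_C$ asserts to exist, whence $\SS_1^2$ is defined. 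I would therefore structure the proof as: (a) reduce to proving $\FIVE^{\omega}+\CBN\di\BOOT_C$; (b) given $Y^2$, build from a $\Sigma_1^1$-in-$g$ predicate (using $\SS^2$) a countable set $A\subset 2^\N$ and, crucially, the sequence of distinct elements enumerating a co-infinite-or-not subset as required — feeding the hypothesis of $\CBN$; (c) from $\CBN$ get strong countability, from $\cocode_1$ (free in $\FIVE^{\omega}+\X$) get an actual enumeration, and from that enumeration recover the set $X$ witnessing $\BOOT_C$ exactly as in the $\eqref{ob4}\di\eqref{ob3}$ step of Theorem~\ref{angel}; (d) invoke $\FIVE^{\omega}+\BOOT_C\di\SIX$.

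The main obstacle, I expect, is step (b): arranging the construction so that the \emph{hypotheses} of $\CBN$ are actually met — i.e.\ producing \emph{a priori} a sequence of pairwise distinct reals lying in $A$ — rather than merely knowing $A$ is infinite. If $A$ happens to be finite for some $Y$, $\CBN$ gives nothing, so one must either argue the relevant instances reduce to the case where $A$ is infinite (padding $A$ with a fixed sequence of distinct reals disjoint from the `interesting' part, then stripping it off afterwards), or observe that the finite case is handled directly by $\FIVE^{\omega}$ and $\cocode_1$/$\fin_0$. Getting this padding to interact correctly with the injection and with the eventual recovery of $X$ is the delicate bookkeeping; everything else is a routine assembly of results already in the excerpt (Theorem~\ref{angel} for the internal equivalences, Remark~\ref{hist} for $\BOOT\di\SIX$, and Footnote~\ref{kul} for the conservativity that makes the $\cocode_1$ part free).
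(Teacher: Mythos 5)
Your opening move---using Theorem~\ref{angel} to reduce the corollary to a single item---is fine and matches the paper's first step (the paper reduces to item~\eqref{ob1} rather than item~\eqref{ob5}). The genuine gap is in your step (b). For an \emph{arbitrary} type-two $Y$, the formula $(\exists f\in 2^{\N})(Y(f,\cdot)=0)$ is \textbf{not} a $\Sigma_{1}^{1}$-formula that $\SS^{2}$ can decide: the Suslin functional decides $\Sigma_{1}^{1}$-formulas with type-one (function) parameters only, and the paper explicitly flags that functional (type-two) parameters are excluded, Gandy's Superjump being what an extension to such parameters would require. If $\SS^{2}$ could decide such formulas, $\BOOT$ (and $\BOOT_{C}$) would be an outright consequence of $\FIVE^{\omega}$ and the whole explosion phenomenon would trivialise. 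Consequently your set $A$ has no characteristic function, the witness extraction ($\Sigma_{1}^{1}$-choice applies only to genuine second-order $\Sigma_{1}^{1}$-formulas, not to this third-order formula), the injection on $A$, and hence the hypotheses of $\CBN$, are all unavailable; the proposed derivation of full $\BOOT_{C}$ from $\FIVE^{\omega}+\CBN$ does not get off the ground. (Note also the mismatch of form: the instance of $\BOOT_{C}$ you need is the set of $n\in\N$ with $(\exists f\in 2^{\N})(Y(f,n)=0)$, whereas your $A$ is indexed by $g\in 2^{\N}$.)

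A second problem is your premise that \emph{full} $\BOOT_{C}$ is the required engine. It is not (and it is doubtful, and certainly not established by your argument, that $\FIVE^{\omega}+\X$ proves it): to obtain $\SIX$ one only needs $\Sigma_{2}^{1}$-comprehension, and in the presence of $\FIVE^{\omega}$ one can first uniformise the $\Pi_{1}^{1}$-matrix ($\Pi_{1}^{1}$-uniformisation \`a la Kondo is available at the level of $\FIVE$), so that the witness $f\in2^{\N}$ is unique; the resulting instance satisfies the at-most-one condition and is exactly an instance of $\BOOT_{C}^{-}$, i.e.\ of item~\eqref{ob3}, which Theorem~\ref{angel} already hands you. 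This is in essence what the result invoked by the paper encapsulates: the paper's own proof simply observes that, since $(\SS^{2})\di(\exists^{2})$, item~\eqref{ob1} is equivalent to its Cantor-space version $\BW_{0}^{C}$, and then cites \cite{dagsamX}*{Theorem 3.22} for $\FIVE^{\omega}+\BW_{0}^{C}\vdash\SIX$. So the corollary is a short reduction plus a citation; your architecture (a)--(d) could be repaired by replacing ``full $\BOOT_{C}$'' with ``$\BOOT_{C}^{-}$ plus uniformisation'', but as written the central step both fails and aims at an unnecessarily strong target.
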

\begin{proof}
Since $(\SS^{2})\di (\exists^{2})$, we may use the latter to freely convert between reals in $[0,1]$ and their binary representations.  
Hence, $\BW_{0}^{[0,1]}$ is readily seen to be equivalent to $\BW_{0}^{C}$, where the latter is the former but formulated for Cantor space $2^{\N}$, namely as in \cite{dagsamX}*{Def.\ 3.21}.  
By \cite{dagsamX}*{Theorem 3.22}, $\SIX$ follows from $\FIVE^{\omega}+\BW_{0}^{C}$, and we are done.  
\end{proof}
The following corollary establishes that $\BW_{0}^{[0,1]}$ is hard to prove, while it by itself does not yield $\L_{2}$-sentences beyond $\RCA_{0}$.  Corollary \ref{surplus} suggests that this kind of result does not depend on our choice of the notion of set used. 
Let $\X\in \L_{2}$ be any sentence not provable in $\RCA_{0}$.  
\begin{cor}\label{Schor}
The system $\Z_{2}^{\omega}+\QFAC^{0,1}$ cannot prove $\BW_{0}^{[0,1]}$. The system $\RCAo+\BW_{0}^{[0,1]}$ cannot prove $\X$.
\end{cor}
\begin{proof}
For the first part, by \cite{dagsamX}*{\S3}, $\Z_{2}^{\omega}+\QFAC^{0,1}$ cannot prove $\NIN$, while $\cocode_{0}\di \NIN$.  
For the remaining part, let $\X\in \L_{2}$ be a sentence not provable in $\RCA_{0}$.  Then $\RCAo+\BW_{0}\vdash \X$ yields $\RCA_{0}\vdash \X$ under $\ECF$ as the latter makes $\BW_{0}$ vacuously true, following the penultimate part of the proof of Theorem \ref{crucru}.  
\end{proof}
We could obtain similar results for the Ascoli-Arzel\`a theorem (see \cite{simpson2}*{III.2} for the associated RM results) for nets with countable index sets.  
The following corollary should be contrasted with \cite{samph}*{\S3.2}, where it is shown that the existence of a modulus of convergence for 
nets index by $\N^{\N}$, yields $\QFAC^{0,1}$.
\begin{cor}\label{Schor2}
The theorem remains valid if we require a modulus of convergence in the conclusion of $\MCT_{0}^{\net}$ or a convergent sub-sequence in $\BW_{0}^{[0,1]}$.
\end{cor}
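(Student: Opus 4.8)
The plan is to show that each of the five equivalences in Theorem~\ref{angel} survives verbatim when the two convergence-type items are strengthened: in \eqref{ob2} we demand that the monotone net come equipped with a \emph{modulus of convergence}, and in \eqref{ob1} we demand that the supremum be \emph{attained} (or approached by an explicitly indexed convergent sub-sequence). Since the whole theorem is proved under $\ACAo$ — with the $\RCAo$-version obtained by the excluded-middle trick on $(\exists^{2})\vee\neg(\exists^{2})$, all strengthened items being vacuous when $\neg(\exists^{2})$ — it suffices to re-examine the implications of the proof in the presence of $\exists^{2}$. The key observation is that in the proof of Theorem~\ref{angel} the convergent objects are \emph{never} obtained abstractly: the net $f_{w}:=\lambda k.F(w,k)$ built from the injection $Y$ on $A$ is such that, once $g\in 2^{\N}$ with $g(n)=1\asa(\exists f\in A)(Y(f)=n)$ is in hand, one has explicit control of the stage at which each bit stabilises. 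Concretely, the $n$-th bit of the net $f_{w}$ reaches its final value $g(n)$ as soon as $w$ contains some $f\in A$ with $Y(f)=n$ (if $g(n)=1$) and never changes otherwise; so the map sending $n$ to ``the finite sequence $\langle f\rangle$ with $Y(f)=n$'' (using $\exists^{2}$ to search $2^{\N}$, as is done throughout the proof) is a modulus of convergence definable from the same data. Hence the direction $\eqref{ob4}\to\eqref{ob2}$ and $\eqref{ob1},\eqref{ob2}\to\eqref{ob3}$ all produce, at no extra cost, the strengthened conclusions; this is the only place where anything needs to be checked.

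First I would state that, by the final sentence of Theorem~\ref{angel} and the second paragraph of its proof, we may assume $(\exists^{2})$ throughout, the strengthened items being vacuously true under $\neg(\exists^{2})$ just as their unstrengthened counterparts are. Next I would observe that the implications not involving items \eqref{ob1} and \eqref{ob2} are literally unchanged. Then I would revisit $\eqref{ob4}\to\eqref{ob1}$ and $\eqref{ob4}\to\eqref{ob2}$: given $\cocode_{0}$, a countable set in $[0,1]$ becomes a sequence, and $\ACA_{0}$ (via \cite{simpson2}*{III.2}) already supplies a supremum \emph{together with a rate of convergence of the usual interval-halving approximants}, so the strengthened \eqref{ob1} and the modulus in \eqref{ob2} come for free — here one uses that the Bolzano–Weierstrass/monotone-convergence proof in \cite{simpson2}*{III.2} is effective once the sequence is given. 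For the reverse directions $\eqref{ob1}\to\eqref{ob3}$, $\eqref{ob2}\to\eqref{ob3}$, I would point out that the hypotheses are now \emph{stronger} (we are handed a modulus, resp.\ a convergent sub-sequence), so a fortiori the same constructions of $\BOOT^{-}_{C}$ go through; the modulus is simply discarded. Finally I would note that the bridging equivalences $\eqref{ob3}\to\eqref{ob4}$ (from \cite{dagsamX}*{Theorem 3.16}) and $\eqref{ob4}\leftrightarrow\eqref{ob5}$, $\cocode_{0}\to\CBN$ are untouched.

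The only genuine subtlety — and the step I expect to be the main obstacle — is verifying that the modulus of convergence one extracts in $\eqref{ob4}\to\eqref{ob2}$ is \emph{definable in $\ACAo$ from the data of the net} and not merely asserted to exist: the net in \eqref{basik}–\eqref{cent} is indexed by finite sequences in $A$ under inclusion, so ``convergence'' here means that for every $n$ there is a cofinal sub-collection on which the $n$-th coordinate is constant, and one must exhibit, uniformly in $n$, a single index $w_{n}\in D$ past which the $n$-th bit is frozen. Using $\exists^{2}$ and (in the $\cocode_{0}$-case) the enumerating sequence $(h_{m})_{m\in\N}$ of $A$, one can set $w_{n}$ to be (the singleton of) the first $h_{m}$ with $Y(h_{m})=n$ when such exists and $\langle\rangle$ otherwise; checking that this genuinely dominates the tail in the $\preceq_{D}$-ordering is the one place requiring care, but it is routine given that $Y$ is injective on $A$ and $F(w,k)$ depends only on whether some element of $w$ is sent to $k$. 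Symmetrically, for $\BW_{0}^{[0,1]}$ the strengthened conclusion (an explicitly indexed sub-sequence realising the supremum) is produced by the same interval-halving bookkeeping available under $\ACA_{0}$. With these checks in place, every arrow in the diagram of Theorem~\ref{angel} is seen to carry the strengthened formulations, and Corollary~\ref{Schor2} follows.
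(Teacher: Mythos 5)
Your proposal is correct and follows essentially the same route as the paper: the strengthened conclusions trivially imply the original ones (so those arrows are a fortiori), and for the converse one uses $\cocode_{0}$ to replace the countable (index) set by a sequence, after which the modulus of convergence, resp.\ the convergent sub-sequence realising the supremum, is definable in the usual arithmetical way under $\ACA_{0}$. Your final ``subtlety'' paragraph about the specific net $f_{w}$ is more than is needed (that net only occurs where the strengthened hypothesis is merely discarded), but it does not affect the correctness of the argument.
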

\begin{proof}
Given $\cocode_{0}$, a countable index set is given by a sequence.   Hence, a modulus of convergence can be defined in the usual (arithmetical) way.
The same argument works for $\BW_{0}^{[0,1]}$.
\end{proof}
Obviously, we would like to obtain equivalences like in Theorem \ref{angel} for \emph{strongly} countable sets.  
Now, if we try to imitate the proof of e.g.\ $\eqref{ob2}\di \eqref{ob3}$  for strongly countable sets, we note that \eqref{cent} is trivial as $Y$ is now a bijection, and hence $g=_{1}11\dots$ in \eqref{cent}. 
Hence, we cannot obtain $\BOOT^{-}_{C}$ in this way, which also follows from Footnote \ref{kul} and Corollary \ref{BOEM}.  
Nonetheless, Corollary \ref{Schor2} suggest the following interesting version of item \eqref{ob2}:  
let $\MCT_{1}^{\net}$ be $\MCT_{0}^{\net}$ restricted to strongly countable index set, but `upgraded' with the existence of a \emph{modulus of convergence}.  
Similarly, item \eqref{ob1} can be reformulated based on Corollary \ref{Schor2} as:
\begin{princ}[$\BW_{1}^{[0,1]}$]
For a strongly countable set $A\subset [0,1]$, there is a sequence $(y_{n})_{n\in \N}$ in $A$ that converges to the supremum of $A$.   
\end{princ}
We could omit `to the supremum of $A$' in $\BW_{1}^{[0,1]}$ if we also assume that the sequence is not eventually constant. 
We discuss the exact properties of $\BW_{1}^{[0,1]}$ and $\MCT_{1}^{\net}$, their relation to \emph{hyperarithmetical analysis} in particular, in Remark \ref{dilf}. 

\smallskip

Finally, let $!\QFAC^{0,1}$ be $\QFAC^{0,1}$ restricted to \emph{unique} existence.  
\begin{thm}\label{fluk}
The system $\ACAo$ proves $\MCT_{1}^{\net}\asa \cocode_{1}\asa {!\QFAC^{0,1}}$ and $\MCT_{0}^{\net}\asa [\MCT_{1}^{\net}+ \CBN]$.
\end{thm}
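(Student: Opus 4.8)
The structure of the proof mirrors that of Theorem~\ref{angel}, but with bijections in place of injections and with the decisive role now played by the modulus of convergence. As in the proof of Theorem~\ref{angel}, I would first dispose of the case $\neg(\exists^{2})$: under this hypothesis all functions on $\R$ are continuous, hence (by the argument in the second paragraph of the proof of Theorem~\ref{crucru}) every countable — a fortiori every strongly countable — set in $[0,1]$ is empty, so all principles in sight are vacuously true. Since $\MCT_{0}^{\net}$ and $\MCT_{1}^{\net}$ both presuppose $(\exists^{2})$ to even state \eqref{basik}, the equivalences over $\RCAo$ follow from the equivalences over $\ACAo$ via the law of excluded middle $(\exists^{2})\vee\neg(\exists^{2})$. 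For the remainder I work in $\ACAo$, so that $\ACA_0$ and its RM consequences from \cite{simpson2}*{III.2} are available, elementhood is decidable, and I may freely pass between $[0,1]$ and $2^{\N}$.

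\textbf{The chain $\cocode_{1}\di !\QFAC^{0,1}$.} Suppose $(\forall n\in\N)(\exists! g\in\N^{\N})B(n,g)$ with $B$ quantifier-free. Using $\exists^{2}$, form the strongly countable set — here is where uniqueness is essential — by a pairing/enumeration trick: one wants a surjection $Y$ from a subset $A\subseteq[0,1]$ onto $\N$ where the unique witnesses are recoverable. Concretely, let $A$ be the set of reals coding pairs $\langle n, g_n\rangle$ where $g_n$ is the unique witness for $n$; the map sending such a code to $n$ is a bijection onto $\N$ once one checks $A$ is nonempty and that $\exists^{2}$ lets us decide membership. Applying $\cocode_{1}$ lists $A$ as a sequence, from which $!\QFAC^{0,1}$ reads off the choice function. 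The converse $!\QFAC^{0,1}\di\cocode_{1}$ is essentially \cite{dagsamX}*{Theorem 3.24} restricted to unique existence — given a strongly countable $A$ with bijection $Y:A\to\N$, unique choice applied to the formula $(\exists! x\in A)(Y(x)=n)$ (which is unique precisely because $Y$ is injective on $A$) produces the enumerating sequence.

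\textbf{The equivalence $\MCT_{1}^{\net}\asa\cocode_{1}$.} For $\cocode_{1}\di\MCT_{1}^{\net}$: given a strongly countable index set $A$, apply $\cocode_{1}$ to list it as a sequence $(x_m)_{m\in\N}$; the net over $D$ (finite repetition-free sequences in $A$, ordered by inclusion) then reduces to a monotone sequence-indexed situation, and $\ACA_0$ supplies both the limit and the modulus of convergence in the usual arithmetical way (cf.\ Corollary~\ref{Schor2}). The interesting direction is $\MCT_{1}^{\net}\di\cocode_{1}$. Here I would adapt the net $f_w:=\lambda k.F(w,k)$ from the proof of $\eqref{ob2}\di\eqref{ob3}$ in Theorem~\ref{angel}, but now with $Y$ a \emph{bijection}: as the remark before the theorem observes, the limit $g$ is then just $11\cdots$ and carries no information, so $\BOOT^{-}_C$ cannot be extracted. \textbf{This is the main obstacle, and the modulus is what resolves it.} A modulus of convergence $\Phi$ for this net tells us, for each $k$, an index $w$ beyond which $f_v(k)$ is stable; since $f_v(k)=1$ exactly when $Y^{-1}(k)$ appears in $v$, the modulus effectively enumerates which natural numbers $k$ are hit by $Y$ and produces, from $\Phi(k)$, the actual element $Y^{-1}(k)\in A$. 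Reading off $m\mapsto Y^{-1}(m)$ gives the enumerating sequence for $A$, i.e.\ $\cocode_{1}$. One must be careful that the modulus is genuinely giving a witnessing finite sequence and not merely an abstract bound; choosing $D$ and $\preceq_D$ so that $\Phi(k)$ can be taken to be (a code for) a concrete $w\in D$ containing $Y^{-1}(k)$ handles this.

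\textbf{The splitting $\MCT_{0}^{\net}\asa[\MCT_{1}^{\net}+\CBN]$.} The forward direction: $\MCT_{0}^{\net}$ obviously implies $\MCT_{1}^{\net}$ (strongly countable index sets are a special case, and by Corollary~\ref{Schor2} the modulus comes for free once $\cocode_0$ — which $\MCT_{0}^{\net}$ yields via Theorem~\ref{angel} — lists the index set as a sequence); and $\MCT_{0}^{\net}\di\cocode_0\di\CBN$ by Theorem~\ref{angel}, its final line. For the reverse direction $[\MCT_{1}^{\net}+\CBN]\di\MCT_{0}^{\net}$: by the $\cocode_1\asa\MCT_1^{\net}$ half just proved, $\MCT_{1}^{\net}$ gives $\cocode_{1}$, so we have $\CBN+\cocode_{1}$, which is item~\eqref{ob5} of Theorem~\ref{angel} and hence equivalent to $\cocode_{0}$; and $\cocode_{0}\di\MCT_{0}^{\net}$ is $\eqref{ob4}\di\eqref{ob2}$ of that theorem. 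Thus the splitting is really a repackaging of Theorem~\ref{angel}\eqref{ob5} once the new equivalence $\MCT_{1}^{\net}\asa\cocode_{1}$ is in hand, and the only genuinely new work is the modulus-extraction argument in the previous paragraph.
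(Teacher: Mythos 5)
Your proposal is correct and follows essentially the same route as the paper: the same chain $!\QFAC^{0,1}\asa\cocode_{1}$ via the uniqueness/surjectivity clauses, the same extraction of the enumeration from the modulus of convergence for the net $\lambda w.f_{w}$ built from a bijective $Y$ (the modulus, being an element of $D$, literally contains the witness with $Y$-value $k$), and the same reduction of the splitting $\MCT_{0}^{\net}\asa[\MCT_{1}^{\net}+\CBN]$ to Theorem \ref{angel} once $\MCT_{1}^{\net}\asa\cocode_{1}$ is established. Your explicit use of codes of pairs $\langle n,g_{n}\rangle$ in the direction $\cocode_{1}\di{!\QFAC^{0,1}}$ is a slightly more careful rendering of what the paper calls the ``obvious'' bijection, but it is the same argument.
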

\begin{proof}
First of all, $\cocode_{1}\leftarrow {!\QFAC^{0,1}}$ follows by applying the latter to the second condition, i.e.\ surjection on $\N$, of the definition of `bijection'.  
For the other implication, given $Y^{2}$ such that $(\forall n\in \N)(\exists! f\in 2^{\N})(Y(f, n)=0)$, define $A:=\{g\in 2^{\N}: (\exists n\in \N)(Y(g, n)=0)\} $ and define $Z:2^{\N}\di \N$
as follows: $Z(f)$ is the least $n$ such that $Y(f, n)=0$, if such there is, and zero otherwise.  By assumption, $Z$ is bijective on $A$ and $\cocode_{1}$ provides an enumeration of $A$.  
The latter sequence readily yields the choice function required by $!\QFAC^{0,1}$.   
Given $\exists^{2}$ and the usual coding, we can replace a quantifier `$(\exists! f\in \N^{\N})$' by a quantifier `$(\exists! f\in 2^{\N})$', and ${!\QFAC^{0,1}}$ follows without restrictions. 

\smallskip

Secondly, the equivalence $\MCT_{0}^{\net}\asa [\MCT_{1}^{\net}+ \CBN]$ follows from $\MCT_{1}^{\net}\asa \cocode_{1}$ combined with Theorem \ref{angel}.  
For the latter equivalence, the reverse implication is straightforward as the sequence provided by $\cocode_{1}$ for the strongly countable 
index set from $\MCT_{1}^{\net}$ allows us to perform the usual `interval-halving' proof using $\ACA_{0}$.  For the forward implication, let $\lambda w.f_{w}$ be the net as in the proof of Theorem \ref{angel} defined for a \emph{strongly} countable set $A$ and associated $Y:2^{\N}\di \N$ which is bijective on $A$.  By definition, we have $\lim_{w}f_{w}=11\dots$ and let $g^{0\di 1^{*}}$ be a modulus of convergence, i.e.\ 
\[
(\forall k\in \N)(\forall v \succeq g(k))(\overline{f_{v}}k =_{0^{*}}\overline{11\dots}k  ).
\]
Again by definition, we have $(\forall k\in \N)(\exists i<|g(k)|)(Y(g(k)(i))=k )$, i.e.\ $\cocode_{1}$ follows, and we are done.
\end{proof}
Let $\IND_{\Sigma}$ be the induction axiom for $\Sigma$-formulas, i.e.\ of the form $\varphi(n)\equiv (\exists f\in \N^{\N})(Z(f, n)=0)$.
Applying $\ECF$ from Remark \ref{ECF} shows that the base theory in Corollary \ref{wonk} is a conservative extension of $\RCA_{0}$.
\begin{cor}\label{wonk}
The system $\RCAo+\IND_{\Sigma}$ proves $\BW_{1}^{[0,1]}\asa\cocode_{1}$
\end{cor}
\begin{proof}
The reverse implication follows as in the proof of the theorem. 
For the forward direction, fix $A\subset [0,1]$ and let $Y:[0,1]\di \N$ be a bijection on $A$.  Now define $B\subset [0,1]$ as follows: $x\in B$ if and only if the binary representation $f_{x}\in 2^{\N}$ of $x$ (readily defined using $\exists^{2}$) satisfies:
\be\label{hoela}
[f_{x}=\underbrace {11\dots 11}_{\textup{length $k$}}*\langle 0\rangle * g_{0}\oplus g_{1}\oplus \dots \oplus g_{k}] \wedge (\forall i\leq k)(Y(g_{i})=i  \wedge \r(g_{i})\in A).
\ee
Note that induction as in $\IND_{\Sigma}$ is needed to establish that `$A$ is strongly countable' implies `$B$ is strongly countable'.
The sequence $(y_{n})_{n\in \N}$ provided by $\BW_{1}^{[0,1]}$ must converge to $1$ by \eqref{hoela}.  This clearly yields a sequence listing the elements of $A$.  
\end{proof}
Note that the previous proof does not go through if we omit the sequence from $\BW_{1}^{[0,1]}$.  This sequence can of course be obtained from the supremum using $\QFAC^{0,1}$, but the latter already implies $\cocode_{1}$.
Moreover, by Footnote \ref{kul}, $\MCT_{1}^{\net}$ and $\BW_{1}^{[0,1]}$ are not explosive when combined with e.g.\ the Suslin functional, in contrast to $\MCT_{0}^{\net}$ by Corollary \ref{BOEM}.

\smallskip

The following corollary deals with $\fin_{0}$, $\CBN'$, and $\NCC$ from Section \ref{bumm}. 
\begin{cor}\label{useff}
Over $\RCAo+\NCC$, we have $\cocode_{0}\asa \CBN\asa [\CBN' +\fin_{0}]$.
\end{cor}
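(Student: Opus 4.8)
The plan is to reduce everything to the single claim $\RCAo+\NCC\vdash\cocode_1$; the remaining implications are then either trivial or come from Theorem~\ref{angel}. First I would apply the excluded-middle trick: in case $\neg(\exists^2)$ all functions on $[0,1]$ are continuous, so, as in the second paragraph of the proof of Theorem~\ref{crucru}, there are no non-empty countable subsets of $[0,1]$ (a fortiori no strongly countable ones) and no sequence of pairwise distinct reals inside a countable set; hence $\cocode_0,\cocode_1,\fin_0,\CBN,\CBN'$ are all vacuously true and the equivalences hold. So we may assume $(\exists^2)$, i.e.\ work over $\ACAo+\NCC$, and in particular freely convert reals in $[0,1]$ into binary expansions and work over $2^\N$ with decidable elementhood, exactly as in the proof of Theorem~\ref{angel}. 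Now $\cocode_0$ already implies $\CBN$ and $\cocode_1$ by Theorem~\ref{angel}, and it implies $\fin_0$ since the latter is literally a restriction of $\cocode_0$; moreover $\CBN$ and $\CBN'$ are readily seen to be equivalent, since a sequence of pairwise distinct reals contained in $A$ and an injection $\N\di A$ carry the same information. Thus the corollary follows once we prove $\RCAo+\NCC\vdash\cocode_1$ and combine it with Theorem~\ref{angel}.

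For $\NCC\di\cocode_1$, let $A\subseteq 2^\N$ be non-empty and strongly countable via $Y:2^\N\di\N$ which is injective on $A$ and satisfies $(\forall n)(\exists f\in A)(Y(f)=n)$. Consider the predicate $A'(\langle n,k\rangle, b)\equiv (\exists f\in 2^\N)\big(b\le 1 \wedge f\in A\wedge Y(f)=n\wedge f(k)=b\big)$, whose matrix is, using $\exists^2$ to absorb the side conditions, of the form occurring in $\NCC$. Its premise $(\forall j)(\exists b)A'(j,b)$ holds: for $j=\langle n,k\rangle$, surjectivity of $Y$ on $A$ gives some $f^{*}\in A$ with $Y(f^{*})=n$, and then $b:=f^{*}(k)$ works. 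So $\NCC$ provides $g:\N\di\N$ with $(\forall n,k)\,A'(\langle n,k\rangle, g(\langle n,k\rangle))$. By injectivity of $Y$ on $A$ there is, for each $n$, a \emph{unique} $f^{(n)}\in A$ with $Y(f^{(n)})=n$, so the witness $f$ in $A'(\langle n,k\rangle, g(\langle n,k\rangle))$ must equal $f^{(n)}$, which forces $g(\langle n,k\rangle)=f^{(n)}(k)$. Hence $x_n:=\lambda k.\,g(\langle n,k\rangle)$ defines a sequence with $x_n=_1 f^{(n)}\in A$, and every $f\in A$ equals $x_{Y(f)}$; translating back to $[0,1]$ yields exactly the sequence required by $\cocode_1$, completing the proof of $\RCAo+\NCC\vdash\cocode_1$.

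Finally, combining this with the equivalence $\cocode_0\asa \CBN+\cocode_1$ from Theorem~\ref{angel} gives $\CBN\di\cocode_0$ over $\RCAo+\NCC$, hence $\cocode_0\asa\CBN$; then $\CBN\di\fin_0$ follows (via $\cocode_0\di\fin_0$), so together with $\CBN\asa\CBN'$ we get $\CBN\asa[\CBN'+\fin_0]$, as desired. The point requiring care, and the reason $\NCC$ suffices here although $\cocode_0$ in general does \emph{not} follow from $\NCC$, is the role of \emph{strong} countability in the second paragraph: $\NCC$ only chooses \emph{numbers}, so on its own it cannot produce reals lying in a merely countable set—there one cannot even decide which $n\in\N$ are $Y$-values, a $\Sigma_1^1$ question—whereas bijectivity makes the witness $f^{(n)}$ unique, so that the bit-by-bit choices delivered by $\NCC$ are automatically coherent and assemble into an honest enumeration. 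This is consistent with $\cocode_0$ (equivalently $\CBN+\cocode_1$) being strictly stronger, per Corollary~\ref{BOEM}.
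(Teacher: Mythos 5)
Your proposal is correct, but it takes a genuinely different route from the paper on the second equivalence. For $\cocode_{0}\asa\CBN$ you and the paper do essentially the same thing, namely combine the equivalence $\cocode_{0}\asa[\CBN+\cocode_{1}]$ from Theorem \ref{angel} with $\NCC\di\cocode_{1}$; the only difference is that you prove $\NCC\di\cocode_{1}$ from scratch by the unique-witness, bit-extraction argument (which is fine and is essentially the standard proof), whereas the paper simply cites $\NCC\di\DCA\di\cocode_{1}$. The real divergence is in $\CBN\asa[\CBN'+\fin_{0}]$: you observe that a sequence of pairwise distinct reals lying in $A$ and an injection from $\N$ to $A$ are literally the same datum, so $\CBN\asa\CBN'$ is immediate, and $\fin_{0}$ comes for free from $\CBN\di\cocode_{0}$ since $\fin_{0}$ is a special case of $\cocode_{0}$. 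This identification is consistent with the paper's own usage (its proof converts a sequence of distinct elements of $A$ into ``the injection $\N\di A$'' before invoking $\CBN'$), so your shortcut is legitimate. The paper instead gives a self-contained derivation of $\BOOT^{-}_{C}$ (hence $\cocode_{0}$ via Theorem \ref{angel}) from $\CBN'+\fin_{0}$ over $\RCAo+\NCC$: for the countable set $A$ attached to an instance of $\BOOT^{-}_{C}$ it distinguishes the infinite case, where $\NCC$ together with ${!\QFAC^{0,1}}$ (obtained from $\cocode_{1}$ via Theorem \ref{fluk}) is used to actually construct the injection $\N\di A$ feeding $\CBN'$, from the finite case, which is handled by $\fin_{0}$. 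What the paper's route buys is precisely the intended ``splitting'' content: it shows how the Cantor--Bernstein hypothesis is verified for an arbitrary given countable set without falling back on the already-established direction $\CBN\di\cocode_{0}$. What your route buys is brevity, at the price of making $\fin_{0}$ and the $\CBN$/$\CBN'$ distinction look redundant in the statement.
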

\begin{proof}
 First of all, we may assume $(\exists^{2})$ is given, as all principles from the corollary are vacuously true given $\neg(\exists^{2})$, as in Remark \ref{flop}. 
Now, given $\NCC$, $\cocode_{0}\asa \CBN$ follows from Theorem \ref{angel} as $\NCC\di\DCA\di \cocode_{1}$ where the latter implications are from \cite{dagsamIX}*{\S3.1} and \cite{dagsamX}*{Cor.\ 3.33}.

\smallskip

Clearly, $\cocode_{0}\di [\CBN'+\fin_{0}]$ while to obtain $\BOOT_{C}^{-}$ from $\CBN'+\fin_{0}$, fix $Z$ such that $(\forall n\in \N)(\exists \textup{ at most one } g\in 2^{\N})(Z(g, n)=0)$ and define the set $A:=\{ g\in 2^{\N}: (\exists n\in \N)(Z(g, n)=0))\}$. 
This set is countable with an obvious injection defined in terms of $Z$.  Now consider 
\be\label{durks}
(\forall n\in \N)(\exists k\geq n)(\exists f\in 2^{\N})(Z(f, k)=0), 
\ee
which intuitively expresses that $A$ is infinite.  Assuming \eqref{durks} holds, apply $\NCC$ and let $g:\N\di \N$ be the resulting sequence.
Use primitive recursion to define $h(0):=0$ and $h(n+1):=g(h(n))$.  
Then $(\forall n\in \N)(\exists f\in 2^{\N} )(Z(f, h(n))=0)$ and note that this existence is \emph{unique}, by assumption on $Z$.
In the first paragraph, we mentioned $\NCC\di \cocode_{1}$, which yields $!\QFAC^{0,1}$ by the theorem.  Applying the latter to  $(\forall n\in \N)(\exists! f\in 2^{\N} )(Z(f, h(n))=0)$ yields a sequence $(f_{n})_{n\in \N}$ of \emph{distinct} elements of $A$. 
Now define the injection $\N\di A$ as mapping $n\in \N$ to $f_{n}$ and note that $A$ is strongly countable by $\CBN'$.
Apply $!\QFAC^{0,1}$ to the second part of the definition of `bijection to $\N$' to obtain a sequence listing all elements of $A$.  
Then $\BOOT_{C}^{-}$ readily follows.  If \eqref{durks} is false, then $A$ is finite in the sense of the antecedent of $\fin_{0}$, and the latter also readily yields $\BOOT^{-}_{C}$ in this case. 
\end{proof}
It is an interesting exercise to show that in the previous results, $\CBN$ can be replaced by $\CBN_{\infty}$, i.e.\ $\CBN$ with `$Y:\R\di \N$' replaced by `$Y:\R\di\N_{\infty}$' in Definition \ref{standard}.
Another possible variation is to study bijections defined as mappings $\N_{\infty}\di \R$, for which a version of \eqref{hyuk} can be used to obtain $\BOOT_{C}^{-}$.
In this light, the above results exhibit some robustness.  

\smallskip

Next, we establish Corollary \ref{surplus}, which shows that $\BW_{0}^{[0,1]}$ cannot imply $\ACA_{0}$, \emph{even if} we use a notion of set other than Definition \ref{openset}.
Here, $\MUC$ involves the \emph{intuitionistic fan functional}:
\be\tag{$\MUC$}
(\exists \Omega^{3})(\forall Y^{2})(\forall f, g\in 2^{\N})(\overline{f}\Omega(Y)=\overline{g}\Omega(Y)\di Y(f)=_{0}Y(g)),
\ee
which formalises a version of Brouwer's continuity theorem, i.e.\ that all $Y:2^{\N}\di \N$ are (uniformly) continuous.  
By Remark \ref{frorem}, $\MUC$ therefore implies $\neg(\exists^{2})$.  
As noted in \cite{kohlenbach2}*{\S3}, $\RCAo+\MUC$ is a conservative extension of $\WKL_{0}$ for $\L_{2}$-sentences.
As for Corollary \ref{crucrucor}, the following is actually a meta-result.  
\begin{cor}\label{surplus}
Regardless of the meaning of `$x\in A$', the system $\RCAo+\MUC$ proves that a countable set $A\subset [0,1]$ has a supremum.  
\end{cor}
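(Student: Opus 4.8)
The plan is to show that, under $\MUC$, there is in fact \emph{no} strongly countable set $A\subseteq[0,1]$ whatsoever, regardless of how ``$x\in A$'' is interpreted, so that the implication ``$A$ strongly countable $\di$ $\sup_{x\in A}F(x)$ exists'' (as in $\BW_{0}^{[0,1]}$, or simply ``$\sup A$ exists'') is vacuously provable. This is the same mechanism that underlies Corollary~\ref{crucrucor}, and it is what makes the statement a genuine meta-result.

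First I would record two consequences of $\MUC$. On one hand, $\MUC\di\neg(\exists^{2})$: if $\exists^{2}$ existed, then $f\mapsto 1$ (if $(\forall n)(f(n)=0)$) and $0$ (otherwise) would be a type-two functional discontinuous at $00\dots$, contradicting the uniform continuity on $2^{\N}$ witnessed by $\Omega^{3}$. Hence, by \cite{kohlenbach2}*{\S3}, every function $\R\di\R$ is continuous. On the other hand, $\MUC\di\WKL$, since $\RCAo+\MUC$ is a conservative \emph{extension} of $\WKL_{0}$ (see \cite{kohlenbach2}*{\S3} and the remark just before the corollary). Note that neither of these mentions the meaning of ``$x\in A$''.

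Next, suppose toward a contradiction that some $A\subseteq[0,1]$ is strongly countable, witnessed by $Y:\R\di\N$ which is injective on $A$ and satisfies $(\forall n\in\N)(\exists x\in A)(Y(x)=n)$. Viewing $Y$ as a map $\R\di\R$, it is continuous by the previous paragraph, so by (the proof of) \cite{kohlenbach4}*{Prop.\ 4.10} it has an RM-code on $[0,1]$, using $\WKL$; by the RM of $\WKL$ in \cite{simpson2}*{IV}, this code, and hence $Y$, is \emph{uniformly} continuous on $[0,1]$. Fix $N_{0}\in\N$ with $|Y(x)-Y(y)|<1$ whenever $x,y\in[0,1]$ and $|x-y|<2^{-N_{0}}$. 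Covering $[0,1]$ by finitely many intervals of diameter $<2^{-N_{0}}$ and using that $Y$ is integer-valued, $Y$ is constant on each such interval, so $Y$ takes only finitely many distinct values on $[0,1]\supseteq A$. This contradicts $(\forall n\in\N)(\exists x\in A)(Y(x)=n)$ (pick any $n$ among the missing values). Hence no strongly countable $A\subseteq[0,1]$ exists, so the conclusion holds vacuously; the identical argument yields the ``independence'' property promised after Corollary~\ref{crucrucor}.

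The only delicate point — the main obstacle — is upgrading plain (pointwise) continuity of $Y$ on $[0,1]$ to a second-order RM-code and thence to \emph{uniform} continuity without recourse to $\exists^{2}$; this is precisely where $\WKL$ (available from $\MUC$) enters, and it is handled exactly as in the proof of Corollary~\ref{crucrucor}. Everything else is routine, and, crucially, no step refers to the interpretation of ``$x\in A$'', which is what makes the result survive an arbitrary choice of the notion of set.
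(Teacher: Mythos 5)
Your proposal is correct and follows essentially the same route as the paper: derive $\neg(\exists^{2})$ and $\WKL$ from $\MUC$, then re-run the uniform-continuity argument from the proof of Corollary \ref{crucrucor} to conclude that no strongly countable $A\subset[0,1]$ can exist (the paper phrases this as such an $A$ being forced to be finite), so the statement is vacuously provable regardless of the meaning of `$x\in A$'. Your explicit observation that integer-valuedness plus uniform continuity contradicts surjectivity onto $\N$ just spells out what the paper leaves implicit.
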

\begin{proof}
Clearly, $\MUC$ implies $\neg(\exists^{2})$ and $\WKL$ (see \cite{simpson2}*{IV.2} for the latter). 
Now use the proof of Corollary \ref{crucrucor} to conclude that a countable set $A\subset [0,1]$ has at most one element.  
Hence, the conclusion of the corollary is vacuously true.  
\end{proof}
The previous corollary suggests that theorems about (strongly) countable sets like $\BW_{0}^{[0,1]}$ and $\MCT_{1}^{\net}$ must occupy a place between the base theory and $\WKL_{0}$ in terms of second-order consequences, regardless of the representation of sets. 
This theme is explored in (much) more detail in \cite{dagsamXI}.

\smallskip

\noindent
In conclusion, the above results lead to the following answers to (Q0)-(Q3).
\begin{enumerate}
\item[(A0)] Our notion of `countable set' yields {explosive} theorems, reaching up to $\SIX$ when combined with $\FIVE^{\omega}$ (Cor.\ \ref{BOEM}).  
Replacing `sequence' by `countable set' can result in loss of logical strength (Cor.\ \ref{Schor}). 
\item[(A1)] Our notion of `strongly countable set' does not yield explosive theorems by Footnote \ref{kul} and Corollary \ref{fluk}, in contrast to (A0).  
\item[(A2)] Nets with countable index sets can yield `explosive' convergence theorems by Corollary \ref{BOEM}, just like for uncountable index sets (see \cite{samph}*{\S3}).
\item[(A3)] The notion of (strongly) countable set yields nice, but perhaps expected, splittings in light of Theorems \ref{angel} and \ref{fluk}.
\end{enumerate}
%
We finish this section with an important remark on $\BW_{1}^{[0,1]}$ and $\MCT_{1}^{\net}$.
\begin{rem}[Hyperarithmetical analysis]\label{dilf}\rm
First of all, the notion of \emph{hyperarithmetical set} (\cite{simpson2}*{VIII.3}) gives rise the (second-order) definition of \emph{system/statement of hyperarithmetical analyis} (see e.g.\ \cite{monta2} for the exact definition), 
which includes $\Sigma_{1}^{1}$-$\textsf{CA}_{0}$ (see \cite{simpson2}*{VII.6.1}).  Montalb\`an claims in \cite{monta2} that \textsf{INDEC}, a special case of \cite{juleke}*{IV.3.3}, is the first `mathematical' statement of hyperarithmetical analysis.  The latter theorem by Jullien can be found in \cite{aardbei}*{6.3.4.(3)} and \cite{roosje}*{Lemma 10.3}.  

\smallskip

Secondly, a classical result by Kleene (see e.g.\ \cite{longmann}*{Theorem 5.4.1}) establishes that the following classes coincide: the \emph{hyperarithmetical} sets, the $\Delta_{1}^{1}$-sets, and the subsets of $\N$ computable (S1-S9) in $\exists^{2}$.  Hence, one expects a connection between hyperarithmetical analysis and extensions of $\ACAo$.  By way of example, $\ACAo+\QFAC^{0,1}$ is a conservative extension of $\Sigma_{1}^{1}$-$\textsf{CA}_{0}$ by \cite{hunterphd}*{Cor.\ 2.7}.  The latter is $\Pi_{2}^{1}$-conservative over $\ACA_{0}$ by \cite{simpson2}*{IX.4.4}. 

\smallskip

Thirdly, the monographs \cites{roosje, aardbei, juleke} are all `rather logical' in nature and $\textsf{INDEC}$ is the \emph{restriction} of a higher-order statement to countable linear orders in the sense of RM (\cite{simpson2}*{V.1.1}), i.e.\ such orders are represented by subsets of $\N$.  
In our opinion, the statements $\MCT_{1}^{[0,1]}$, $\cocode_{1}$, and $\BW_{1}^{[0,1]}$ are (much) more natural than \textsf{INDEC} as they are obtained from theorems of mainstream mathematics 
by a (similar to the case of $\textsf{INDEC}$) restriction, namely to strongly countable sets.  
Moreover, the system $\ACAo+\X$ where $\X$ is either $\MCT_{1}^{[0,1]}$, $\cocode_{1}$, or $\BW_{1}^{[0,1]}+\IND_{\Sigma}$, proves $\textsf{weak}$-$\Sigma_{1}^{1}$-$\textsf{CA}_{0}$, which is $\Sigma_{1}^{1}$-$\textsf{CA}_{0}$ with a uniqueness condition, by Theorem \ref{fluk}.  Hence, $\ACAo+\MCT_{1}^{\net}$ is a rather natural system \emph{in the range of hyperarithmetical analysis}, as it sits between $\RCAo+\textsf{weak}$-$\Sigma_{1}^{1}$-$\textsf{CA}_{0}$ and $\ACAo+\QFAC^{0,1}$.  
\end{rem}
\subsubsection{Nets with uncountable index sets}\label{dir2}
The results from Section \ref{dir1} are interesting in their own right, but also lead to improvements to results on nets with uncountable index sets from e.g.\ \cite{samnetspilot, samph}, as explored in this section.

\smallskip

With the gift of hindsight, we can now formulate the following result for $\BOOT$ and `obvious' generalisations of $\BW_{0}^{[0,1]}$ and $\MCT_{0}^{\net}$ from Theorem \ref{angel}. 
The equivalence $\MCT_{\net}^{[0,1]}\asa \BOOT$ was first proved in \cite{samph}*{\S3}; we refer to the latter or \cite{samnetspilot} for definitions of nets with index set $\N^{\N}$ in $\RCAo$. 
\begin{thm}\label{angels}
The system $\RCAo$ proves that the following are equivalent:
\begin{enumerate}
\renewcommand{\theenumi}{\roman{enumi}}
\item $\ACA_{0}$ plus: \emph{for \textbf{any} $A\subset [0,1]$ and $F:[0,1]\di [0,1]$, $\sup_{x\in A}F(x)$ exists},\label{ob12}
\item $\MCT_{\net}^{[0,1]}$, monotone convergence theorem for nets with index set $\N^{\N}$,\label{ob22}
\item $\BOOT$,\label{ob32}
\item {For any RM-closed set $C\subset [0,1]$ and $F:[0,1]\di [0,1]$, $\sup_{x\in C}F(x)$ exists}.\label{ob123}
\end{enumerate}
\end{thm}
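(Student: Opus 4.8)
The plan is to establish the cycle $\eqref{ob32}\di\eqref{ob123}\di\eqref{ob12}\di\eqref{ob32}$, and then obtain $\eqref{ob22}\asa\eqref{ob32}$ by citing \cite{samph}*{\S3}. As in the proof of Theorem \ref{angel}, I would first dispose of the case $\neg(\exists^{2})$: under this hypothesis all functions on $\R$ are continuous by \cite{kohlenbach2}*{\S3}, so the suprema in items \eqref{ob12}, \eqref{ob123} exist (a continuous function on $[0,1]$ attains its supremum over any subset, using $\WKL$-style arguments, or one simply notes the items become provable), $\BOOT$ reduces to $\ACA_0$ under $\ECF$ hence is available in the relevant weak sense, and the net in \eqref{ob22} is not even statable; so all items are (provably) equivalent in this case, and via the law of excluded middle $(\exists^{2})\vee\neg(\exists^{2})$ it suffices to prove the equivalences assuming $(\exists^{2})$. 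Under $(\exists^{2})$ we have $\ACA_0$ and may freely convert reals in $[0,1]$ to binary expansions, and elementhood in sets as per Definition \ref{openset} is decidable.

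For $\eqref{ob32}\di\eqref{ob123}$: given an RM-closed $C\subset[0,1]$ and $F:[0,1]\di[0,1]$, apply $\BOOT$ to the functional $Y(f,n)$ that searches, over $f$ coding a real $x\in C$ together with a rational approximation certificate, for a witness that $F(x)>q_n$ (enumerating rationals $q_n$); this yields a set encoding $\{\,n : (\exists x\in C)(F(x)>q_n)\,\}$, i.e.\ a Dedekind cut for $\sup_{x\in C}F(x)$, which $\ACA_0$ turns into a real. The point is that $\BOOT$ handles the unbounded existential quantifier "$\exists x\in C$" which $\ACA_0$ alone cannot. For $\eqref{ob123}\di\eqref{ob12}$: the $\ACA_0$ component is immediate since $(\exists^2)$ gives $\ACA_0$; and an arbitrary $A\subset[0,1]$ (via its characteristic function, Definition \ref{openset}) has a closure $\overline{A}$ which is RM-closed, and $\sup_{x\in A}F(x)=\sup_{x\in\overline A}\tilde F(x)$ where $\tilde F$ is an appropriate upper-semicontinuous envelope — but since we only assumed $F:[0,1]\di[0,1]$ with no continuity, the cleaner route is to note that item \eqref{ob123} applied to $C:=[0,1]$ together with the function $G(x):=\sup\{F(y): y\in A, |y-x| \text{ small}\}$ recovers $\sup_{x\in A}F(x)$; more simply, one observes that the closure $\overline{A^F}$ of the "subgraph" $\{x: F(x)>q\}$-data is closed, reducing the supremum over the arbitrary set $A$ to a supremum over a closed set. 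I expect this reduction from "arbitrary set" to "closed set" to be the main obstacle, since it is exactly the place where Definition \ref{openset} earns its keep, and some care with semicontinuity and the coding of $F$ is needed.

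For the closing implication $\eqref{ob12}\di\eqref{ob32}$: given $Y^2$, I would build a set $A\subset[0,1]$ and a function $F$ so that $\sup_{x\in A}F(x)$ encodes the set $X=\{n:(\exists f)(Y(f,n)=0)\}$ required by $\BOOT$. Concretely, following the pattern of \eqref{cent} in the proof of Theorem \ref{angel}, let $A$ be (a coded copy in $[0,1]$ of) the set of all $f\in\N^\N$, and define $F(f)$ to extract, from $f$, the finite information "$Y(f\restriction\text{-data})=0$ witnesses membership"; then a single supremum over $A$ of a suitably bit-packed $F$ yields simultaneously, for every $n$, whether $(\exists f)(Y(f,n)=0)$. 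The technical subtlety is encoding $\N^\N$-worth of witnesses into a single real supremum; one resolves this by working coordinate-wise — for each $n$ separately, $\sup_{f\in A}[\,Y(f,n)=0\,]$ is $1$ iff $n\in X$ — and then using $\ACA_0$ (available from $(\exists^2)$) together with $\QFAC^{1,0}$ to assemble $X$. Finally, the equivalence $\eqref{ob22}\asa\eqref{ob32}$ is quoted directly from \cite{samph}*{\S3}, completing the cycle. The main obstacle overall is the arbitrary-set-to-closed-set step in $\eqref{ob123}\di\eqref{ob12}$, with the witness-packing in $\eqref{ob12}\di\eqref{ob32}$ a close second.
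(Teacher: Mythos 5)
Your overall architecture (dispose of $\neg(\exists^{2})$ via excluded middle, quote \cite{samph}*{\S3} for \eqref{ob22}$\asa$\eqref{ob32}, and derive the suprema in \eqref{ob12} and \eqref{ob123} from $\BOOT$ via the set of rationals below the supremum) matches the paper, but the two steps you yourself flag as the main obstacles are genuinely broken as written. For \eqref{ob123}$\di$\eqref{ob12}: in $\RCAo+(\exists^{2})$ you cannot produce an RM-code for the closure $\overline{A}$ of an arbitrary third-order set $A$ (deciding whether a rational ball meets $A$ is precisely the kind of existential quantifier over $[0,1]$ that $\BOOT$ exists to handle, so this is circular), and since $F$ is arbitrary the supremum over $\overline{A}$ need not equal that over $A$; your envelope $\tilde F$ and the function $G(x)=\sup\{F(y):y\in A, |y-x|\ \textup{small}\}$ presuppose exactly the suprema whose existence is at issue. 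No closure argument is needed: since $(\exists^{2})$ decides `$x\in A$' (Definition \ref{openset}) and $F\geq 0$, set $H(x):=F(x)$ if $x\in A$ and $H(x):=0$ otherwise, and apply \eqref{ob123} to the RM-closed set $C:=[0,1]$; then $\sup_{x\in[0,1]}H(x)=\sup_{x\in A}F(x)$ for non-empty $A$. (The paper itself closes the circle differently: \eqref{ob123} gives $\ACA_{0}$ via $F(x):=x$, and both \eqref{ob12} and \eqref{ob123} give the monotone convergence theorem for increasing nets indexed by subsets of $[0,1]$, resp.\ by $2^{\N}$, whence $\BOOT$ by the proof of \cite{samph}*{Theorem 3.7}.)

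The second gap is your \eqref{ob12}$\di$\eqref{ob32}. Working coordinate-wise gives, for each $n$, the existence of $s_{n}=\sup_{x\in A}[Y(f_{x},n)=0]\in\{0,1\}$, but assembling $X=\{n:s_{n}=1\}$ requires the sequence $(s_{n})_{n\in\N}$ as a single object: the principle you invoke, $\QFAC^{1,0}$, has the wrong type (you would need $\QFAC^{0,1}$, which is \emph{not} part of $\RCAo$ and whose avoidance is one of the points of the paper, cf.\ Corollary \ref{Schor}), its matrix is not quantifier-free (it quantifies over $A\subseteq[0,1]$), and it is not arithmetical either, so $\ACA_{0}$ cannot form the sequence; ``for each $n$ the supremum exists'' is exactly $\BOOT$ minus its uniformity. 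The repair is a \emph{single} supremum exploiting monotonicity, as in the proof of Theorem \ref{angel}: take for $A$ the (coded) set of finite sequences $w$ of elements of $\N^{\N}$ and $F(w):=\r(g_{w})$ where $g_{w}(k)=1$ iff $(\exists i<|w|)(Y(w(i),k)=0)$; these reals increase along inclusion and the binary digits of their supremum give $X$ outright. Equivalently, and this is the paper's route, observe that \eqref{ob12} yields the monotone convergence theorem for nets indexed by $D\subseteq[0,1]$ and again cite the proof of \cite{samph}*{Theorem 3.7}. (Minor point: under $\neg(\exists^{2})$ the net in \eqref{ob22} is perfectly statable, since its index set is $\N^{\N}$ rather than a countable set requiring decidable elementhood; this is harmless as you treat \eqref{ob22} by citation.)
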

\begin{proof}
First of all, $\eqref{ob22}\asa \eqref{ob32}$ immediately follows from \cite{samph}*{Theorem 3.7}.
Note that in case $\neg(\exists^{2})$, all functions on $\R$ are continuous by Remark \ref{frorem}.  Hence, $\BOOT$ reduces to $\ACA_{0}$ while the second part of item \eqref{ob12} becomes provable given $\ACA_{0}$ using the usual interval-halving technique.  Thus, $\eqref{ob12}\asa \eqref{ob32}$ holds \emph{in this case}.

\smallskip

Secondly, in case $(\exists^{2})$, fix some $A\subset [0,1]$ and $F:[0,1]\di [0,1]$.  Note that `$x\in A$' is decidable in light of Definition~\ref{openset}.  Now fix $(q_{n})_{n\in\N}$,  an enumeration of all rationals in $\Q$, and 
use $\BOOT$ to obtain a set $X_{0}\subset \N$ such that
\[
n\in X_{0}\asa (\exists x\in \R)(\underline{x\in A\wedge F(x)>_{\R}q_{n}}),
\]
where the underlined formula is decidable modulo $\exists^{2}$.
To define the supremum $y=\sup_{x\in A}F(x)$, define $X\subset \Q$ as the set of those $q_{n}$ with $n\in X_{0}$.  Define $[y](0)$ as $\frac{1}{2}$ if $\frac{1}{2}\in X$, and $0$ otherwise. 
For the general case, $[y](n+1)$ is $[y](n)+\frac{1}{2^{n+1}}$ if the latter is in $X$, and $[y(n)]$ otherwise.   Thus, we have $\eqref{ob12}\leftarrow \eqref{ob32}$ in general.

\smallskip

Thirdly, by the proof of \cite{samph}*{Theorem 3.7}, $\BOOT$ follows from the monotone convergence theorem for nets indexed by $2^{\N}$ or $[0,1]$, assuming $\ACA_{0}$. 
However, a net in $[0,1]$ with index set $D\subset [0,1]$ has a supremum by item \eqref{ob12}, i.e.\ we obtain $\eqref{ob12}\di \eqref{ob32}$. 
Similarly, item \eqref{ob123} guarantees the existence of the supremum of such a net, i.e.\ we also have $\eqref{ob123}\di \BOOT$. 
To obtain $\ACA_{0}$ from item \eqref{ob123}, note that for $F(x):=x$, we obtain the supremum $\sup C$ of an RM-closed set $C\subset [0,1]$.
The latter property yields $\ACA_{0}$ by (the proof of) \cite{withgusto}*{Theorem 3.8} or \cite{simpson2}*{IV.2.11}.  
The proof of $\BOOT\di \eqref{ob123}$ follows in the same way as for $\BOOT\di \eqref{ob12}$.
\end{proof}
We note that item \eqref{ob123}, formulated with RM-codes for continuous functions $F$, exists in the RM-literature, namely \cite{simpson2}*{IV.2.11.2}.
Thus, the `small' change to \emph{arbitrary} third-order objects has a massive effect.  Of course, applying $\ECF$ to $\eqref{ob123}\asa \BOOT$, one obtains the equivalence between $\ACA_{0}$ and  \cite{simpson2}*{IV.2.11.2} in accordance with the theme of \cite{samph}.
Thus, the least we can say is that our definition of set as in Definition~\ref{openset} gives rise 
to nice equivalences and splittings.
We also have the following corollary where $\SUBNET_{0}$ states that a monotone 
net in $[0,1]$ indexed by $\N^{\N}$ has a monotone sub-net with a countable index set. 
\begin{cor}
$\ACAo$ proves $[\BOOT+\QFAC^{0,1}]\asa [\BOOT_{C}^{-} +\SUBNET_{0}] $.
\end{cor}
\begin{proof}
The forward direction follows from \cite{samnetspilot}*{Cor.\ 4.6}.  Indeed, the (proof of) the latter yields that, assuming $\QFAC^{0,1}$, a monotone net in $[0,1]$ indexed by $\N^{\N}$ and converging to $x$, has a \emph{sub-sequence} that also converges to $x$.  Using $(\exists^{2})$, this sequence is readily seen to be a net with a countable index set.  
\smallskip

For the reverse direction, to obtain item \eqref{ob22} from Theorem \ref{angels}, apply $\SUBNET_{0}$ and note that $\BOOT_{C}^{-}$ is equivalent to item \eqref{ob2} from Theorem \ref{angel}. 
Moreover Corollary~\ref{Schor2} supplies a modulus of convergence, which in turn yields $\QFAC^{0,1}$ by \cite{samph}*{Cor.\ 3.17}.  
\end{proof}
On a historical note, Root, a student of E.H.\ Moore, already studied when limits from MooreÕs General Analysis \cite{moorelimit2} can be replaced by limits given by sequences \cite{rootsbloodyroots}. Thus, the idea of replacing nets by sequences goes back more than a century, but generally requires the Axiom of Choice by \cite{samnetspilot}*{Cor.\ 4.6}.


\smallskip

We finish this section with a remark on the formulation of $\BW_{0}^{[0,1]}$.  
\begin{rem}[On formalisation]\label{dagki}\rm
A valid critical question is whether $\BW_{0}^{[0,1]}$ does really formalise an instance of the `Bolzano-Weierstrass theorem for countable sets'.  As always, this kind of
question is complicated and we therefore spend some time and effort on a detailed answer. Recall that $\BW_{0}^{[0,1]}$ as in Definition~\ref{konk} states:
\begin{center}
\emph{for countable $A\subset [0,1]$ and $F:[0,1]\di [0,1]$, the supremum $\sup_{x\in A}F(x)$ exists. }
\end{center}
First of all, $\BW_{0}^{[0,1]}$ states \emph{intuitively speaking} that the supremum exists for the set $F(A)=\{y\in [0,1] : (\exists x\in A)(F(x)=_{\R}y) \}$ for countable $A\subset [0,1]$ and $F:[0,1]\di [0,1]$.  
However, it is a common theme in RM that in weak systems certain sets or functions do not exist as mathematical objects, but only in a certain `virtual' or `comparative' sense (see e.g.\ \cite{simpson2}*{p.\ 392}) or represented by a formula (like open sets in e.g.\ \eqref{morg} above).  
Now, $\BW_{0}^{[0,1]}$ is equivalent to the following: 
\begin{center}
\emph{for countable $A\subset [0,1]$ and $Y:[0,1]\di \N$ injective on $A$, the set $\{ n\in \N: (\exists x\in A)(Y(x)=n) \}$ exists}.
\end{center}
Hence, there is no hope that in general $F(A)$ as above exists as a set in $\Z_{2}^{\omega}$.  In fact, this existence would yield $\BW_{0}^{[0,1]}$ by the previous.  For this reason, the formulation of $\BW_{0}^{[0,1]}$ avoids mentioning the set $F(A)$.  Moreover, since $F(A)$ does not exist as a set in $\RCAo$, the question whether it is countable (in the sense of Definitions~\ref{openset} and~\ref{standard}) does not really make sense.  

\smallskip

Secondly, the previous paragraph notwithstanding, we \emph{can} give meaning to statements like `the set $X_{\varphi}=\{x\in [0,1]:\varphi(x)\} $ is countable', namely as 
\be\label{barf}
(\exists Y:[0,1]\di \N)(\forall x, y\in X_{\varphi})( Y(x)=_{0}Y(y)\di x=_{\R}y   ), 
\ee
where `$x\in X_{\varphi}$' is just short for `$\varphi(x)$'.  Now consider $B$ and $F(w):=f_{w}$ from the proof of Theorem \ref{angel}.  One readily proves that `the set $F(B)=\{y\in [0,1]:(\exists w^{1^{*}})(F(w)=y)\} $ is countable' in the sense of \eqref{barf}.  Indeed, let $(\sigma_{n})_{n\in \N}$ be an enumeration of the finite binary sequences and define $Y_{0}(f)$ as $n+1$ if $n$ is the unique number such that $f=_{1}\sigma_{n}*00\dots$, and $0$ otherwise.  Clearly, we have 
\be\label{barf2}
(\forall f, g\in F(B))( Y_{0}(f)=_{0}Y(g)\di f=_{1}g  ), 
\ee
i.e.\ we can restrict $\BW_{0}^{[0,1]}$ to countable sets $A\subset [0,1]$ and $F:[0,1]\di [0,1]$ where $F(A)$ is countable in the sense of \eqref{barf}.
The resulting restriction is however less elegant than $\BW_{0}^{[0,1]}$.  Moreover, Theorem \ref{crucru} explains why one wants to avoid `sets' like $X_{\varphi}$: even in the
basic case of $\N_{\infty}$, one obtains $(\exists^{2})$, which implies $\ACA_{0}$.  

\smallskip

Thirdly, we can view $F(B)$ from the proof of Theorem \ref{angel} as a \emph{subset} of a countable set, where `inclusion' is interpreted in the `comparative' second-order sense.  
Indeed, one readily proves `$F(B)\subset D$', where $D=\{f \in 2^{\N} : (\exists n\in \N)(f=_{1}\sigma_{n}*00\dots) \}$ and where $\sigma_{n}$ is the $n$-th finite binary sequence.  
The set $D$ exists and is countable (following Definition \ref{standard}) given $\ACAo$.  To be absolutely clear, `$F(B)\subset D$' means the following:
\be\label{barf3}
(\forall f\in 2^{\N})\big[ (\exists w^{1^{*}}\in B)(F(w)=_{1}f)  \di f\in D  \big].
\ee
In light of the above, we can restrict $\BW_{0}^{[0,1]}$ to $A$ and $F$ such that $F(A)\subset D$ for some countable $D\subset [0,1]$.
This formulation is however again far less elegant.   

\smallskip

In conclusion, while $F(B)$ may not exist in weak systems, we can express the countability of $F(B)$ via \eqref{barf2} or \eqref{barf3}.  
Restricting to such sets does not change the equivalences in Theorem \ref{angel}.
Thus, $\BW_{0}^{[0,1]}$ does deal with the supremum of countable sets, but the technical details can be treacherous.  
\end{rem}
\subsection{K\"onig's lemma versus lemmas by K\"onig}\label{waycool}
We study well-known eponymous lemmas by K\"onig from \cite{koning147,koning26} for (strongly) countable sets (of reals) following Definition~\ref{standard}.  We show that these lemmas imply $\NBI$ or $\cocode_{1}$ and are hence unprovable in $\Z_{2}^{\omega}$, 
but provable in $\Z_{2}^{\Omega}$, i.e.\ without additional choice axioms.   In contrast to the 
(second-order) version involving trees, our lemmas based on Definition~\ref{standard} do not imply $\ACA_{0}$.  
Nonetheless, we obtain some interesting reversals, including \emph{splittings} $\cocode_{0}\asa [\Korg_{0}+\CBN]$ over $\RCAo$, where $\Korg_{0}$ is a lemma due to K\"onig formulated using Definition \ref{standard}.
We also discuss the history of these lemmas, as this would seem illuminating in light of \cite{simpson2}*{p.\ 125, Notes for \S III.3}.  

\smallskip

First of all, let \emph{K\"onig's tree lemma} be the statement \emph{every infinite finitely branching tree has a path}.  When formulated in $\L_{2}$, the latter is equivalent to $\ACA_{0}$ by \cite{simpson2}*{III.7.2}.  
Simpson refers to \cite{koning147} as the original source for K\"onig's tree lemma in \cite{simpson2}*{p.\ 125}, but \cite{koning147} does not mention the word `tree' (i.e.\ the word `Baum' in German).  In fact, K\"onig's original (graph theoretic) lemma from \cite{koning147} is as follows.
\begin{princ}[K\"onig's infinity lemma for graphs]\label{KIL}
If a countably infinite graph $G$ can be written as countably many non-empty finite sets $E_{1}, E_{2}, \dots$ such that each point in $E_{n+1}$ is connected to a point in $E_{n}$ via an edge, then $G$ has an infinite path $a_{1}a_{2}\dots$ such that $a_{n}\in E_{n}$ for all $n\in \N$. 
\end{princ}
The original version, introduced a year earlier in \cite{koning26}, is formulated in the language of set theory as follows, in both \cite{koning26, koning 147}.  
\begin{princ}[K\"onig's infinity lemma for sets]\label{KIL2}
Given a sequence $E_{0}, E_1, \dots$ of finite non-empty sets and a binary relation $R$ such that for any $x\in E_{n+1}$, there is at least one $y\in E_{n}$ such that $yRx$.
Then there is an infinite sequence $(x_{n})_{n\in \N}$ such that for all $n\in \N$, $x_{n}\in E_{n}$ and $x_{n}Rx_{n+1}$.
\end{princ}
The names \emph{K\"onig's infinity lemma} and \emph{K\"onig's tree lemma} are used in \cite{wever} which contains a historical account of these lemmas, as well 
as the observation that they are equivalent; the formulation involving trees apparently goes back to Beth around 1955 in \cite{bethweter}, as also discussed in detail in \cite{wever}.
When formulated in set theory, K\"onig's infinity lemma is equivalent to a fragment of the Axiom of Choice (\cite{levy1}*{p.\ 298}) over $\ZF$, and is (strictly) implied by Ramsey's theorem (\cite{fortru}).

\smallskip
 
Now, since `countable subset of $\R$' is a \emph{third-order} notion, it makes sense for Simpson to study the \emph{second-order} K\"onig's tree lemma, although this begs the question what the logical strength
of K\"onig's infinity lemma(s) is.   By the below, K\"onig's infinity lemmas \emph{formulated for countable subsets of $\R$ as in Definition \ref{standard}} are not provable in $\Z_{2}^{\omega}$, in contrast to the $\L_{2}$-version of K\"onig's tree lemma of course is (provable in $\ACA_{0}$).  
Nonetheless, we can obtain nice equivalences for certain versions of these lemmas, showing in particular that the Axiom of Choice is not needed  (as everything is provable in $\Z_{2}^{\Omega}$).  

\smallskip

Next, we discuss some conceptual motivations for our study of the infinity lemmas.  The following quote by K\"onig constitutes motivation and evidence that graph theory was intended to be infinitary.      
\begin{quote}
Diese Bemerkung ist wichtig, da, wenn man sie einmal angenommen hat, nichts im Wege steht die ``Sprache der Graphen'' auch dann zu nuetzen, wenn die Mengen [\dots] nicht endlich, ja sogar von beliebig grosser Machtigkeit sind. (\cite{koning16}*{p.\ 460})
\end{quote}
The final sentence states that graphs of any cardinality can be studied in graph theory.  
Moreover, we note that K\"onig's infinity lemma is introduced in \cite{koning147} as a graph-theoretic formulation of another theorem from \cite{koning26}.  In both the French (\cite{koning26}*{\S3}) and German formulation (\cite{koning147}*{\S1}), the word `sequence' is used in the conclusion, i.e.\ an infinite path is a sequence of elements.  By contrast, the antecedent is always formulated using countable sets.  Thus, the above K\"onig's infinity lemmas are close to the original theorems `as they stand', i.e.\ without enrichment or modification.     

\smallskip

In light of these observations, we define $\Korg_{0}$ as follows where we interpret `finite set' as a set $A\subset\R$ for which there exists $Y:\R\di \N$ which is injective and bounded on $A$, i.e.\ just like for $\fin_{0}$ introduced in Section \ref{bumm}.  As is clear from its proof, Theorem \ref{kiro} (and Corollary \ref{Qutree}) still goes through if we replace item \eqref{kriol} in Principle \ref{KILreal} by `for all $n\in \N$ there exists a non-empty finite sequence listing the elements of $V_{n}$'.  This endows the below results with a certain robustness, an important concept in RM according to Montalb\'an (\cite{montahue}).
\begin{princ}[$\Korg_{0}$]\label{KILreal}
Let $G=(V, E)$ be a graph where $V=\cup_{n\in \N}V_{n}\subset \R$ and 
\begin{enumerate}
 \renewcommand{\theenumi}{\alph{enumi}}
\item the vertex set $V$ is countable,
\item each $V_{n}$ is non-empty and finite,\label{kriol}
\item each vertex in $V_{n+1}$ is connected to a vertex in $V_{n}$ via an edge in $E$.  
\end{enumerate}
Then there is a sequence $(v_{n})_{n\in \N}$ such that $v_{n}\in V_{n}$ and $(v_{n}, v_{n+1})\in E$ for $n\in \N$. 
\end{princ}
We let $\Korg_{1}$ be $\Korg_{0}$ restricted to strongly countable sets $V$.
In our opinion, the principles $\Korg_{i}$ for $i=0,1$ are therefore close to K\"onig's orginal interpretation/meaning.
We now have the following theorem. 
\begin{thm}\label{kiro}
The system $\RCAo$ proves $\Korg_{1}\di \NBI$.
\end{thm}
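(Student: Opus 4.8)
The plan is to use the "excluded middle trick" to reduce to the case $(\exists^{2})$, and then, working under $(\exists^{2})$, to build from an arbitrary $Y:[0,1]\di\N$ a strongly countable set together with a nested sequence of finite sets whose only infinite path would encode a value omitted by $Y$, so that $\Korg_{1}$ applied to this data yields exactly the disjunction defining $\NBI$. First I would note that when $\neg(\exists^{2})$ holds, every $Y:[0,1]\di\N$ is continuous (by \cite{kohlenbach2}*{\S3}), so $\NBI$ is trivially true: indeed a continuous $Y$ restricted to $[0,1]$ is far from injective, giving the first disjunct of $\NBI$. Hence by considering $(\exists^{2})\vee\neg(\exists^{2})$ it suffices to prove $\Korg_{1}\di\NBI$ over $\RCAo+(\exists^{2})$, and from now on we may freely convert reals to binary expansions, decide elementhood in sets as in Definition~\ref{openset}, and use $\mu^{2}$.

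The main construction: fix $Y:[0,1]\di\N$ and suppose, towards the second disjunct failing, that $(\forall N\in\N)(\exists x\in[0,1])(Y(x)=N)$, i.e.\ $Y$ is a surjection. Then the set $A:=\{x\in[0,1]: (\exists z\in[0,1])(z<_{\R}x \wedge Y(z)=Y(x))\di \text{false}\}$ — more precisely, pick for each $n$ the $<_{\R}$-least (in the sense of least binary code found by $\mu^{2}$) real $x_{n}$ with $Y(x_{n})=n$ — and let $A=\{x\in[0,1]:(\exists n)(x=_{\R}x_{n})\}$, which is strongly countable via the obvious bijection induced by $Y$ on $A$. Now I build the data for $\Korg_{1}$: let $E_{n}$ be a (strongly countable) collection of finite binary strings of length $n$ that are "consistent" with lying inside the closure-type behaviour of $A$, together with an edge relation given by end-extension. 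The point is to arrange that an infinite path $a_{1}a_{2}\dots$ through the $E_{n}$'s determines a real $y\in[0,1]$, and to rig the $E_{n}$'s so that every such $y$ must satisfy $y\notin A$ while still $y\in[0,1]$ — a diagonalisation against the enumeration $(x_{n})$. Applying $\Korg_{1}$ to this (strongly countable) graph produces such a path, hence a real $y$ with $y\ne_{\R}x_{n}$ for all $n$; but $Y(y)=m$ for some $m$, and $Y(x_{m})=m$ too, giving $x, y$ with $Y(x)=Y(y)$ and $x\ne_{\R}y$, i.e.\ the first disjunct of $\NBI$. Thus in all cases $\NBI$ holds.

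The hard part will be verifying that the graph $G$ and the sets $E_{n}$ are genuinely \emph{strongly} countable in the sense of Definition~\ref{standard} (a surjection onto $\N$, not merely an injection), since $\Korg_{1}$ — unlike $\Korg_{0}$ — requires this, and a naive construction tends to produce sets that are only countable; one likely needs to pad the $E_{n}$'s with dummy elements or interleave them to force surjectivity of the indexing, which is exactly the kind of bookkeeping that $\cocode_{1}$/$!\QFAC^{0,1}$-style reasoning (available here via $\ACAo$ in the surrounding results) is designed to handle. A secondary subtlety is ensuring the infinite path really yields a \emph{real in $[0,1]$} distinct from every $x_{n}$: the standard move is to have level-$n$ strings avoid the length-$n$ prefix of $x_{n}$'s binary expansion, so that the limit real differs from each $x_{n}$ by construction, but one must check the finite sets stay non-empty and each point of $E_{n+1}$ end-extends a point of $E_{n}$. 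Modulo these verifications, which are routine given $(\exists^{2})$ and $\mu^{2}$, the implication $\Korg_{1}\di\NBI$ follows, and by Theorem~\ref{momo} one then also recovers the reformulations of Cantor's theorem for strongly countable sets.
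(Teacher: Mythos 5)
Your proposal has a genuine gap, and it sits exactly where the logical strength of the theorem lives. After assuming surjectivity of $Y$, you ``pick for each $n$ the least (in the sense of least binary code found by $\mu^{2}$) real $x_{n}$ with $Y(x_{n})=n$''. But $\mu^{2}$ only performs searches over $\N$; deciding whether a finite binary string extends to some $f\in 2^{\N}$ with $Y(\r(f))=n$ is a quantification over Cantor space applied to an arbitrary third-order $Y$, which $(\exists^{2})$ cannot resolve. Extracting such a sequence of witnesses from a surjection $[0,1]\di\N$ is an instance of $\QFAC^{0,1}$-style choice (essentially the content of the $\cocode$ principles), precisely what is \emph{not} available in $\RCAo+(\exists^{2})$ and not provable in $\Z_{2}^{\omega}$. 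Worse, if you \emph{did} have the sequence $(x_{n})_{n\in\N}$, you would not need $\Korg_{1}$ at all: Cantor's theorem for sequences (\cite{simpson2}*{II.4.9}, provable in $\RCA_{0}$) already yields $y$ different from every $x_{n}$, and then $Y(y)=m=Y(x_{m})$ with $y\ne_{\R}x_{m}$ gives the first disjunct of $\NBI$ directly. So your diagonalising tree makes the appeal to $\Korg_{1}$ vacuous, while the real work has been smuggled into an illegitimate selection step.

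The paper's proof runs in the opposite direction and lets $\Korg_{1}$ itself do the witness extraction. Assume $\neg\NBI$, so the given $Y:[0,1]\di\N$ is a bijection and $[0,1]$ is \emph{strongly countable} outright (your worry about arranging strong countability of the graph disappears: it is witnessed by $Y$). Take $G=[0,1]$ with $xRy$ when the $Y$-values differ by one, and $E_{n}:=\{x\in[0,1]:Y(x)=n\}$, which is a non-empty finite (indeed singleton) set for each $n$. Applying $\Korg_{1}$ produces a path $(x_{n})_{n\in\N}$ with $x_{n}\in E_{n}$, i.e.\ a sequence enumerating all of $[0,1]$, and \cite{simpson2}*{II.4.9} then gives the contradiction. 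No excluded-middle case split and no choice principle is needed; the only nontrivial existential step, passing from the levels $E_{n}$ to an actual sequence hitting them, is exactly what $\Korg_{1}$ supplies. If you want to salvage your direct (non-contrapositive) presentation, you must feed $\Korg_{1}$ the sets $E_{n}=\{x:Y(x)=n\}$ themselves rather than pre-selected representatives, which is just the paper's argument in disguise.
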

\begin{proof}
Suppose $\neg\NBI$ and note that $[0,1]$ is strongly countable.  The graph $G=(V, E)$ is defined as $V=[0,1]$ with incidence relation $xEy $ defined as $Y(y)=Y(x)+1 \vee Y(x)=Y(y)+1$, where $Y:[0,1]\di \N$ is the bijection provided by $\neg\NBI$.  
Define $V_{n}:=\{ x\in [0,1]:Y(x)=n\}$ and note that this collection satisfies all the requirements from $\Korg_{1}$, in particular for $y\in V_{n+1}$, we have $xEy$ for $x\in V_{n}$.
Hence, there is a path $(x_{n})_{n\in \N}$ through $G$.  Note that this sequence enumerates $[0,1]$ and use \cite{simpson2}*{II.4.9} to obtain a contradiction. 
\end{proof}
The following corollary provides a nice answer to (Q0).  We believe the use of $\CBN$ to be essential in the second part. 
\begin{cor}\label{wood}
The system $\RCAo$ proves $\Korg_{1}\asa  \cocode_{1} $ and $\cocode_{0}\asa [\Korg_{0}+\CBN]$.
The system $\RCAo+\NCC$ proves $\cocode_{0}\asa [\Korg_{0}+\CBN'+\fin_{0}]$.
\end{cor}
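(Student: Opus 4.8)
The plan is to reduce each equivalence to results already established, chiefly Theorem~\ref{angel}, Theorem~\ref{kiro}, Corollary~\ref{useff}, and the excluded-middle trick. As always, we may assume $(\exists^{2})$, since under $\neg(\exists^{2})$ all functions on $\R$ are continuous, every (strongly) countable subset of $[0,1]$ is empty by the argument in the second paragraph of the proof of Theorem~\ref{crucru}, and all the principles in sight ($\Korg_{i}$, $\cocode_{i}$, $\CBN$, $\CBN'$, $\fin_{0}$) become vacuously true. So fix $(\exists^{2})$, hence $\ACA_{0}$, and freely convert between reals in $[0,1]$ and binary representations.

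For the first equivalence $\Korg_{1}\asa\cocode_{1}$: the forward direction $\Korg_{1}\di\NBI$ is Theorem~\ref{kiro}, and since $\NBI$ together with $(\exists^{2})$ yields (by the methods of \cite{dagsamX}, in the same spirit as $\cocode_{0}$ from $\NIN$) the listing principle $\cocode_{1}$ for strongly countable sets, one gets $\Korg_{1}\di\cocode_{1}$; alternatively, one imitates the proof of $\eqref{ob2}\di\eqref{ob3}$ in Theorem~\ref{angel} for a strongly countable index set to extract the enumeration directly from the path. For the reverse $\cocode_{1}\di\Korg_{1}$: given the data of $\Korg_{1}$ with strongly countable $G$ and strongly countable levels $E_{n}$, apply $\cocode_{1}$ to enumerate $G$ as a sequence; now each $E_{n}$ becomes a genuine finite set of naturals (indices into the enumeration), the incidence relation becomes arithmetical, and the classical $\L_{2}$ proof of K\"onig's tree lemma in $\ACA_{0}$ (\cite{simpson2}*{III.7.2}) goes through. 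The main subtlety here is ensuring that after enumeration the levels $E_n$ are uniformly finite and that the relevant tree is infinite and finitely branching — this uses the injection witnessing strong countability of $G$ to bound $|E_n|$.

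For the splitting $\cocode_{0}\asa[\Korg_{0}+\CBN]$ over $\RCAo$: the direction $\cocode_{0}\di[\Korg_{0}+\CBN]$ is easy — $\cocode_{0}\di\CBN$ is already part of Theorem~\ref{angel} (the ``trimming'' argument), and $\cocode_{0}\di\Korg_{0}$ follows exactly as $\cocode_{1}\di\Korg_{1}$ above, now using a mere injection on the countable $G$ to turn levels into finite sets of naturals before applying $\ACA_{0}$. The reverse $[\Korg_{0}+\CBN]\di\cocode_{0}$ is where the real content lies: given countable $A\subset[0,1]$ with injection $Y$, one must manufacture the hypotheses of $\Korg_{0}$. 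The idea is to use $Y$ to build a graph/level structure on $A$ (say $E_{n}$ collects the element with $Y$-value $n$ when it exists, padded so the $E_n$ are non-empty finite and suitably connected), so that a path through it produces a sequence hitting the elements of $A$ in $Y$-order; where $A$ has ``gaps'' in the range of $Y$ one needs $\CBN$ (via the pairwise-distinct sequence it provides) to re-index and fill in, obtaining a genuine enumeration $(x_n)_{n\in\N}$ of $A$, i.e.\ $\cocode_{0}$. I expect this construction — getting the level sets non-empty and finite while keeping the path faithful to $A$, and seeing precisely where $\CBN$ is needed — to be the main obstacle; it is essentially the observation flagged in the corollary that ``the use of $\CBN$ is essential''.

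Finally, $\cocode_{0}\asa[\Korg_{0}+\CBN'+\fin_{0}]$ over $\RCAo+\NCC$ follows by combining the previous equivalence with Corollary~\ref{useff}, which already gives $\cocode_{0}\asa\CBN\asa[\CBN'+\fin_{0}]$ over $\RCAo+\NCC$: from $\Korg_{0}+\CBN'+\fin_{0}$ one recovers $\CBN$ (hence $\cocode_{0}$) by Corollary~\ref{useff}, and conversely $\cocode_{0}$ delivers $\Korg_{0}$, $\CBN'$, and $\fin_{0}$ directly (the last two being immediate weakenings). Thus no new work beyond bookkeeping is needed for this last part.
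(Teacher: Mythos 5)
Your skeleton (excluded-middle setup, $\cocode_{1}\di\Korg_{1}$ via enumeration plus the classical $\ACA_{0}$ argument, $\cocode_{0}\di[\Korg_{0}+\CBN]$ by trimming and enumerating, and the third part via Corollary \ref{useff}) matches the paper, but two steps have genuine gaps. First, for $\Korg_{1}\di\cocode_{1}$ your primary route goes through Theorem \ref{kiro} and then asserts that $\NBI$ together with $(\exists^{2})$ yields $\cocode_{1}$ ``by the methods of \cite{dagsamX}''. No such implication is available: $\NBI$ is a purely negative statement about $[0,1]$ with no coding or choice content, whereas $\cocode_{1}$ is equivalent to $!\QFAC^{0,1}$ over $\ACAo$ (Theorem \ref{fluk}); the paper's remark in Section \ref{bumm} is only that \emph{proofs} deriving $\NBI$ can usually be adapted to derive $\cocode_{1}$, not that the implication holds. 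The intended argument is exactly such an adaptation: rerun the construction of Theorem \ref{kiro} with $[0,1]$ replaced by an arbitrary strongly countable $A\subseteq\R$ with bijection $Y$; the levels $E_{n}=\{x\in A:Y(x)=n\}$ give an instance of $\Korg_{1}$, and the resulting path $(x_{n})_{n\in\N}$ satisfies $Y(x_{n})=n$, i.e.\ it enumerates $A$, which is $\cocode_{1}$ directly (no detour through Cantor's theorem). Your ``alternative'' via the net construction of Theorem \ref{angel} gestures at this but is left unspecified.

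Second, and more seriously, the direction $[\Korg_{0}+\CBN]\di\cocode_{0}$ -- which you yourself flag as ``the main obstacle'' -- is left unresolved, and your sketch uses $\CBN$ backwards: $\CBN$ does not \emph{provide} a pairwise-distinct sequence in $A$, it \emph{requires} one as hypothesis and concludes that $A$ is strongly countable. Moreover, the proposed graph construction on a merely countable $A$ is problematic precisely because the levels $E_{n}$ may be empty, so there is no instance of $\Korg_{0}$ to apply without already having an enumeration. The paper needs no new construction here: $\Korg_{0}$ trivially implies $\Korg_{1}$ (a strongly countable graph is countable), hence $\cocode_{1}$ by the first part, and $[\cocode_{1}+\CBN]\asa\cocode_{0}$ is precisely the equivalence of items \eqref{ob4} and \eqref{ob5} of Theorem \ref{angel}, which you already cite for other purposes. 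With that substitution both missing directions close, and the rest of your argument goes through as written.
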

\begin{proof}
For the first part, similar to the proof of Theorems \ref{crucru} and \ref{angel}, we may assume $(\exists^{2})$, as the equivalence trivially holds in case of $\neg(\exists^{2})$, following Remark~\ref{flop}.
The implication $\Korg_{1}\di \cocode_{1}$ follows by repeating the proof of the theorem for $[0,1]$ replaced by a strongly countable set $A\subseteq \R$.
For the reverse implication, let $G$ be as in $\Korg_{1}$ and let $(y_{n})_{n\in \N}$ be a sequence such that $x\in G\asa (\exists n\in \N)(x=_{\R}y_{n})$ for any $x\in \R$, as provided by $\cocode_{1}$. 
We can now carry out the `usual' proof of K\"onig's tree lemma as $(\exists x\in G)A(x)$ is equivalent to $(\exists n\in \N)A(y_{n})$.    
Indeed, expressions like `an element of $G$ has infinitely many successors in $G$' are now arithmetical, which $\ACA_{0}$ can handle as in the usual proof of K\"onig's tree lemma (from $\L_{2}$). 

\smallskip

For the second part, we may assume $(\exists^{2})$ as in the previous paragraph.  
Since $\Korg_{0}\di \Korg_{1}\di \cocode_{1}$, the reverse implication is immediate. 
For the forward implication, $\CBN$ readily follows by using $\exists^{2}$ to `trim' duplicate reals from the sequence provided by $\cocode_{0}.$
The usual proof of K\"onig's lemma (in $\ACA_{0}$) yields $\Korg_{0}$ after converting the countable set into a sequence using $\cocode_{0}$.
The third part is now immediate by Corollary \ref{useff}.
\end{proof}
By the following, $\Korg_{1}$ is provable without using countable choice.  Note that (the proof of) Corollary \ref{surplus} implies that the exact definition of set also does not matter for the second part. 
\begin{cor}\label{Qutree}
The system $\RCAo+\DCA$ proves $ \Korg_{1}$, while $\RCAo+\Korg_{1}$ cannot prove $\ACA_{0}$.
\end{cor}
\begin{proof}
It is show in \cite{dagsamX}*{\S3} that $\QFAC^{0,1}\di \DCA\di \cocode_{1}$ over $\RCAo$. 
Since $\ECF$ converts $\QFAC^{0,1}$ to $\QFAC^{0, 0}$, the former does not imply $\ACA_{0}$.
\end{proof}
We now turn to Principle \ref{KIL2}, formalised as follows.  A binary relation $R$ on reals is given by a characteristic function $F_{R}:\R^{2}\di \{0,1\}$, i.e.\ $xRy\equiv F_{R}(x, y)=1$. 
\begin{princ}[$\Korg_{2}$]\label{KIL2real}
Let $(E_{n})_{n\in \N}$ be a sequence of sets in $\R$ and let $R$ a binary relation on reals such that for all $n\in\N$ we have:
\begin{itemize}
\item the set $E_{n}$ is finite and non-empty,
\item for any $x\in E_{n+1}$, there is at least one $y\in E_{n}$ such that $yRx$.
\end{itemize}
Then there is a sequence $(x_{n})_{n\in \N}$ such that for all $n\in \N$, $x_{n}\in E_{n}$ and $x_{n}Rx_{n+1}$.
\end{princ}
As expected, we have $\Korg_{2}\di \cocode_{1}$ but no reversal is known.  Moreover, the use of extra induction in the proof of $\Korg_{2}$ seems necessary.  
Let $\Pi_{2}^{1}$-$\IND^{\omega}$ be the induction axiom for $\Pi_{2}^{1}$-formulas with arbitrary parameters.
\begin{thm}\label{wood2}
 $\RCAo$ proves $\Korg_{2}\di \cocode_{1}$ and $\RCAo+\ACA_{0}+\Pi_{2}^{1}\textup{-}\IND^{\omega}+\QFAC^{0,1}$ proves $\Korg_{2}$.  
\end{thm}
\begin{proof}
To obtain $\Korg_{2}\di\cocode_{1}$, let $A\subset \R$ be strongly countable as witnessed by the bijection $Y:A\di \N$. 
Define $E_{n}:=\{x\in A: Y(x)= n\}$ and note that this set is non-empty and finite for any $n\in \N$.  In fact, $E_{n}$ has exactly one element by definition.  
Define the binary relation $yRx$ as $Y(x)+1=_{0} Y(y)$ and note that for $y\in E_{n+1}$, there is (exactly one) $x\in E_{n}$ such that $yRx$.
Hence, $\Korg_{2}$ provides $(x_{n})_{n\in \N}$ such that $x_{n}\in E_{n} $ and $x_{n+1}Rx_{n}$ for any $n\in \N$.
By definition, $Y(x_{n})=n$ for any $n\in \N$ and $\cocode_{1}$ follows.  

\smallskip

For the second part, we shall prove the following for all $n\in \N$:
\be\label{truf}
(\exists w^{1^{*}})\big[|w|=n \wedge (\forall i<|w|)(w(i)\in E_{i})\wedge (\forall j<|w|-1)(w(j)Rw(j+1))   \big].
\ee
Now apply $\QFAC^{0,1}$ to obtain $(w_{n})_{n\in \N}$.  With the latter sequence, one readily\footnote{Let $\sigma \in T$ hold in case $\sigma$ is an initial segment of $w_{n}$ for some $n\in \N$.} builds a finitely branching tree (in the sense of second-order RM).  Now use K\"onig's tree lemma (provided by $\ACA_{0}$; see \cite{simpson2}*{III.7.2}) to obtain the path required by $\Korg_{2}$.  To prove \eqref{truf} for all $n\in \N$, consider the formula $\varphi(n)$ defined as
\[
(\forall x\in E_{n})(\exists w^{1^{*}})\left[\begin{array}{c} |w|=n+1 \wedge (\forall i<|w|)(w(i)\in E_{i})\wedge x=w(n)\\
\wedge  (\forall j<|w|-1)(w(j)Rw(j+1)  )\end{array}\right].
\]
Now use $\Pi_{2}^{1}$-$\IND^{\omega}$ to prove $(\forall n\in \N)\varphi(n)$, which implies \eqref{truf} for all $n\in \N$.
\end{proof}
By \cite{dagsamX}*{Theorem 3.1}, $\Z_{2}^{\omega}+\QFAC^{0,1}$ cannot prove $\NIN$, which is established via 
a model \textbf{P} of the former in which the latter is false. It can be shown that this model (or a variation thereof) satisfies the induction axiom as in the previous theorem (and more). 
As a result, $\Korg_{2}$ cannot imply $\NIN$ (or $\cocode_{0}$ for that matter). 

\smallskip

As a preliminary conclusion, our results concerning K\"onig's various lemmas are somewhat more complicated than those in Section \ref{LP}.  
On one hand, Corollary \ref{wood} provides the nice equivalence $\Korg_{1}\asa \cocode_{1}$, but we do not know how to obtain an equivalence between $\Korg_{0}$ and $\cocode_{0}$.  
On the other hand, the splittingd in Corollary \ref{wood} and the results in Theorem \ref{wood2} are of course quite nice.  

\smallskip

Finally, the restriction of K\"onig's tree lemma to \emph{binary} trees yields the second Big Five system, called $\WKL_{0}$.  We briefly discuss the latter and associated equivalences.  

\smallskip

Now, $\WKL_{0}$ is equivalent to the Heine-Borel theorem for \emph{sequences} of intervals covering $[0,1]$ (\cite{simpson2}*{IV.1}).  
The following version is closer to Borel's original version from \cite{opborrelen2}, as discussed in Remark \ref{horgku}.
\begin{princ}[$\HBC_{0}$]
For countable $A\subset \R^{2}$ with $(\forall x\in [0,1])(\exists (a, b)\in A)(x\in (a, b))$, there are $(a_{0}, b_{0}), \dots (a_{k}, b_{k})\in A$ with $(\forall x\in [0,1])(\exists i\leq k)(x\in (a_{i},b_{i} ))$.
\end{princ}
We now have the following theorem.  Similar results hold for e.g.\ Vitali's covering theorem (see e.g.\ \cite{simpson2}*{X.1}).
\begin{thm}\label{Schor3}
The system $\Z_{2}^{\omega}+\QFAC^{0,1}$ cannot prove $\HBC_{0}$, the latter does not imply $\WKL_{0}$ over $\RCAo$, and the same for any sentence not provable in $\RCA_{0}$.  
\end{thm}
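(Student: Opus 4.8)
The plan is to follow the template of Corollary~\ref{Schor}: everything reduces to the single implication $\RCAo\vdash \HBC_{0}\di \NIN$, after which the first claim follows because $\Z_{2}^{\omega}+\QFAC^{0,1}$ does not prove $\NIN$ by \cite{dagsamX}*{\S3}, and the second claim — together with the clause about arbitrary $\L_{2}$-sentences — follows from the $\ECF$-interpretation exactly as in the proof of Corollary~\ref{Schor}. So the only real work is to prove $\HBC_{0}\di \NIN$ over $\RCAo$.

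For this I would use the `excluded middle trick': in case $\neg(\exists^{2})$ the principle $\NIN$ holds trivially by (the proof of) Theorem~\ref{crucru}, so we may assume $(\exists^{2})$, and in particular elementhood for sets as in Definition~\ref{openset} is decidable. Suppose towards a contradiction that $\neg\NIN$, i.e.\ fix an injection $Y:[0,1]\di \N$. For $x\in[0,1]$ put $r_{x}:=\tfrac{1}{2^{Y(x)+3}}$ and let $I_{x}:=(x-r_{x},x+r_{x})$; using $(\exists^{2})$, the collection $A\subset \R^{2}$ of all these intervals is a genuine (third-order) set, since $(a,b)\in A$ holds exactly when $\tfrac{a+b}{2}\in[0,1]$ and $b-a=_{\R}\tfrac{1}{2^{Y((a+b)/2)+2}}$. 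An interval in $A$ is determined by its midpoint and $Y$ is injective, so $Y'(a,b):=Y(\tfrac{a+b}{2})$ is a function $\R^{2}\di\N$ that is injective on $A$, i.e.\ $A$ is countable. Since $x\in I_{x}\in A$ for every $x\in[0,1]$, the set $A$ satisfies the hypothesis of $\HBC_{0}$, which therefore provides finitely many $I_{x_{0}},\dots,I_{x_{k}}\in A$ covering $[0,1]$; we may assume these are pairwise distinct, so that $Y(x_{0}),\dots,Y(x_{k})$ are distinct natural numbers. But then the total length of this finite subcover satisfies
\[
\textstyle\sum_{i\leq k} 2 r_{x_{i}}=\sum_{i\leq k}\tfrac{1}{2^{Y(x_{i})+2}}\leq \sum_{n=0}^{\infty}\tfrac{1}{2^{n+2}}=\tfrac12<1,
\]
while a finite union of open intervals of total length below $1$ cannot cover $[0,1]$, which is elementary and provable already in $\RCA_{0}$. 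This contradiction establishes $\RCAo\vdash \HBC_{0}\di\NIN$.

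The remaining claims then go through as for Corollary~\ref{Schor}. By \cite{dagsamX}*{\S3}, $\Z_{2}^{\omega}+\QFAC^{0,1}$ does not prove $\NIN$, hence by the implication just proved it does not prove $\HBC_{0}$ either. For the rest, let $A\in\L_{2}$ be a sentence with $\RCA_{0}\nvdash A$, e.g.\ $\WKL$. If $\RCAo+\HBC_{0}\vdash A$, then applying $\ECF$ (Remark~\ref{ECF}) yields $\RCA_{0}+[\HBC_{0}]_{\ECF}\vdash A$ up to language; but $[\HBC_{0}]_{\ECF}$ is vacuously true over $\RCA_{0}$, because under $\ECF$ every countable subset of $\R^{2}$ is empty: if such a set contained a point, it would contain a ball around that point on which the witnessing injection, now continuous and $\N$-valued hence locally constant, is constant, contradicting injectivity — exactly as in the second paragraph of the proof of Theorem~\ref{crucru} — so the hypothesis of $\HBC_{0}$ can never be met. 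Hence $\RCA_{0}\vdash A$, a contradiction; in particular $\RCAo+\HBC_{0}\nvdash \WKL_{0}$.

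The only genuinely new point is the construction in the middle paragraph, and within it the idea that matters is taking the radius $r_{x}$ to shrink geometrically in $Y(x)$: this forces any \emph{finite} subfamily — whose $Y$-indices are then pairwise distinct naturals — to have total length bounded by the tail of a convergent geometric series, which is what defeats the cover. Everything else is bookkeeping already carried out in the proofs of Theorem~\ref{crucru} and Corollary~\ref{Schor}. The one place to be slightly careful is verifying that $A$ is literally a set in the sense of Definition~\ref{openset} (which is why $(\exists^{2})$ is invoked) and that passing to a pairwise-distinct finite subfamily is harmless (finite combinatorics in $\RCA_{0}$).
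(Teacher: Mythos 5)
Your proposal is correct and takes essentially the same route as the paper: the paper likewise reduces everything to the implication $\HBC_{0}\di\NIN$ together with $\Z_{2}^{\omega}+\QFAC^{0,1}\nvdash\NIN$ (both cited from \cite{dagsamX}) and settles the non-implication claims by noting that $\HBC_{0}$ is vacuously true under $\ECF$. The only difference is that you prove $\HBC_{0}\di\NIN$ inline via the geometric-series covering argument instead of citing it, and that argument, as well as your $\ECF$-vacuity argument via the continuity reasoning from Theorem \ref{crucru}, is sound.
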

\begin{proof}
For the first part, it is shown in \cite{dagsamX} that $\HBC_{0}\di \NIN$ and $\Z_{2}^{\omega}+\QFAC^{0,1}\not\vdash \NIN$.
For the second part, note that $\RCAo+\HBC_{0}\vdash \WKL_{0}$ is converted to $\RCA_{0}\vdash \WKL_{0}$ as $\HBC_{0}$ is vacuously true under $\ECF$ following the proof of Corollary \ref{crucrucor}.
Indeed, $\ECF$ replaces all third-order objects by (continuous by definition) RM-codes, meaning that countable sets are interpreted as finite sets.
\end{proof}
We finish this section with a conceptual remark.  
\begin{rem}[Similar results]\rm
Since $\RCAo+\MUC$ is a conservative extension of $\WKL_{0}$, there is not much sense in proving a result like Corollary \ref{surplus} here.  
However, note that $\ACA_{0}$ follows from the statement that \emph{any separably closed set in $[0,1]$ has the Heine-Borel property} (see \cite{hirstrm2001} for details).  
Similar to the proof of Corollary \ref{surplus}, we have that regardless of the meaning of `$x\in A$', the system $\RCAo+\MUC$ proves that any separably closed set in $[0,1]$ has 
the Heine-Borel property \emph{formulated with countable collections of intervals} $A\subset \R^{2}$.  Hence, $\HBC_{0}$ generalised to separably closed sets does not imply $\ACA_{0}$.  
\end{rem}

\subsection{On theorems from the RM zoo}\label{krazy}
We study theorems from the RM zoo formulated using (strongly) countable set (of reals) as in Definition \ref{standard}.
We provide detailed results for $\ADS$ and sketch the results for $\CAC$ and $\RT_{2}^{2}$.  
We assume basic familiarity with the RM zoo and the aforementioned principles, although $\ADS$ is introduced below in Principle \ref{ADS}.  

\smallskip

In particular, we show that $\Z_{2}^{\omega}$ cannot prove the higher-order versions of $\ADS$, $\CAC$, and $\RT_{2}^{2}$ formulated using Definition \ref{standard}, while $\Z_{2}^{\Omega}$ of course can prove these higher-order versions, i.e.\ the Axiom of Choice is not needed.  
Similar to the previous, we have the splitting $\cocode_{0}\asa [\ADS_{0}+\CBN]$ over $\RCAo$, where $\ADS_{0}$ is $\ADS$ formulated using Definition \ref{standard} as in Principle \ref{rivals}.
The same holds for the higher-order versions of $\CAC$ and $\RT_{2}^{2}$ based on Definition \ref{standard}, i.e.\ the RM zoo is a lot more `tame' formulated in third-order arithmetic.

\smallskip

First of all, as to motivation, the word `countable' and variations appears about one hundred times in \cite{dsliceke}, Hirschfeldt's monograph that provides 
a partial overview of the RM zoo.   Countable infinity does indeed take centre stage, as is clear from Hirschfeldt's quote in Section \ref{detail}.
As it happens, this quote is preceded in \cite{dsliceke} by:
\begin{quote}
The work of G\"odel and others has shown that mathematics, like everything else, is built on sand. As Borges reminds us, this fact should not keep us from building, and building boldly. 
However, it also behooves us to understand the nature of our sand.
\end{quote}
While we do not agree with Hirschfeldt's foundational claims regarding G\"odel, we share his sentiment regarding the necessary nature of the study of the foundations of mathematics.  
Thus, it behooves us to study the logical strength of theorems from the RM zoo formulated using the `real' definition of countable set.  

\smallskip

The previous paragraph constitutes general motivation, but particular theorems come with `extra' motivation.  We single out fragments of \emph{Ramsey's theorem} 
as Ramsey himself in \cite{keihardrammen}, the original source of `Ramsey's theorem', formulates the infinite version of Ramsey's theorem using `infinite sets' and \textbf{not} using sequences.
Moreover, versions of the \emph{Rival-Sands theorem} from \cite{rivalsanders} are apparently equivalent to $\ADS$ and $\RT_{2}^{2}$ (see \cite{rivalsanders2, rivalsanders3}).  
The following quote by Rival-Sands strongly suggests their work is also formulated using `infinite (countable) sets' and \textbf{not} sequences. 
\begin{quote}
Recently, M.\ Gavalec and P.\ Vojtas have pointed out to us that the natural
analogue of our Theorem 1 holds for graphs of regular cardinality $\kappa$. (\cite{rivalsanders}*{p.\ 396})
\end{quote}
We could obtain a version of Theorem \ref{ladel} and corollaries for the various second-order versions of the Rival-Sands theorem.
On a related note, the topic of \cite{ericwaszonegoeie} is the (Weihrauch degree) study of $\ADS$ and variations involving (second-order) sets rather than sequences in the consequent.  
Thus, our idea of studying $\ADS$ based on Definition \ref{standard} is definitely \emph{in the same spirit}.  

\smallskip

Secondly, we now formulate the \emph{ascending-desending sequence principle} from \cite{dsliceke}*{Def.\ 9.1}, which is the following $\L_{2}$-sentence.
\begin{princ}[$\ADS$]\label{ADS}
Every infinite linear order has an infinite ascending or descending sequence.
\end{princ}
Countable linear orders are represented by subsets of $\N$ (see e.g.\ \cite{simpson2}*{V.1.1}) in RM, and we now study what happens if we adopt the definition of (strongly) `countable set' as in Definition \ref{standard}

\smallskip

Of course, we use the usual definition of `linear order' $(X, \preceq_{X})$; we shall assume that $X\subseteq \R$ or $X\subset \N^{\N}$, as this already guarantees that the associated
third-order version of $\ADS$ is not provable in $\Z_{2}^{\omega}$.
If the set $X$ is (strongly) countable, then we say that  $(X, \preceq_{X})$ is (strongly) countable.  An infinite ascending sequence $(x_{n})_{n\in \N}$ in $X$ satisfies $x_{n}\prec x_{n+1}$ for any $n\in\N$, and similar for descending sequences.
We say that $(X, \prec_{X})$ is infinite if for any $n\in \N$ there are pairwise distinct $x_{0}, \dots, x_{n} \in X$.  

\smallskip

With the above in mind, Definition \ref{standard} in particular, we make the following definition in $\RCAo$.
We note that $\ADS_{0}$ is provable in $\Z_{2}^{\Omega}$ by \cite{dagsamX}*{Figure 1}, where the latter system does 
not involve the Axiom of Choice.  
\begin{princ}[$\ADS_{0}$]\label{rivals}
Every countable and infinite linear order has an infinite ascending or descending sequence.
\end{princ}
Let $\ADS_{1}$ be $\ADS_{0}$ restricted to \emph{strongly} countable sets.  
Recall that $\IND_{\Sigma}$ is the induction axiom for $\Sigma$-formulas, i.e.\ of the form $\varphi(n)\equiv (\exists f\in \N^{\N})(Z(f, n)=0)$.

\smallskip

Thirdly, we obtain some results about $\ADS_{0}$, beginning with the following. 
\begin{thm}\label{ladel}
The system $\RCAo+\IND_{\Sigma}$ proves $\ADS_{0}\di \NBI$.
\end{thm}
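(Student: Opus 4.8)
The plan is to run the strategy of the proof of Theorem~\ref{kiro} --- from the failure of $\NBI$ build a sequence enumerating $[0,1]$ and quote Cantor's theorem (\cite{simpson2}*{II.4.9}) --- using $\IND_{1}$ only to guarantee that the linear order we feed into $\ADS_{0}$ is infinite. First I would dispose of the case $\neg(\exists^{2})$: then every $Y:[0,1]\di\N$ is continuous by \cite{kohlenbach2}*{\S3}, hence constant on the connected set $[0,1]$ (which has at least two points), so the first disjunct of $\NBI$ holds. Thus I may assume $(\exists^{2})$, hence $\ACA_{0}$, decidability of ``$x\in A$'' for sets as in Definition~\ref{openset}, and free conversion between reals in $[0,1]$ and binary expansions. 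Now fix $Y:[0,1]\di\N$; if $Y$ is not injective on $[0,1]$ the first disjunct of $\NBI$ holds, so it suffices to establish the second disjunct of $\NBI$ \emph{under the assumption that $Y$ is injective on $[0,1]$}. Suppose, towards a contradiction, that moreover $Y$ is onto $\N$, so that $[0,1]$ is strongly countable.

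Consider the set $B$ of all finite sequences $\langle z_{0},\dots,z_{k-1}\rangle$ of reals lying in $[0,1]$ with $Y(z_{i})=_{0}i$ for all $i<k$; given $(\exists^{2})$, membership in $B$ is decidable, so (coding finite sequences of reals as reals, or simply viewing $B\subset\N^{\N}$) $B$ is a legitimate set of the kind $\ADS_{0}$ applies to once we verify countability and infinitude. Since $Y$ is injective on $[0,1]$, $B$ contains at most one sequence of each length, so $b\mapsto|b|$ is injective on $B$, whence $(B,\preceq)$ with $b\preceq b'\equiv|b|\leq_{0}|b'|$ is a (strongly) countable linear order of type $\leq\omega$. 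Its infinitude is exactly where $\IND_{1}$ enters: the $\Sigma$-formula $\chi(n)\equiv(\exists s)\big[|s|=n\wedge s\text{ is a finite sequence of reals in }[0,1]\wedge(\forall i<n)(Y(s(i))=_{0}i)\big]$ is of the form $\IND_{1}$ governs, $\chi(0)$ is trivial, and $\chi(n)\di\chi(n+1)$ holds by appending the real of $Y$-value $n$ (which exists since $Y$ is onto and is unique since $Y$ is injective on $[0,1]$); so $\IND_{1}$ yields $(\forall n)\chi(n)$, i.e.\ $B$ has a member of every length.

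Now apply $\ADS_{0}$ to $(B,\preceq)$. An infinite descending sequence would be a strictly decreasing sequence of lengths --- a strictly decreasing sequence of natural numbers --- which is impossible in $\RCAo$; hence $\ADS_{0}$ delivers an infinite ascending sequence $(b^{j})_{j\in\N}$ with $|b^{0}|<|b^{1}|<\cdots$, so the lengths are unbounded. As each $b^{j}$ is the \emph{unique} element of $B$ of its length, the $b^{j}$ are pairwise coherent ($b^{j}$ is an initial segment of $b^{j'}$ whenever $j<j'$), so their union is a well-defined $W:\N\di[0,1]$ with $Y(W(m))=_{0}m$ for all $m$; since $Y$ is injective and onto on $[0,1]$, $W$ enumerates $[0,1]$. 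Cantor's theorem (\cite{simpson2}*{II.4.9}) then provides a real in $[0,1]$ distinct from every $W(m)$, contradicting this, exactly as in the proof of Theorem~\ref{kiro}. Therefore $Y$ is not onto, i.e.\ the second disjunct of $\NBI$ holds.

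The main thing to get right --- and the conceptual crux --- is the choice of $B$: feeding $[0,1]$ itself (ordered via $Y$) to $\ADS_{0}$ would only produce a \emph{sparse} ascending subsequence of $[0,1]$, which does not assemble into an enumeration, whereas ordering the initial segments of the putative inverse ``$Y^{-1}$'' by length forces the ascending sequence to be coherent, so that it unions to the full enumeration. Correspondingly, $\IND_{1}$ (used as $\Sigma$-induction with the functional $Y$ as a parameter, rather than the quantifier-free induction of $\RCAo$) is needed precisely to know that $B$ --- equivalently, that ``$Y^{-1}$'' --- is total, so that $\ADS_{0}$ is applicable at all.
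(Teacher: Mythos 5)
Your proposal is correct and follows essentially the same route as the paper: it builds the (strongly) countable linear order of finite initial segments of the putative inverse of $Y$ (sequences $w$ with $Y(w(i))=i$), ordered by length, uses $\IND_{1}$ exactly to show this order is infinite, rules out a descending sequence, and turns the ascending sequence into an enumeration of $[0,1]$ contradicted by \cite{simpson2}*{II.4.9}. The only differences are cosmetic: you argue contrapositively with explicit case distinctions ($\neg(\exists^{2})$, $Y$ not injective) and assemble the enumeration as the union of the coherent segments, where the paper assumes $\neg\NBI$ outright and reads off the enumeration as $(w_{n}(n))_{n\in\N}$.
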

\begin{proof}
Let $Y:[0,1]\di \N$ be a bijection, i.e.\ we have $\neg\NBI$.   Note that we have access to $(\exists^{2})$, which allows us to convert reals into binary representation.  
Define the set $X=\{w^{1^{*}}: (\forall i<|w|)(Y(w(i))=i \wedge w(i)\in [0,1] \}$, which is readily seen to be countable. 
Define the binary relation $\preceq_{X}$ on $X$ as follows: $w\preceq_{X} v $ if $|w|\leq |v|$.  
Trivially, this relation is transitive and connex.  Since $Y$ is an injection, this relation is also antisymmetric, and hence $(X, \preceq_{X})$ is a linear order.  
Since $Y$ is also a bijection, we have $(\forall n\in \N)(\exists w^{1^{*}})(|w|=n+1\wedge w\in X)$, which has an obvious proof using $\IND_{\Sigma}$.  
Hence, $(X, \preceq_{X})$ is an \emph{infinite} linear order.  Applying $\ADS_{0}$, there is an infinite ascending or descending sequence.
Now, this sequence cannot be descending and let $(w_{n})_{n\in \N}$ be such that $w_{n}\prec w_{n+1}$ for all $n\in \N$.  
By definition, we have $Y(w_{n}(n))=n$, i.e.\ the sequence $(w_{n}(n))_{n\in \N}$ lists all reals in $[0,1]$.  By \cite{simpson2}*{II.4.9}, there is $y\in [0,1]$ not in this sequence.  
But then for $n_{0}:= Y(y)$, we have $Y(w_{n_{0}}(n_{0}))=n_{0}=Y(y)$, a contradiction, and $\NBI$ follows. 
\end{proof}
\begin{cor}\label{prev1}
The system $\RCAo+\IND_{\Sigma}$ proves $\ADS_{0}\di \cocode_{1}$, while $\ADS_{0}$ does not imply $\ADS$ over $\RCAo$.
\end{cor}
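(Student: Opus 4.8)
The plan is to treat the two assertions separately. For $\RCAo+\IND_1\vdash\ADS_0\di\cocode_1$, the idea is to re-run the proof of Theorem~\ref{ladel} with $[0,1]$ replaced by an arbitrary non-empty strongly countable $A\subseteq[0,1]$, in exact analogy with how $\Korg_1\di\cocode_1$ was obtained from Theorem~\ref{kiro} in Corollary~\ref{wood}. For the claim that $\ADS_0$ does not imply $\ADS$ over $\RCAo$, the plan is the $\ECF$-mechanism of Corollaries~\ref{Schor} and~\ref{Schor3}: one shows that $\ADS_0$ is vacuously true under $\ECF$, so that $\RCAo+\ADS_0\vdash\ADS$ would yield $\RCA_0\vdash\ADS$, which is false.

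In more detail, for the first part I would first dispatch the case $\neg(\exists^2)$: by (the proof of) Theorem~\ref{crucru} there is then no non-empty countable set, hence no strongly countable set, so $\cocode_1$ holds vacuously; thus we may assume $(\exists^2)$. Given non-empty strongly countable $A\subseteq[0,1]$ with witnessing bijection $Y:\R\di\N$, put $X=\{w^{1^*}:(\forall i<|w|)(Y(w(i))=i\wedge w(i)\in A)\}$, viewed as a subset of $\N^\N$ via the usual coding of finite sequences of reals, so that $\ADS_0$ applies. Injectivity of $Y$ on $A$ makes $|\cdot|$ injective on $X$, so $X$ is countable, and $\preceq_X$, the ordering by length ($w\preceq_X v$ when $|w|\leq|v|$), is readily a linear order on $X$. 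As in Theorem~\ref{ladel}, surjectivity of $Y$ on $A$ together with $\IND_1$ gives $(\forall n)(\exists w\in X)(|w|=n+1)$, so $(X,\preceq_X)$ is infinite; $\ADS_0$ then yields an infinite ascending or descending sequence, and the descending case is impossible since the lengths would form a strictly decreasing sequence of naturals. From an ascending $(w_n)_n$ the lengths $|w_n|$ are unbounded, so $x_k:=w_n(k)$ for the least $n$ with $|w_n|>k$ is well defined (any two members of $X$ of length $>k$ agree at position $k$); then $(x_k)_k$ lies in $A$ with $Y(x_k)=k$ for every $k$, and since $Y$ is both injective and surjective on $A$ this sequence enumerates $A$, which is $\cocode_1$.

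For the second part I would argue by contradiction. If $\RCAo+\ADS_0\vdash\ADS$, i.e.\ $\RCAo\vdash\ADS_0\di\ADS$, then applying $\ECF$ (Remark~\ref{ECF}) gives $\RCA_0\vdash[\ADS_0]_{\ECF}\di[\ADS]_{\ECF}$, and $[\ADS]_{\ECF}$ is just $\ADS$ since the latter is an $\L_2$-sentence. It thus suffices to prove $\RCA_0\vdash[\ADS_0]_{\ECF}$, i.e.\ that $\ADS_0$ becomes vacuously true once all type-two objects are (coded) continuous functionals. By (the proof of) Theorem~\ref{crucru} a countable $X\subseteq\R$ is then empty, and a countable $X\subset\N^\N$ is empty by the same local argument: if some $x_0\in X$, then continuity of the characteristic function of $X$ at $x_0$ places a basic open neighbourhood of $x_0$ inside $X$, while continuity of the witnessing injection at $x_0$ makes it constant on a smaller, still infinite, such neighbourhood, contradicting injectivity. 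Hence no countable infinite linear order exists and $\ADS_0$ holds trivially, so $\RCA_0\vdash\ADS$, contradicting the fact that $\ADS$ is not provable in $\RCA_0$ (\cite{dsliceke}). The same argument in fact shows that $\ADS_0$ proves over $\RCAo$ no $\L_2$-sentence that is unprovable in $\RCA_0$.

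I expect the bookkeeping in the first part --- injectivity of $|\cdot|$ on $X$, unboundedness of the $|w_n|$, and well-definedness of the $x_k$ --- to be entirely routine, with $\IND_1$ needed for exactly the reason it is needed in Theorem~\ref{ladel}: the infinitude statement for $X$ is a $\Sigma$-formula carrying the type-two parameter $Y$, hence beyond ordinary arithmetical induction. The only point that warrants real care is the $\ECF$-step of the second part: one must make sure $\ECF$ trivialises \emph{every} instance of $\ADS_0$, in particular those whose index set lies in $\N^\N$ rather than in $\R$, which is why I would spell out the local continuity argument for both ambient spaces instead of merely citing Theorem~\ref{crucru}.
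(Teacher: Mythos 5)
Your proposal is correct and follows essentially the same route as the paper: the first part by rerunning the proof of Theorem~\ref{ladel} with $[0,1]$ replaced by a strongly countable set (now extracting the enumeration for $\cocode_{1}$ instead of a contradiction), and the second part by noting that $\ADS_{0}$ is vacuously true under $\ECF$ while $\ADS$ is unprovable in $\RCA_{0}$. The extra bookkeeping you supply (well-definedness of the $x_{k}$, the continuity argument for index sets in $\N^{\N}$) is sound and merely makes explicit what the paper leaves implicit.
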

\begin{proof}
For the first part, repeat the proof of the theorem with $[0,1]$ replaced by a strongly countable set $A\subset \R$.  
For the second part, $\ADS_{0}$ is vacuously true under $\ECF$, i.e.\ applying the latter to $\RCAo+\ADS_{0}\vdash \ADS$ leads to a contradiction. 
\end{proof}
We do not know how to prove $\ADS_{1}\di \cocode_{1}$ or $\ADS_{0}\di \cocode_{0}$.  
We do have the following corollary.
\begin{cor}\label{flor}
The system $\RCAo+\IND_{\Sigma}$ proves $\cocode_{0}\asa [\ADS_{0}+\CBN]$.
The system $\RCAo+\IND_{\Sigma}+\NCC$ proves $\cocode_{0}\asa [\ADS_{0}+\CBN'+\fin_{0}]$.
\end{cor}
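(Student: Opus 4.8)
The plan is to follow the template of Corollary~\ref{wood} and reduce both equivalences to facts already established. As in the proofs of Theorems~\ref{crucru} and~\ref{angel}, one first notes that all of $\cocode_0$, $\ADS_0$, $\CBN$, $\CBN'$, and $\fin_0$ are vacuously true under $\neg(\exists^2)$: in that case every relevant function is continuous, a countable set is finite, and in particular there is no countable \emph{infinite} linear order, so $\ADS_0$ holds trivially. Hence it suffices to prove the two equivalences over $\ACAo+\IND_1$ and over $\ACAo+\IND_1+\NCC$ respectively, the $\RCAo$-versions then following by the instance $(\exists^2)\vee\neg(\exists^2)$ of excluded middle. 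From here on we assume $(\exists^2)$, hence $\ACA_0$ and the attendant RM; in particular we may freely convert between reals in $[0,1]$, their binary expansions, and elements of $2^\N$ (and of $\N^\N$) via $\exists^2$-coding, and elementhood in sets as in Definition~\ref{openset} is decidable.

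First I would treat the forward direction $\cocode_0\di[\ADS_0+\CBN]$. The implication $\cocode_0\di\CBN$ is exactly the last line of the proof of Theorem~\ref{angel}, obtained by using $\exists^2$ to trim duplicate reals from the sequence furnished by $\cocode_0$. For $\cocode_0\di\ADS_0$, let $(X,\preceq_X)$ be a countable infinite linear order with $X\subseteq\R$ or $X\subseteq\N^\N$; by $\exists^2$-coding we may view $X$ as a countable subset of $2^\N$ and apply $\cocode_0$ to obtain a sequence enumerating $X$. Using $\exists^2$ we trim repetitions; since $(X,\preceq_X)$ is infinite, this leaves an infinite sequence $(y_n)_{n\in\N}$ of pairwise distinct elements of $X$, and setting $n\preceq m$ iff $y_n\preceq_X y_m$ defines an infinite second-order linear order on $\N$. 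The second-order $\ADS$ is provable in $\ACA_0$ (it follows from $\RT^{2}_{2}$), so it yields an infinite ascending or descending sub-sequence in $\N$, which pulls back along $n\mapsto y_n$ to an infinite ascending or descending sequence in $X$. Thus $\ADS_0$ holds.

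Next I would treat the reverse direction $\ADS_0+\CBN\di\cocode_0$; this is the only place $\IND_1$ is needed. By Corollary~\ref{prev1}, $\RCAo+\IND_1$ proves $\ADS_0\di\cocode_1$, so $\ADS_0+\CBN$ yields $\cocode_1+\CBN$, which is item~\eqref{ob5} of Theorem~\ref{angel}; hence it implies item~\eqref{ob4}, i.e.\ $\cocode_0$. This establishes $\cocode_0\asa[\ADS_0+\CBN]$ over $\RCAo+\IND_1$. The second equivalence is then immediate: by Corollary~\ref{useff}, $\RCAo+\NCC$ proves $\CBN\asa[\CBN'+\fin_0]$, and substituting this into the first equivalence gives $\cocode_0\asa[\ADS_0+\CBN'+\fin_0]$ over $\RCAo+\IND_1+\NCC$.

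The only step that is more than bookkeeping is $\cocode_0\di\ADS_0$, and even there the obstacle is mild: one must handle sets $X\subseteq\N^\N$ (not merely $X\subseteq[0,1]$) through $\exists^2$-coding into $2^\N$, extract from the possibly repetitive enumeration given by $\cocode_0$ an \emph{infinite} sub-sequence of pairwise distinct elements (which is precisely where the hypothesis that $(X,\preceq_X)$ is infinite enters), and invoke the provability of second-order $\ADS$ in $\ACA_0$. Everything else is a direct appeal to Theorem~\ref{angel} and Corollaries~\ref{prev1} and~\ref{useff}.
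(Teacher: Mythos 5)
Your proof is correct and follows essentially the same route as the paper: the excluded-middle reduction to the case $(\exists^{2})$, obtaining $\ADS_{0}$ and $\CBN$ from $\cocode_{0}$ via enumeration/trimming and $\ACA_{0}\di\ADS$, the reverse direction through Corollary \ref{prev1} and Theorem \ref{angel}, and the second equivalence via Corollary \ref{useff}. The paper's own proof is just a terse citation of these same ingredients, which you have spelled out in full.
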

\begin{proof}
The first part follows from Corollary \ref{prev1} and (the proof of) Corollary \ref{wood}.  Note in particular that $(\exists^{2})\di \ACA_{0}\di \ADS$.
The second part follows from Corollary \ref{useff}.
\end{proof}
For the following corollary, a linear order $(X, \preceq_{X})$ is \emph{stable} if every element either has only finitely many $\prec_{X}$-
predecessors or has only finitely many $\prec_{X}$-successors.  The linear order from Theorem \ref{ladel} is clearly stable.  
\begin{cor}
Theorem \ref{ladel} and corollaries also hold for $\textup{\textsf{SADS}}_{0}$, i.e.\ $\ADS_{0}$ restricted to stable linear orders. 
\end{cor}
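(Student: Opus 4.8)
The plan is to observe that every application of $\ADS_{0}$ in the proofs of Theorem~\ref{ladel} and its corollaries is in fact an application to a \emph{stable} linear order, so that the weaker principle $\textup{\textsf{SADS}}_{0}$ suffices verbatim throughout and nothing new must be proved.

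First I would revisit the linear order $(X,\preceq_{X})$ constructed in the proof of Theorem~\ref{ladel}, where $X=\{w^{1^{*}}:(\forall i<|w|)(Y(w(i))=i\wedge w(i)\in[0,1])\}$ is ordered by $w\preceq_{X}v\leftrightarrow |w|\leq|v|$. Since $Y$ is injective on $[0,1]$, there is at most one element of $X$ of each length, so an element $w\in X$ with $|w|=n$ has precisely the (finitely many) elements of $X$ of length $<n$ strictly below it. In particular every element of $(X,\preceq_{X})$ has only finitely many $\prec_{X}$-predecessors, hence $(X,\preceq_{X})$ is stable and $\textup{\textsf{SADS}}_{0}$ already applies to it; this gives $\RCAo+\IND_{1}\vdash\textup{\textsf{SADS}}_{0}\di\NBI$. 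The same remark applies to the linear order obtained in the proof of Corollary~\ref{prev1} upon replacing $[0,1]$ by a strongly countable $A\subset\R$ — it again has at most one element of each length — so $\RCAo+\IND_{1}\vdash\textup{\textsf{SADS}}_{0}\di\cocode_{1}$. Moreover $\textup{\textsf{SADS}}_{0}$, being a restriction of $\ADS_{0}$, is still vacuously true under $\ECF$ (there are no countable infinite linear orders in that interpretation), so the $\ECF$-argument of Corollary~\ref{prev1} still shows that $\textup{\textsf{SADS}}_{0}$ does not imply $\ADS$ over $\RCAo$.

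It remains only to push this through Corollary~\ref{flor}: the splitting $\cocode_{0}\asa[\textup{\textsf{SADS}}_{0}+\CBN]$ over $\RCAo+\IND_{1}$ follows from the implication $\textup{\textsf{SADS}}_{0}\di\cocode_{1}$ just established together with (the proof of) Corollary~\ref{wood} — note that $\cocode_{0}\di\ADS_{0}\di\textup{\textsf{SADS}}_{0}$, that $\cocode_{0}\di\CBN$ by trimming, and that $\CBN+\cocode_{1}\di\cocode_{0}$ by Theorem~\ref{angel} — exactly as Corollary~\ref{flor} itself is derived; and the variant $\cocode_{0}\asa[\textup{\textsf{SADS}}_{0}+\CBN'+\fin_{0}]$ over $\RCAo+\IND_{1}+\NCC$ then follows from Corollary~\ref{useff} in the same way. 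There is essentially no obstacle beyond this bookkeeping; the one point to check with care is that the order in each relevant construction really is stable (which is forced by the ``at most one element per length'' structure coming from injectivity of $Y$) and that no step in the chain — Theorem~\ref{ladel}, Corollary~\ref{prev1}, Corollary~\ref{flor}, or the appeal to Corollary~\ref{useff} — ever applies $\ADS_{0}$ to a linear order other than one of these two.
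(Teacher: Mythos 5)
Your proposal is correct and matches the paper's (essentially implicit) argument: the paper justifies the corollary solely by the remark that the linear order constructed in the proof of Theorem \ref{ladel} is stable, which is exactly your key observation that injectivity of $Y$ forces at most one element per length and hence finitely many $\prec_{X}$-predecessors, after which the corollaries go through verbatim with $\textup{\textsf{SADS}}_{0}$ in place of $\ADS_{0}$.
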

The following corollary should be contrasted to the fact that $\WKL_{0}\not\vdash \ADS$ (\cite{dsliceke}). 
Recall that $\RCAo+\MUC$ is a conservative extension of $\WKL_{0}$ (see \cite{kohlenbach2}*{\S3}).
\begin{cor}\label{surplus2}
Regardless of the meaning of `$x\in A$', $\RCAo+\MUC$ proves that a countable infinite linear order in $[0,1]$ has an infinite ascending or descending sequence.
\end{cor}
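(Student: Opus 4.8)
The plan is to imitate the proof of Corollary \ref{surplus}: under $\MUC$ the statement is vacuously true, because $[0,1]$ cannot carry a strongly countable structure. First I would recall that $\RCAo+\MUC$ proves both $\neg(\exists^{2})$ and $\WKL$ (the former since the intuitionistic fan functional supplies a uniform modulus of continuity to every $Y^{2}$, which is incompatible with a discontinuous $\exists^{2}$; the latter as in \cite{simpson2}*{IV.2}, cf.\ \cite{kohlenbach2}*{\S3}). Consequently, by \cite{kohlenbach2}*{\S3}, every function $[0,1]\di\N$ is continuous.

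Next, suppose for contradiction that $(X,\preceq_{X})$ is a strongly countable linear order with $X\subseteq[0,1]$, witnessed by some $Y:[0,1]\di\N$ that is bijective on $X$ (in particular injective on $X$). As in the proof of Corollary \ref{crucrucor}, continuity of $Y$ together with $\WKL$ yields an RM-code for $Y$ via (the proof of) \cite{kohlenbach4}*{Prop.\ 4.10}, and the RM of $\WKL$ from \cite{simpson2}*{IV} then makes $Y$ \emph{uniformly} continuous on $[0,1]$. Fix $N_{0}\in\N$ with $|Y(x)-Y(y)|<1$ whenever $|x-y|<\frac{1}{2^{N_{0}}}$; then $Y$ is constant on every ball of radius $\frac{1}{2^{N_{0}+1}}$, so—since $Y$ is injective on $X$—each such ball meets $X$ in at most one point. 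As $2^{N_{0}-2}$ such balls cover $[0,1]$, the set $X$ has at most $2^{N_{0}-2}$ elements, whatever the meaning of `$x\in X$' may be.

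But strong countability of $X$ means $Y$ is a surjection onto $\N$, so $X$ is infinite, contradicting the previous paragraph. Hence under $\MUC$ there is no strongly countable (infinite) linear order in $[0,1]$, and the displayed statement holds vacuously. I do not expect a genuine obstacle here: the result is a meta-theorem that rides entirely on the uniform-continuity argument of Corollary \ref{crucrucor}, and the only point requiring a little care is that the cardinality bound on $X$ depends solely on the modulus of $Y$, not on the unspecified interpretation of membership, so the `regardless of the meaning of $x\in A$' clause is automatic.
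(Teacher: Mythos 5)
Your proposal is correct and follows essentially the same route as the paper: Corollary \ref{surplus2} is proved there simply ``as in the proof of Corollary \ref{surplus}'', i.e.\ $\MUC$ yields $\neg(\exists^{2})$ and $\WKL$, the uniform-continuity argument of Corollary \ref{crucrucor} forces any set admitting an injection to $\N$ to be finite, and strong countability then fails, so the statement holds vacuously. Your added remark that the cardinality bound depends only on the modulus of $Y$ and not on the interpretation of membership is exactly the point behind the ``regardless of the meaning of $x\in A$'' clause.
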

\begin{proof}
As in the proof of Corollary \ref{surplus}.
\end{proof}
Fourth, we discuss similar results for related theorems from the RM-zoo.  Such results hold for e.g.\ the \emph{chain-antichain principle} $\CAC$ which expresses that \emph{every infinite partial order has an infinite chain or antichain} in $\L_{2}$ (\cite{dsliceke}*{Def.~9.2}).
Indeed, $(X, \preceq_{X})$ from the proof of Theorem \ref{ladel} is a countable (in the sense of Definition~\ref{standard}) and infinite partial order without infinite antichains.  Moreover, $\RT_{2}^{2}$ implies $\CAC$ via an elementary argument (\cite{dsliceke}*{p.\ 144})
that carries over to the third-order versions.  We repeat that Ramsey formulates the infinite version of Ramsey's theorem  in \cite{keihardrammen}*{p.\ 264} using `infinite sets' (Ramsey uses `classes' rather than `sets').  
Hence, at the very least, we should study Ramsey's theorem formulated using (strongly) countable sets rather than sequences.

\smallskip

In conclusion, $\RT_{2}^{2}$ and $\CAC$ formulated with the definition of `countable set' as in Definition \ref{standard}, is not provable in $\Z_{2}^{\omega}$.  Nonetheless, since they are both provable in $\ACA_{0}$, 
$\CAC$ and $\RT_{2}^{2}$ yield splittings as in Corollary \ref{flor}, i.e.\ the RM zoo is easily tamed by introducing Definition \ref{standard}.


\subsection{Countable sets in mathematics and logic}\label{bauer}
As noted in Section \ref{detail}, we do not claim that the definition of countable sets via injections/bijections to $\N$ constitutes the `standard' or `mainstream' one.  
We have studied these notions in higher-order RM since they yield interesting results.  In this section, we discuss some related results in the grand(er) scheme of things, 
as well as an argument for the study of Kunen's definition of countable set based on injections to $\N$.  We believe the results in this section to provide some context for the results in this paper. 

\smallskip

First of all, we list textbooks in which `countable sets' are defined via sequences.  
\begin{itemize}
\item The textbook \emph{Introductory Real Analysis} by Kolmogorov and Fomin (\cite{kollen}).
\item The textbook \emph{Calculus} by Spivak (\cite{spiva}).  
\item Bishop's textbook on \emph{Constructive Analysis} (\cite{bish1}).
\end{itemize}
In particular, the definition of countable set base via sequences appears to be the usual definition in the setting of elementary calculus and real analysis where the general notion of `cardinality' is not needed or developed.

\smallskip

Secondly, it is well-known that `disasters' can happen in the absence of the Axiom of Choice ($\AC$ for short), by which it is meant that \emph{many beautiful theorems are no longer provable in the absence of $\AC$}, as discussed in  \cite{heerlijkheid}*{Preface}.  We point out that such disasters already happen for rather `mundane' topics like finite or countable sets, like e.g.\ the fact that $\R$ is not the countable union of countable sets, or basic cardinal arithmetic (see \cite{heerlijkheid}*{\S4}).  Nonetheless, the principles studied in this paper, especially $\cocode_{i}$ for $i=0,1$, are all provable in $\Z_{2}^{\Omega}$ and weaker systems, i.e.\ without $\AC$.  As it happens, the author and Dag Normann study the relationship between the \emph{countable union theorem} and $\cocode_{i}$ for $i=0,1$ in \cite{dagsamXI}*{\S3.2}.  

\smallskip 

Thirdly, \emph{constructive mathematics} is usually qualified as mathematics based on intuitionistic logic with some appropriate extra `semi-constructive' axioms (see e.g. \cite{brich}).  
For instance, Bishop's aforementioned \emph{constructive analysis} additionally assumes the axiom of countable choice (and other axioms).  
The field \emph{constructive RM} seeks to develop RM over a constructive base theory (see e.g.\ \cite{ishi1, hadie}).  A result relevant to this paper may be found
in \cite{bauer1}, which essentially shows that $\NIN$ can be false in certain approaches to constructive mathematics.   
Another related result is in \cite{cblem}, showing that the Cantor-Bernstein theorem implies the law of excluded middle. 
A well-known aspect of constructive mathematics (which also shows up in classical RM) is
that classically equivalent notions are no longer equivalent in a constructive setting.  A relevant example pertaining to countability is \emph{subcountability}, but we will content ourselves with pointing the interested reader to e.g.\ \cite{raco,macol, scow1} and the references therein.  

\smallskip

Fourth, we provide an argument for the study of Kunen's definition of countable set based on injections to $\N$.  In \cite{dagsamX}, Dag Normann and the author study the RM of the principles $\NIN$ and $\NBI$, also introduced in Section \ref{bintro}.  We identify a number of third-order principles that do not mention the notion `countable set' (based on bijections, injections, or enumerations) explicitly, yet all imply $\NIN$.  
In fact, it is quite hard to find a natural principle that does not mention `countable set' explicitly, implies $\NBI$, and does not imply $\NIN$; a somewhat natural example can be found in \cite{samNEO2}.
We list two examples of natural third-order theorems of ordinary mathematics that do not mention `countable set' in any way, but do imply $\NIN$.  
\begin{itemize}
\item Arzel\`a's convergence theorem \emph{for the Riemann integral} (1885; \cite{arse2}).
\item Jordan's decomposition theorem (1881; \cite{jordel}).  
\end{itemize}
The proof that the first item implies $\NIN$ may be found in  \cite{dagsamX}*{\S3.1.2}.
That the second item implies $\NIN$ has not been published yet.  

\begin{ack}\rm
I thank Anil Nerode and Dag Normann for their helpful suggestions.
My research was supported by the John Templeton Foundation via the grant \emph{a new dawn of intuitionism} with ID 60842 and by the \emph{Deutsche Forschungsgemeinschaft} via the DFG grant SA3418/1-1.  Opinions expressed in this paper do not necessarily reflect those of the John Templeton Foundation.  
The results in Section~\ref{krazy} go back to the stimulating BIRS workshop (19w5111) on Reverse Mathematics at CMO, Oaxaca, Mexico in Sept.\ 2019.  
I thank the anonymous referees for their many suggestions that have greatly improved this paper, esp.\ Section \ref{bauer}.  
\end{ack}

\begin{bibdiv}
\begin{biblist}
\bibselect{allkeida}
\end{biblist}
\end{bibdiv}

\bye